\newcommand{\FF}{{\cal F}}
\newtheorem{theorem}{Theorem}
\newtheorem{corollary}[theorem]{Corollary}
\newtheorem{lemma}[theorem]{Lemma}
\newcommand{\claim}[2]{\begin{equation}\mbox{\parbox{\linewidth}{{\em #2}}}\label{#1}\end{equation}}
\title{Choosability of planar graphs of girth $5$}
\author{Zden\v{e}k Dvo\v{r}\'ak\thanks{Charles University, Prague, Czech Republic. E-mail: {\tt rakdver@kam.mff.cuni.cz}.
Supported by Institute for Theoretical Computer Science, project 1M0545 of Ministry of~Education of~Czech Republic.}\and
Ken-ichi Kawarabayashi\thanks{National Institute of Informatics, Tokyo, Japan. E-mail: {\tt k\_keniti@nii.ac.jp}.}}
\date{}
\begin{document}
\maketitle

\begin{abstract}
Thomassen proved that any plane graph of girth $5$ is list-colorable
from any list assignment such that all vertices have lists of size two or three
and the vertices with list of size two are all incident with the outer face
and form an independent set.  We present a strengthening of this result,
relaxing the constraint on the vertices with list of size two.
This result is used to bound the size of the 3-list-coloring critical plane graphs
with one precolored face.  
\end{abstract}

\section{Introduction}
All graphs considered in this paper are simple and finite. The concepts 
of list coloring and choosability were introduced by Vizing~\cite{vizing1976} and independently by 
Erd\H{o}s et al.~\cite{erdosrubintaylor1979}:
A {\em list assignment} of $G$ is a function $L$ that assigns to each vertex 
$v \in V(G)$ a list $L(v)$ of colors. An {\em $L$-coloring} is a function 
$\varphi: V(G) \rightarrow \bigcup_v L(v)$ such 
that $\varphi(v) \in L(v)$ for every $v \in V(G)$ and
$\varphi(u) \neq \varphi(v)$ whenever $u, v$ are adjacent vertices 
of $G$. If $G$ admits an $L$-coloring, then it is {\em $L$-colorable}. 
A graph $G$ is {\em $k$-choosable} if it is $L$-colorable for
for every list assignment $L$ such that $|L(v)| \ge k$ 
for all $v \in V(G)$.

A well-known result of Gr\"otzsch~\cite{grotzsch} states that any triangle-free planar graph is $3$-colorable.
Since the cycles of length $4$ can be easily eliminated, the main part of the proof of Gr\"otzsch's theorem
concerns graphs of girth $5$.  Generalizing this result, Thomassen~\cite{thomassen1995-34} proved
that every planar graph of girth at least $5$ is $3$-choosable.  In fact, he proved the following stronger claim:

\begin{theorem}\label{thm-thom}
Let $G$ be a plane graph of girth at least $5$ and $F$ a
face of $G$.  Let $P$ be a path in $G$ of length at most $5$,
such that $V(P)\subseteq V(F)$.  Let $L$ be an
assignment of lists to the vertices of $G$
such that
$|L(v)|=3$ for $v\in V(G)\setminus V(F)$,
$|L(v)|\ge 2$ for $v\in V(F)\setminus V(P)$,
$|L(v)|=1$ for $v\in V(P)$,
the lists of vertices of $P$ give a proper coloring of the
subgraph induced by $V(P)$, and
a vertex $v$ with $|L(v)|=2$ is not adjacent
to any vertex $u$ with $|L(u)|\le 2$.
Then, $G$ can be $L$-colored.
\end{theorem}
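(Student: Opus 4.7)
The plan is to prove the theorem by induction on $|V(G)|$, following Thomassen's strategy. Let $(G,F,P,L)$ be a minimum counterexample. First, standard reductions show that $G$ may be assumed $2$-connected with outer face bounded by a chord-free cycle $C$: a cut vertex or chord would split $G$ into two smaller pieces, and one could color one piece by induction and transfer the resulting coloring of the shared vertices to a precolored path for the other piece. The length bound $|V(P)| \le 6$ plays a role in the chord case, where the new precolored path is formed by combining a subpath of $P$ with the chord endpoints, and one must verify that it still has length at most $5$. The case $V(P) = \emptyset$ is handled by reducing the list of any vertex $v \in V(C)$ to a singleton and re-applying the theorem with $P = v$.

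Next, let $p$ be an endpoint of $P$ and $v \in V(C) \setminus V(P)$ its neighbor on $C$. The non-adjacency constraint on $2$-list vertices, together with $|L(p)|=1$, forces $|L(v)| = 3$. The main reduction is to color $v$ with a carefully chosen $c \in L(v) \setminus L(p)$, delete $v$, and apply induction to $G - v$ with the former interior neighbors $u_1, \dots, u_m$ of $v$ receiving reduced lists $L(u_i) \setminus \{c\}$ of size $2$. The new outer face $F'$ consists of the former $C$ with $v$ replaced by the path through $u_1,\dots,u_m$, and $P$ lies unchanged on $F'$.

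For induction to apply to $(G-v,\,F',\,P,\,L')$ we must verify that the new $2$-list vertices $u_i$ form an independent set (which follows from girth $5$, since an edge $u_iu_j$ would form a triangle with $v$) and are not adjacent to any existing $1$- or $2$-list vertex of $F'$. Adjacencies creating $4$-cycles through $v$ are forbidden by girth $5$, but $5$-cycles are not, so some $u_i$ could a priori be adjacent to a $2$-list vertex $w$ on $C$ at distance $3$ from $v$ along $C$. The main obstacle is handling such configurations: either the color $c$ is chosen so that $L(u_i)\setminus\{c\}$ and $L(w)$ remain compatible, or a small reducible subconfiguration near $v$ is identified on which the coloring can be extended directly. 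This case analysis on short cycles through $v$, together with the bookkeeping for the length-$5$ bound on $P$ in the chord-splitting reductions, is the technical heart of the argument.
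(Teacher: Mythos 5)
The paper does not prove Theorem~\ref{thm-thom}; it quotes it directly from Thomassen~\cite{thomassen1995-34} (in the slightly stronger form of \cite{thom-surf}) and uses it as a black box. So there is no ``paper's own proof'' to compare against, and your sketch is best read as an attempted reconstruction of Thomassen's argument.

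As such, the outline is in the right spirit --- minimal counterexample, reduction to a $2$-connected graph with chord-free outer cycle, then an induction step that colors a vertex near an endpoint of $P$ and deletes it --- but the place where you write ``this case analysis\ldots is the technical heart of the argument'' is, quite literally, the entire proof, and what you propose to do there does not obviously close. Two concrete problems. First, you never address the other neighbor $v'$ of $v$ on $C$: if $|L(v')|=2$ and $c\in L(v')$, deleting $v$ after assigning it $c$ leaves $v'$ with an effective list of size one, and $v'$ cannot be absorbed into $P$ (it is separated from $P$ by the new boundary vertices $u_1,\dots,u_m$); if you keep $v'$'s list intact, an inductive coloring of $G-v$ may assign $v'$ the color $c$ and fail to extend. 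Second, the phrase ``either the color $c$ is chosen so that $L(u_i)\setminus\{c\}$ and $L(w)$ remain compatible, or a small reducible subconfiguration near $v$ is identified'' is not a proof step: the hypothesis to be restored is an adjacency constraint (no two vertices with lists of size $\le 2$ adjacent), not a list-compatibility condition, and ``a small reducible subconfiguration'' needs to actually be exhibited and reduced. In Thomassen's paper this is several pages of careful structural analysis (including cases where one removes a path or a larger piece, not a single vertex, and where the length-$5$ cap on $P$ is exactly what makes the chord-splitting numbers work out). Until those cases are written down and verified, the proposal is a plausible plan, not a proof.
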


Voigt~\cite{voigt1995} found a triangle-free planar graph that is not $3$-choosable,
thus the restriction on the girth of the graph in Theorem~\ref{thm-thom} cannot be
relaxed without imposing further constraints on $4$-cycles.  Such a generalizations exist,
e.g., Dvo\v{r}\'ak et al.~\cite{fc367} proved that Theorem~\ref{thm-thom} holds for
triangle-free graphs as long as no $4$-cycle shares an edge with a cycle of length at most $5$.

In this paper, we study another approach to strenghtening Theorem~\ref{thm-thom}, by weakening
the restrictions on the vertices with lists of size $2$.  The following
claim is an easy consequence of Theorem~\ref{thm-thom} (see e.g. Thomassen~\cite{thom-surf},
where a slightly stronger version allowing a precolored
path of length at most $5$ is derived):

\begin{corollary}\label{cor-thm}
Let $G$ be a plane graph of girth at least $5$ and $F$ a
face of $G$.  Let $L$ be an
assignment of lists to the vertices of $G$
such that
$|L(v)|=3$ for $v\in V(G)\setminus V(F)$, $|L(v)|\ge 2$ for $v\in V(F)$, and
\begin{itemize}
\item $G$ does not contain a path $v_1v_2v_3$
with $|L(v_1)|=|L(v_2)|=|L(v_3)|=2$,
\item $G$ does not contain a path $v_1v_2v_3v_4$
with $|L(v_1)|=|L(v_2)|=|L(v_4)|=2$, and
\item $G$ does not contain a path $v_1v_2v_3v_4v_5v_6$
with $|L(v_1)|=|L(v_2)|=|L(v_5)|=|L(v_6)|=2$.
\end{itemize}
Then, $G$ can be $L$-colored.
\end{corollary}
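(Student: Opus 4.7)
The plan is to deduce Corollary~\ref{cor-thm} from Theorem~\ref{thm-thom} via a short structural case analysis on the size-$2$ vertices of $V(F)$, supplemented by induction on $|V(G)|$ in the difficult subcase. Condition~(1) immediately forces the subgraph of $G$ induced by $\{v:|L(v)|=2\}$ to have maximum degree $1$, so the size-$2$ vertices split into disjoint \emph{dominos} (pairs of adjacent size-$2$ vertices) and isolated size-$2$ vertices.

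If $V(F)$ contains no domino, I would pick a size-$2$ vertex $w\in V(F)$---or any vertex of $V(F)$ if none is size-$2$---shrink $L(w)$ to one of its colours, and apply Theorem~\ref{thm-thom} with $P=\{w\}$; the remaining size-$2$ vertices are pairwise non-adjacent and also non-adjacent to the size-$1$ vertex $w$. If $V(F)$ contains exactly one domino $u_1u_2$, I would pick any $c_1\in L(u_1)$ and any $c_2\in L(u_2)\setminus\{c_1\}$ (possible since $|L(u_2)|=2$), set $L(u_1):=\{c_1\}$ and $L(u_2):=\{c_2\}$, and apply Theorem~\ref{thm-thom} with $P=u_1u_2$; condition~(1) prevents any other size-$2$ vertex from being adjacent to $u_1$ or $u_2$, and uniqueness of the domino prevents any two remaining size-$2$ vertices from being adjacent.

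The main obstacle is the case of two or more dominos in $V(F)$. Conditions~(2) and~(3) imply that any two distinct dominos must lie at pairwise graph-distance at least $4$ in $G$: a path of length $2$ or $3$ between their inner endpoints, concatenated with the two domino edges, would realise precisely one of the forbidden configurations in condition~(2) or~(3). Since the inner endpoints of two dominos are therefore at distance $\ge 4$ in $G$, any path in $G$ on $V(F)$ covering both has length $\ge 6>5$, and a single application of Theorem~\ref{thm-thom} with a precoloured path cannot absorb both dominos simultaneously.

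To resolve this, I would induct on $|V(G)|$: pick a domino $u_1u_2\subseteq V(F)$ together with a colour $c_1\in L(u_1)\setminus L(u_2)$ if such exists (and otherwise precolour the whole of $u_1u_2$ and delete both vertices), delete $u_1$ from $G$, and remove $c_1$ from the list of every surviving neighbour of $u_1$. The resulting plane graph $G'$ has girth $\ge 5$; its new outer face $F'$ contains $V(F)\setminus\{u_1\}$ together with the interior neighbours of $u_1$ (whose list-size requirement relaxes from exactly $3$ to at least $2$ as they join $F'$); and every forbidden path in $G'$ lifts to a forbidden path in $G$ by reinserting $u_1$. The technical core of the argument is verifying that the reduced list assignment still satisfies conditions~(1)--(3) on $G'$: using the girth constraint (which forbids two neighbours of $u_1$ from being adjacent) and condition~(2) applied in $G$ to paths of the form $u_2u_1wx$, one shows that no new domino or forbidden triple arises on $F'$, after which the inductive hypothesis yields an $L'$-colouring of $G'$ that extends to $G$ by setting $\varphi(u_1)=c_1$.
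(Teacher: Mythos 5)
The paper does not actually prove Corollary~\ref{cor-thm}: it is presented as an easy consequence of Theorem~\ref{thm-thom} with a pointer to Thomassen~\cite{thom-surf}, so there is no in-paper argument to compare yours against.

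Your overall plan is reasonable. The two base cases (no domino, exactly one domino among the list-$2$ vertices) are correctly dispatched by Theorem~\ref{thm-thom} with $P$ a single vertex or a single edge, and the observation that two distinct dominos lie at graph-distance at least $4$ does follow from (1)--(3) as you indicate. The inductive reduction---pick a domino $u_1u_2$, precolour $u_1$ with $c_1\in L(u_1)\setminus L(u_2)$, delete $u_1$, and remove $c_1$ from the lists of its remaining neighbours (all of which, by (1) and girth $\ge 5$, have list-size three and are pairwise non-adjacent)---is a sensible way around the multiple-domino obstruction, and the re-insertion step is sound since $c_1$ has been removed from the list of every surviving neighbour of $u_1$ and $c_1\notin L(u_2)$.

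The weak point is the verification that the reduced pair $(G',L')$ still satisfies (1)--(3), which you yourself call the ``technical core'' but do not carry out. The two tools you name---girth (no two neighbours of $u_1$ adjacent) and condition~(2) applied in $G$ to paths $u_2u_1wx$---only rule out new adjacent pairs of list-$2$ vertices; that is, they show that (1) survives and that no new domino is created, but they say nothing about (2) or (3) in $G'$. At least one subcase genuinely requires condition~(3) rather than (2): if $abcd$ is a path in $G'$ with $|L'(a)|=|L'(b)|=|L'(d)|=2$, where $a,b$ already had lists of size $2$ in $L$ and $d$ is a neighbour of $u_1$ whose list shrank (so $|L(d)|=3$), then necessarily $|L(c)|=3$ (else $abc$ violates (1) in $G$), so $u_2u_1dc$ yields nothing from (2); one must instead observe that $u_2u_1dcba$ is a length-$5$ path violating (3) in $G$. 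Analogous bookkeeping is needed to exclude (3)-type violations in $G'$, and the branch $L(u_1)=L(u_2)$---dismissed in a parenthesis, yet it forces you to delete \emph{both} endpoints and so introduces two ``centres'' whose shrunk neighbourhoods interact---multiplies the cases further. None of this looks fatal, and I believe the argument can be completed along the lines you indicate, but as written the inductive step is an outline rather than a proof.
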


However, even the assumptions of Corollary~\ref{cor-thm} turn out
to be unnecessarily restrictive.  The main result of the first part of our paper is
the following:

\begin{theorem}\label{thm-mainsim}
Let $G$ be a plane graph of girth at least $5$ and $F$ a
face of $G$.  Let $L$ be an
assignment of lists to the vertices of $G$
such that
$|L(v)|=3$ for $v\in V(G)\setminus V(F)$, $|L(v)|\ge 2$ for $v\in V(F)$, and
\begin{itemize}
\item $G$ does not contain a path $v_1v_2v_3$
with $|L(v_1)|=|L(v_2)|=|L(v_3)|=2$,
\item $G$ does not contain a path $v_1v_2v_3v_4v_5$
with $|L(v_1)|=|L(v_2)|=|L(v_4)|=|L(v_5)|=2$.
\end{itemize}
Then, $G$ can be $L$-colored.
\end{theorem}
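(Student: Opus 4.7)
My plan is to argue by induction on $|V(G)|$, using Corollary~\ref{cor-thm} as the base to which every reduction is driven. Using the standard preliminary reductions in this area, I may assume that $G$ is $2$-connected and that $F$ is bounded by a cycle $C$: an isolated vertex, a pendant vertex, a cut vertex, or a chord of $F$ either allows a trivial deletion or splits $G$ into two strictly smaller instances both satisfying the hypotheses (the two lists of size~$2$ on the chord are produced from the size~$3$ list of the shared vertex). I may also assume that every interior vertex has degree at least~$3$, since a degree-$2$ interior vertex with a list of size~$3$ can be coloured last.

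If neither forbidden configuration of Corollary~\ref{cor-thm} appears in the reduced instance, the corollary finishes the argument. Hence it suffices to consider the case that $G$ contains either
\emph{(a)} a path $v_1v_2v_3v_4$ with $|L(v_1)|=|L(v_2)|=|L(v_4)|=2$, or
\emph{(b)} a path $v_1v_2v_3v_4v_5v_6$ with $|L(v_1)|=|L(v_2)|=|L(v_5)|=|L(v_6)|=2$;
in either case the first hypothesis of Theorem~\ref{thm-mainsim} forces $|L(v_3)|=3$ in~(a) and $|L(v_3)|=|L(v_4)|=3$ in~(b). In such a configuration I intend to precolour a single carefully chosen vertex $w$ by a colour $c\in L(w)$, delete $c$ from the lists of its neighbours, and apply induction to $G-w$. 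The natural candidate in~(a) is to precolour the interior vertex $v_3$, so that $L'(v_2)$ and $L'(v_4)$ become lists of size at least~$1$ (at least, after possibly also committing to $v_1$'s unique available colour); in~(b) the symmetric choice of precolouring $v_3$ or $v_4$ is the candidate. After such a step, the two neighbours of $w$ on the path have their lists reduced to size one, creating a short precoloured path of length at most~$5$ to which Theorem~\ref{thm-thom} can be applied on the remaining graph.

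The main obstacle is the choice of the colour $c$: it must be selected so that no new forbidden configuration of Theorem~\ref{thm-mainsim} is created in $G-w$, so that $v_3$ (or $v_4$) indeed has a colour consistent with the precolouring inherited along~$C$, and so that, where several bad paths in $G$ overlap or where a bad path wraps around nearly all of $C$, the remaining graph still has a suitable face containing the precoloured path. Typically this is proved by case analysis on the lists $L(v_1),L(v_2),L(v_3),L(v_4)$ (and $L(v_5),L(v_6)$ in case~(b)): one shows that unless very restricted coincidences of lists occur there is enough freedom to pick $c$, and in the exceptional coincidence cases one switches to a different precolouring target, for instance precolouring $v_2$ by its colour not in $L(v_1)$, or identifying $v_1$ with $v_4$ across a short path through interior degree-$3$ vertices. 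Handling these exceptional coincidences, and checking that every reduction preserves girth at least~$5$ and the outer-face structure, is the technical heart of the argument; once this analysis is complete, induction on $|V(G)|$ closes the proof.
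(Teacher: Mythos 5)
Your plan rests on inducting directly on $|V(G)|$ with the hypotheses of Theorem~\ref{thm-mainsim} as the invariant, reducing everything to Corollary~\ref{cor-thm} (or Theorem~\ref{thm-thom}). That invariant is too weak, and the two key reduction moves you propose each break it in a way you do not address.

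First, the chord reduction: if $uv$ is a chord of the outer cycle, splitting into $G_1$ and $G_2$ does not produce two instances of Theorem~\ref{thm-mainsim}. Whichever piece you colour first determines the colours of $u$ and $v$ uniquely, so the second piece must be coloured with $u,v$ \emph{precoloured}, i.e.\ with lists of size one, not size two derived from "the size-3 list of the shared vertex." A precoloured edge (or path) is not allowed by the hypotheses of Theorem~\ref{thm-mainsim}; it requires the stronger statement of Theorem~\ref{thm-thom}, whose condition on size-2 lists (independent set, not adjacent to the precoloured path) you cannot guarantee in the split pieces. This is precisely why the paper's proof proceeds via the more technical Theorem~\ref{thm-main}, whose induction hypothesis carries a precoloured path $P$ of length up to four along with the "valid list assignment" and "bad vertex" bookkeeping; chord elimination there is Lemma~\ref{lemma-1chord}, which is nontrivial and uses the class A/B conclusion of Theorem~\ref{thm-main} as part of the argument.

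Second, and more fundamentally, precolouring and deleting an interior vertex $w=v_3$ does not land you back in the hypothesis class. The neighbours of $w$ that are not on $F$ start with lists of size three; after you delete $w$'s colour from them they have lists of size two while remaining interior to the outer face of $G-w$. That violates the hypothesis "$|L(v)|=3$ for $v\notin V(F)$" of both Theorem~\ref{thm-mainsim} and Corollary~\ref{cor-thm}, so neither can be applied to $G-w$. Also, $v_3$ being interior means the "short precoloured path of length at most $5$" you create is not on the outer face, so Theorem~\ref{thm-thom} is inapplicable as well. Avoiding these difficulties is exactly what forces the paper to prove a genuinely stronger inductive statement (Theorem~\ref{thm-main}) with its notions of $P$-critical graph, valid assignment, and class A/B outcomes. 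Your proposal names the shape of the difficulty (`exceptional coincidences of lists', reductions possibly creating new forbidden configurations) but gives no mechanism that could resolve it within the weaker induction hypothesis; without a strengthened inductive statement the approach does not close.
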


The proof of this theorem is presented in Section~\ref{sec-proof}.
Let us note that the condition that $G$ does not contain a path $v_1v_2v_3v_4v_5$ with $|L(v_1)|=|L(v_2)|=|L(v_4)|=|L(v_5)|=2$
cannot be removed, as the graph in Figure~\ref{fig-cex} cannot be colored from the
prescribed lists.

\begin{figure}
\center{\epsfbox{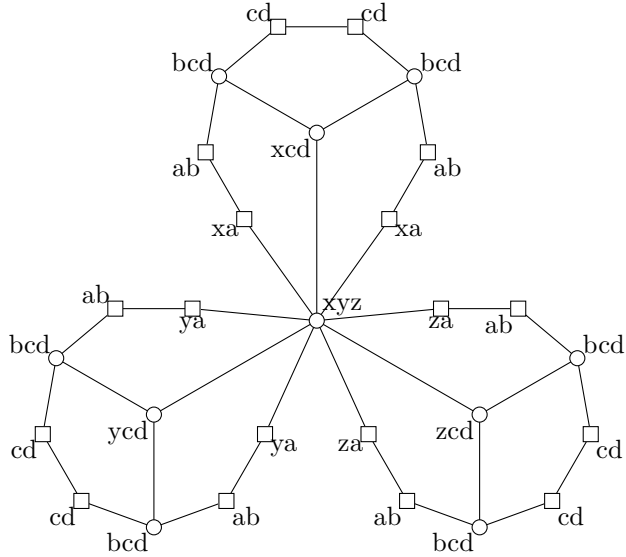}}
\caption{A counterexample for a strengthening of Theorem~\ref{thm-main}.}
\label{fig-cex}
\end{figure}

In the rest of the paper, we show two applications of Theorem~\ref{thm-mainsim}, both concerning
critical graphs.  Let us start with definitions.

A graph $G$ is {\em $k$-critical} if $G$ is not $(k-1)$-colorable, but every proper subgraph
of $G$ is $(k-1)$-colorable.  We need to generalize the notion of a critical graphs in two directions:
we need to apply it to the list coloring instead of the ordinary coloring, and we also want to consider
the situation that a subgraph of $G$ is precolored (like e.g. the path $P$ in Theorem~\ref{thm-thom}).

Consider a graph $G$, a subgraph (not necessarily induced) $S\subseteq G$ and
an assignment $L$ of lists to vertices in $V(G)\setminus V(S)$.  A graph $G$ is {\em strongly $S$-critical (with respect to $L$)}
if there exists a coloring of $S$ that does not extend to an $L$-coloring
of $G$, but extends to an $L$-coloring of every proper subgraph $G'\subset G$ such that $S\subseteq G'$.
A graph $G$ is {\em $S$-critical (with respect to $L$)} if for every proper subgraph $G'\subset G$ such that $S\subseteq G'$,
there exists a coloring of $S$ that does not extend to an $L$-coloring of $G$, but extends to an $L$-coloring of $G'$.
We call a (strongly) $S$-critical graph $G$ {\em proper} if $G\neq S$.
Note that every strongly $S$-critical graph is also $S$-critical, but the converse is false.
If $S=\emptyset$ and all vertices have the same list of $k$ colors, then $G$ is $\emptyset$-critical (or strongly $\emptyset$-critical)
if and only if $G$ is $(k+1)$-critical.

While the definition of a strongly critical graph may seem more natural, the notion of
a critical graph is often more suitable for both proofs and applications---for instance,
every graph $H\supseteq S$ has an $S$-critical subgraph $G\supseteq S$ such that each coloring
of $S$ extends to $H$ if and only if it extends to $G$ (we call such a subgraph $G$ an {\em $S$-skeleton of $H$}).
However, $H$ does not have to contain a strongly $S$-critical subgraph with this property.

In \cite{thom-surf}, Thomassen characterized the strongly $F$-critical plane graphs of girth $5$, where $F$ consists
of a boundary of a face of length at most $12$:

\begin{theorem}\label{thm-cyclesstr}
Let $G$ be a plane graph of girth at least $5$, with the outer face $F$ bounded by an induced cycle
of length at most $12$.  Let $L$ be a list assignment of lists of size three to the vertices of
$V(G)\setminus V(F)$.  If $G$ is proper strongly $F$-critical graph, then

\begin{itemize}
\item[(a)] $\ell(F)\ge 9$ and $G-V(F)$ is a tree with at most $\ell(F)-8$ vertices, or
\item[(b)] $\ell(F)\ge 10$ and $G-V(F)$ is a connected graph with at most $\ell(F)-5$ vertices
containing exactly one cycle, and the length of this cycle is $5$, or
\item[(c)] $\ell(F)=12$ and every second vertex of $F$ has degree two and is incident with a $5$-face.
\end{itemize}
\end{theorem}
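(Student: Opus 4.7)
I argue by considering a minimum counterexample: assume $G$ is a vertex-minimum proper strongly $F$-critical plane graph of girth at least $5$ with $\ell(F)\le 12$ for which none of (a)--(c) holds. Strong $F$-criticality supplies a proper $L$-colouring $\varphi$ of $V(F)$ that does not extend to an $L$-colouring of $G$. Set $H = G - V(F)$ and, for $v\in V(H)$, put $L'(v) = L(v)\setminus\{\varphi(u) : u\in N_G(v)\cap V(F)\}$. In the planar embedding of $H$ inherited from $G$, the face $F^\ast$ absorbing the removed vertices of $F$ has on its boundary walk exactly those $v\in V(H)$ adjacent to $V(F)$, and every vertex with $|L'(v)|<3$ lies on $F^\ast$. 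Any $L'$-colouring of $H$ would glue with $\varphi$ into an $L$-colouring of $G$, so Theorem~\ref{thm-mainsim} applied to $(H,L',F^\ast)$ must fail.

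Before pushing the application of Theorem~\ref{thm-mainsim}, I would collect the standard reducible-configuration consequences of the minimum-counterexample assumption and the girth condition: (i) every $v\in V(H)$ has $\deg_G(v)\ge 3$, for otherwise $\varphi$ extends to the proper subgraph $G-v$ by minimality and then to $v$; (ii) any two $F$-neighbours of a single $v\in V(H)$ lie at $F$-distance at least $3$ because $F$ is induced and $G$ has girth $\ge 5$, so $v$ has at most $\lfloor\ell(F)/3\rfloor\le 4$ neighbours on $F$; (iii) analogous length-three separation holds between distinct vertices of $V(H)$ both attached to $F$. The failure of Theorem~\ref{thm-mainsim} can then come only from one of three obstructions: (P1) some $v\in V(H)$ has $|L'(v)|\le 1$, forced by at least two $F$-neighbours receiving distinct $\varphi$-colours; (P2) $H$ contains a path $u_1u_2u_3$ with $|L'|=2$ throughout; (P3) $H$ contains a path $u_1u_2u_3u_4u_5$ with $|L'|=2$ at the four vertices $u_1,u_2,u_4,u_5$. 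For each obstruction I would either recolour $F$ along a short arc to obtain a candidate $\varphi'$ that can be patched back to an extension of $\varphi$ (contradicting non-extension by minimality of $G$), or else translate the obstruction into a precise attachment pattern of $V(H)$ to $V(F)$.

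The final step is a discharging count based on Euler's formula for girth-$5$ plane graphs, assigning each vertex $v$ charge $\deg(v)-4$ and each face $f$ charge $\ell(f)-4$ so that the total is $-8$. I would then send charge from non-outer faces and from $F$ into the low-degree vertices of $V(H)$ along rules tuned so that the reducible-configuration consequences and the absence of obstructions (P1)--(P3) in any admissible recolouring of $F$ force non-negative final charges at every vertex and every non-outer face. The resulting inequality, combined with the three patterns of permitted attachments of $V(H)$ to $V(F)$, pins $G - V(F)$ to one of (a), (b), or (c): the tree shape in (a) arises from obstruction to killing (P2); an irremovable $5$-cycle (forced by girth) gives the unicyclic structure in (b); and obstruction to killing (P3), which can occur only at $\ell(F)=12$, produces configuration (c). The main obstacle is exactly the analysis driven by (P3): a five-vertex path in $H$ can attach to $F$ at up to four different places under extremely tight Euler budget, and working through the subcases is what both forces $\ell(F)=12$ and explains the alternating degree-$2$ vertices on $F$ incident to $5$-faces in conclusion (c).
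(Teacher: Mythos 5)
The paper does not prove Theorem~\ref{thm-cyclesstr}; it is taken verbatim from Thomassen~\cite{thom-surf}, where it is established by a self-contained induction that predates anything in the present paper. There is therefore no ``paper's own proof'' to compare against line by line, but your proposal has a structural flaw that makes it inadmissible within this paper's logical framework regardless.

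The central step of your plan is to delete $V(F)$, pass to $H=G-V(F)$ with the reduced lists $L'$, and then invoke Theorem~\ref{thm-mainsim}. But in this paper Theorem~\ref{thm-mainsim} is the special case $P=\emptyset$ of Theorem~\ref{thm-main}, and the proof of Theorem~\ref{thm-main} makes essential, repeated use of Corollary~\ref{cor-cycles} and of the claim~(\ref{cl-face}) (e.g.\ to bound $\ell(F)\ge 9$, to control short separating cycles, and throughout Lemma~\ref{lemma-chords}). Corollary~\ref{cor-cycles} is derived from Lemma~\ref{lemma-cycles}, and the proof of Lemma~\ref{lemma-cycles} explicitly says ``Observe that $G'$ is a proper strongly $F$-critical graph, thus it satisfies one of the conditions (a), (b) or (c)'' --- that is, it appeals to Theorem~\ref{thm-cyclesstr}. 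So deducing Theorem~\ref{thm-cyclesstr} from Theorem~\ref{thm-mainsim} is circular. To save the idea you would need an independent proof of Theorem~\ref{thm-mainsim} (or enough of it) that avoids Theorem~\ref{thm-cyclesstr}, and that is a substantial project you have not undertaken.

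Beyond the circularity, the proposal is thin exactly where Thomassen's argument is hard. Your description of the boundary of $F^\ast$ is not quite right (a vertex of $H$ can lie on $F^\ast$ without being adjacent to $V(F)$), and the discharging finale is asserted but not carried out: the crux of the theorem is that $\ell(F)\le 12$ forces a tiny Euler budget, and the case analysis reconciling that budget with the attachment patterns arising from (P1)--(P3) --- in particular showing that (P3) can only survive when $\ell(F)=12$ with the alternating degree-two pattern of conclusion (c) --- is the entire content of the theorem. Sketching that the difficulty lies in (P3) is not the same as resolving it. As written, the proposal does not constitute a proof.
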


Here, by $\ell(F)$ we mean the length of the face, i.e., the number of edges in its boundary walk (which for
$2$-connected graphs coincides with the number of vertices incident with $F$).  Similarly, for a path $P$,
we denote its length (number of edges) by $\ell(P)$.

If $\ell(F)\le 11$, the complete list of strongly $F$-critical graphs is provided by Theorem~\ref{thm-cyclesstr}, however for $\ell(F)=12$,
only a necessary condition is given in the case (c).  As the first application of Theorem~\ref{thm-mainsim},
we complete this classification by showing that the only $F$-critical graph satisfying the condition (c) is the one depicted
in Figure~\ref{fig-crit12}.  The proof is presented in Section~\ref{sec-cr12}.

\begin{figure}
\center{\epsfbox{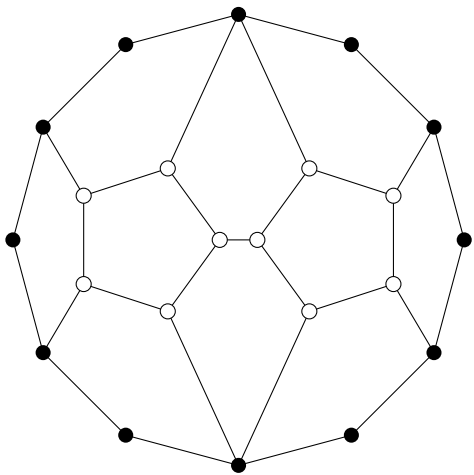}}
\caption{A critical graph bounded by a $12$-cycle.}
\label{fig-crit12}
\end{figure}

For ordinary (not list) coloring, Thomassen~\cite{thom-surf} proved that there are
only finitely many $4$-critical graphs of girth $5$ embedded in any fixed surface.  In fact,
his result allows a constant number of precolored vertices.  An alternative proof
with stronger bounds on the sizes of the critical graphs is given by Dvo\v{r}\'ak, Kr\'al' and
Thomas~\cite{proof-lincrit}.  Our goal is to
prove the same result for the list-coloring critical graphs.  We present our general argument
in a followup paper.  As the second application of Theorem~\ref{thm-mainsim},
we consider the special case of a plane graph in that vertices incident with one face
are precolored.  In Section~\ref{sec-4crit}, we show the following bound:

\begin{theorem}\label{thm-numvert}
Let $G$ be a plane graph of girth at least $5$ with the outer face $F$ bounded by a cycle of length at least $10$,
and $L$ an assignment of lists of size three to vertices of $V(G)\setminus V(F)$.  If $G$ is $F$-critical, then
$|E(G)|\le 18\ell(F)-160$ and $|V(G)|\le \frac{37\ell(F)-320}{3}$.
\end{theorem}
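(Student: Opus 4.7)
I would prove the edge bound $|E(G)|\le 18\ell(F)-160$ by a discharging argument powered by Theorem~\ref{thm-mainsim}, and then deduce the vertex bound from it. In any $F$-critical graph, no $v\in V(G)\setminus V(F)$ can have $\deg(v)\le 2$, since otherwise any coloring of $V(F)$ that extends to $G-v$ would extend further to $v$ in $G$, contradicting $F$-criticality. Hence
\[
2|E(G)|=\sum_{v}\deg(v)\ge 3(|V(G)|-\ell(F))+2\ell(F)=3|V(G)|-\ell(F),
\]
so $|V(G)|\le (2|E(G)|+\ell(F))/3$, which combined with the edge bound yields $|V(G)|\le (37\ell(F)-320)/3$.

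The reducibility engine for the edge bound is the following use of Theorem~\ref{thm-mainsim}. Suppose $H$ is a proper subgraph of $G$ containing $V(F)$ and, for some coloring $\varphi$ of $V(F)$, the induced list assignment $L'$ on $V(H)\setminus V(F)$ (obtained by restricting $L$ according to $\varphi$, possibly after contracting or deleting parts of $V(F)$) satisfies the hypotheses of Theorem~\ref{thm-mainsim}. Then $\varphi$ extends to an $L$-coloring of $H$, and by $F$-criticality this extension lifts to an $L$-coloring of $G$, contradicting the fact that $\varphi$ was chosen so as not to extend. Applying this principle systematically rules out a catalog of local configurations in $G$: chords of $F$, non-$V(F)$ vertices with too few neighbors outside $V(F)$, vertices of $V(G)\setminus V(F)$ with multiple neighbors in $V(F)$ arranged inconveniently, short faces sharing too many degree-$3$ vertices with $V(F)$, and arrangements of degree-$3$ vertices along short paths near $V(F)$.

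With these structural restrictions in hand, I would assign initial charge $\mu(v)=\deg(v)-4$ to each vertex and $\mu(f)=\ell(f)-4$ to each face of $G$, so that $\sum\mu=-8$ by Euler's formula. Internal faces then begin with charge $\ge 1$, non-$V(F)$ vertices with charge $\ge -1$, and the outer face with $\ell(F)-4$. The discharging rules redistribute charge from the outer face and from longer internal faces toward low-degree vertices and short incident faces; they are calibrated, using exactly the forbidden configurations established above, so that every non-$V(F)$ vertex and every internal face ends with non-negative final charge, while each $V(F)$-vertex can absorb only a bounded deficit. Summing and isolating the contribution that must be supplied by $F$ then yields the linear bound $|E(G)|\le 18\ell(F)-160$; the coefficient $18$ encodes the worst-case per-unit-length charge sent out by $V(F)$.

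The main obstacle will be the case analysis for the reducible configurations. For each candidate subgraph $H$ and precoloring $\varphi$, one must verify that the induced list assignment $L'$ on the outer face of $H$ contains no forbidden pattern — neither a path $v_1v_2v_3$ with three consecutive lists of size $2$, nor a path $v_1v_2v_3v_4v_5$ with lists of size $2$ at positions $1,2,4,5$. Tracking exactly which vertices acquire reduced lists under each precoloring of $V(F)$ is delicate, especially since Theorem~\ref{thm-mainsim} is known to be tight against these patterns (Figure~\ref{fig-cex}). The hypothesis $\ell(F)\ge 10$ ensures enough slack on the outer cycle for these checks to go through in every case; for shorter outer cycles the direct classification of Theorem~\ref{thm-cyclesstr} takes over.
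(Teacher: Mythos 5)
Your deduction of the vertex bound from the edge bound---minimum degree $3$ off $F$, degree $\ge 2$ on $F$, then $3|V(G)|-\ell(F)\le 2|E(G)|$---is exactly the paper's last step. But the edge bound is where your plan and the paper part ways, and where your plan has a real gap.

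The paper does not discharge. It introduces a face-weight function $w$ with $w(x)=0$ for $x\le 4$, $w(5)=1/7$, and $w(x)=x-5$ for $x\ge 6$, sets $w(G)=\sum_{f\neq F}w(\ell(f))$, and proves (Theorem~\ref{thm-fincrit}) by induction on $(\ell(F),|E(G)|)$ that every non-exceptional $F$-critical graph satisfies $w(G)\le w(\ell(F)-5)+5w(5)$. The induction step splits $G$ along chords and short $t$-chords of $F$ (or of a ``peeling'' of $F$) into two critical pieces and adds their weights, using Lemma~\ref{lemma-struct} to guarantee one of eight reducible configurations and Theorem~\ref{thm-mainsim} only inside that lemma and in (\ref{cl-4chordcol}). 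Since $w(x)\ge w(5)x/5$ and $\sum_f\ell(f)=2|E(G)|$, the weight bound converts immediately into $|E(G)|\le 18\ell(F)-160$.

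The gap: with initial charges $\mu(v)=\deg(v)-4$, $\mu(f)=\ell(f)-4$, the total is identically $-8$, \emph{independently of $|E(G)|$}. Your stated discharging goal---every internal vertex and every internal face ends nonnegative, $V(F)$-vertices absorb a bounded deficit, then ``sum and isolate the contribution supplied by $F$''---is a set of inequalities on final charges that a charge-conserving redistribution can satisfy on arbitrarily large graphs; the size of $G$ simply never appears in the inequality $-8\ge -c\ell(F)+\mu^*(F)$ you would derive. To extract a linear edge bound you must tie the accounting back to $|E(G)|$, for instance by ensuring each internal face retains a residual charge that is \emph{strictly positive and proportional to its length}. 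This is precisely what the paper's $w$ is engineered to do: $w(5)=1/7$ is tuned so that $w(x)\ge w(5)x/5$ while a single internal $5$-cycle of degree-$3$ vertices (the extremal case) does not overload the inductive bound. Your sketch contains no such device, and the specific constants $18$ and $160$ would not emerge from the setup you describe. A further practical obstacle: the paper's reductions in cases (g) and (h) of Lemma~\ref{lemma-struct} pass to graphs $H'$ obtained by identifying $v_1$ with $v_3$ across a $4$-chord and are therefore not subgraphs of $G$ at all; a discharging argument confined to $G$ would need an entirely different way to handle those configurations.
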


Let us note that this bound is much stronger than the ones shown in Thomassen~\cite{thom-surf}
(who shows that $|V(G)|\le 2^{O(\ell(F)^2)}$) or in Dvo\v{r}\'ak et al.~\cite{proof-lincrit}
(who shows that $|V(G)|\le c\ell(F)$ for a constant $c\approx 10^6$), even though these
papers only consider ordinary $3$-coloring.

\section{Proof of Theorem~\ref{thm-mainsim}}\label{sec-proof}

For the purpose of the induction, we prove an (unfortunately rather technical)
generalization of Theorem~\ref{thm-mainsim}.  In order to state this
generalization, we need to introduce several definitions.

Let $G$ be a plane graph of girth at least $5$.  Let $F$ be the outer face of $G$
and let $P=p_1\ldots p_k$ be a path with $V(P)\subseteq V(F)$.
Consider an assignment $L$ of lists to vertices of $V(G)\setminus V(P)$
such that $|L(v)|\ge 2$ for each vertex $v$ and $|L(v)|=3$ for each
$v\not\in V(F)$.  Let $I_0(G,P,L)$ be the set of vertices with the list of
size two.  Let $I(G,P,L)=I_0(G,P,L)$ if $\ell(P)\le 2$ and $I(G,P,L)=I_0(G,P,L)\cup V(P)$ otherwise.
Let us call a vertex $v$ {\em bad} if there exists a path $vv_1v_2$ with $|L(v_1)|=2$ and $v_2\in I(G,P,L)$,
or a path $vv_1v_2v_3v_4$ with $|L(v_1)|=|L(v_3)|=2$, $|L(v_2)|=3$ and $v_4\in I(G,P,L)$.
We say that the list assignment $L$ is {\em valid} if no vertex with list of size two is bad.
Let us note that if $P=\emptyset$, then $L$ is valid if and only if it satisfies the assumptions
of Theorem~\ref{thm-mainsim}.

Suppose that $\ell(P)=4$.  For a set $X\subseteq V(P)$, colorings $\psi_1$ and $\psi_2$ of $P$
are {\em $X$-different} if there exists $v\in X$ such that $\psi_1(v)\neq \psi_2(v)$.
We say that $G$ is {\em class A} if
\begin{itemize}
\item each of $p_1$ and $p_5$ is adjacent to a vertex with list of size two, and
\item there exists a coloring $\psi^{(G,P,L)}$ of $P$ such that every coloring $\psi$
of $P$ that is $\{p_1,p_2,p_4,p_5\}$-different from $\psi^{(G,P,L)}$ extends to an $L$-coloring of $G$.
\end{itemize}

We say that $G$ is {\em class B} if there exists a coloring $\psi^{(G,P,L)}$ of $P$ such that
every coloring $\psi$ of $P$ that is $\{p_1,p_3,p_5\}$-different from $\psi^{(G,P,L)}$
extends to an $L$-coloring of $G$.

\begin{theorem}\label{thm-main}
Let $G$ be a plane graph of girth at least $5$ with the outer face $F$,
let $P=p_1\ldots p_k$ be a path of length at most four such that $V(P)\subseteq V(F)$,
and let $L$ be a valid list assignment.  Furthermore, if $\ell(P)=2$, then assume
that $p_1$ or $p_3$ is not bad, and if $\ell(P)\ge 3$, then assume that no vertex of $P$ is bad.
If $G$ is a proper $P$-critical graph, then $\ell(P)=4$
and either $G$ is equal to a $5$-face, or $G$ is class A or class B.
\end{theorem}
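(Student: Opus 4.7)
The plan is to proceed by induction on $|V(G)|$, taking $G$ to be a proper $P$-critical counterexample of minimum order. The goal is to derive a contradiction by either producing an $L$-coloring of $G$ extending some specified coloring of $P$, or by exhibiting a smaller proper $P'$-critical subgraph that violates the inductive hypothesis.

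First I would establish standard reducibility lemmas for such a minimum $G$: the outer face $F$ has no chord disjoint from $P$ (otherwise split along the chord and induct on each side); no separating cycle of length at most nine has nontrivial interior (split and apply Theorem~\ref{thm-thom} to the interior); every internal vertex has degree at least three; and any degree-$2$ vertex on $F\setminus V(P)$ adjacent to $p_1$ or $p_k$ can be absorbed into $P$ by fixing its color. If $\ell(P)<4$, I would then extend $P$ along $F$ to length four by successively pre-coloring neighbors on $F$ using their size-$2$ lists, checking carefully that the new path $P'$ satisfies the validity and no-bad-vertex conditions; the $\ell(P')=4$ case then forces $G$ to be a $5$-face or of class A/B, each of which can be ruled out by unwinding the extension and using that $G$ was already $P$-critical.

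When $\ell(P)=4$, the core argument is a local analysis of the $5$-faces meeting $P$. For a $5$-face $F'$ with $V(F')\cap V(P)\ne\emptyset$, I would perform a reduction, typically by deleting an internal vertex of $F'$ or identifying two of its vertices, yielding a plane graph $G'$ of girth $5$ with a suitably modified list assignment $L'$. If the resulting smaller instance produces a proper $P$-critical graph, one invokes the inductive hypothesis on $G'$ to classify it, and then translates the blocking coloring $\psi^{(G',P,L')}$ back to $G$ to show that $G$ itself must be class A or class B (or a $5$-face). The precise choice of reduction depends on the list sizes of the vertices on $F'$, the adjacency to $P$, and the structure of the bad-vertex graph.

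The principal obstacle is the bookkeeping around validity and bad vertices: every reduction (edge contraction, vertex deletion, color fixing) can create new short paths of low-list vertices or introduce new bad vertices, so each branch of the case analysis must verify that the hypotheses transfer to the smaller instance, or produce an alternative reduction when they fail. A secondary difficulty is the asymmetric treatment of $\ell(P)=2$ versus $\ell(P)\ge 3$, which forces extra case-splits whenever a reduction changes the length of $P$, together with the need to distinguish class A from class B at the end by checking exactly which colorings of $P$ fail to extend.
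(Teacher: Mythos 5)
Your plan has the right outer shape (minimal counterexample, $2$-connectedness, ruling out chords and short separating cycles, extending $P$ to length four), and those preparatory steps do parallel Lemmas~\ref{lemma-conn}--\ref{lemma-chords} of the paper. However, the core of your $\ell(P)=4$ argument is in the wrong place, and several of your preliminary claims are weaker than what the proof actually needs.

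First, you propose to kill only chords of $F$ that are disjoint from $P$. The paper's Lemma~\ref{lemma-1chord} proves that $F$ has \emph{no} chords at all, including chords through interior vertices of $P$, and this already requires the full class~A/class~B machinery: when a chord meets $p_3$ the two sides must be classified by the inductive hypothesis and then a combined blocking coloring $\psi^{(G,P,L)}$ must be assembled. Without eliminating chords through $P$, the later chord lemmas (that no short $t$-chord separates two vertices of $D$, $t\le 4$) do not hold, and those lemmas are invoked constantly in the main reduction to certify validity of the residual list assignment.

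Second, and more importantly, your main reduction — ``local analysis of the $5$-faces meeting $P$'' plus vertex deletion or identification inside such a face — is not what drives the proof, and there is no reason a $5$-face should meet $P$ at all in a minimal counterexample. The paper instead looks at the segment $v_1v_2v_3v_4v_5$ of the outer cycle just beyond $p_k$, chooses a set $X\subseteq\{v_1,\ldots,v_4\}$ whose exact form depends on the pattern of list sizes among these five vertices, colors $X$ consistently with the target precoloring $\psi$ of $P$ so that $v_{|X|+1}$ retains at least two legal colors, and then deletes $X$ (together with $v_{|X|+1}$ or $p_k$ if they become degree one). The residual list assignment is valid unless $G$ contains one of four specific local configurations (Obstructions~A--D), and each obstruction then demands a separate, nontrivial argument to show that $G$ is class~A or class~B. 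Your sketch contains no analogue of the case-by-case definition of $X$ or of the obstruction analysis; these are precisely the nonobvious ideas that make the induction close, and the proof essentially collapses without them. A vertex-identification move does appear later in the paper, but only in the discharging argument for Theorem~\ref{thm-fincrit}, not here.

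Finally, your description of the $\ell(P)<4$ extension says the added vertices are ``pre-colored from their size-$2$ lists''; in fact the new vertices are simply appended to $P$ (so that the inductive statement quantifies over all precolorings of $P'$), and one then reasons about the resulting class~A/B blocker. The distinction matters because after extension the conclusion for $G$ is ``class~A or~B with respect to $P'$'', and one still has to translate that back to a contradiction with $P$-criticality, which requires an explicit argument about free colors on the appended vertices.
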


Theorem~\ref{thm-mainsim} is the special case of Theorem~\ref{thm-main} where $P$ is empty.
Two examples of $P$-critical graphs that are class A or class B and satisfy assumptions of Theorem~\ref{thm-main}
are depicted in Figure~\ref{fig-classAB}.  Let us note that infinitely many such graphs exist.

\begin{figure}
\center{\epsfbox{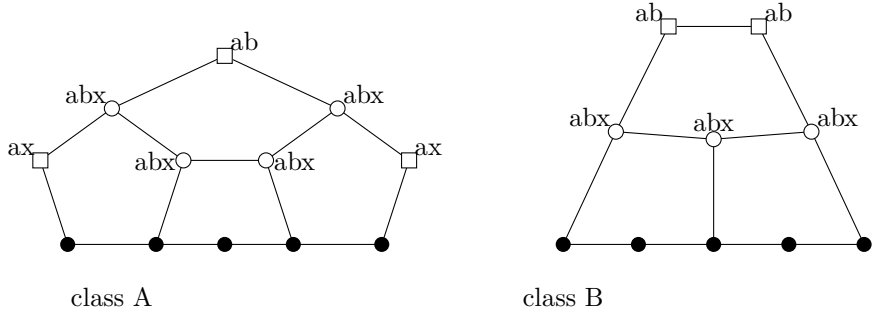}}
\caption{A class A and a class B graph.}
\label{fig-classAB}
\end{figure}

Before proving Theorem~\ref{thm-main}, let us show several observations regarding critical graphs.
Let $G$ be a $T$-critical graph (with respect to some list assignment).  For $S\subseteq G$, a
graph $G'\subseteq G$ is an {\em $S$-component} of $G$ if $S\subseteq G'$, $T\cap G'\subseteq S$ and all edges of
$G$ incident with vertices of $V(G')\setminus V(S)$ belong to $G'$.  For example, if $G$ is a plane graph with $T$
contained in the boundary of its outer face and $S$ is a cycle in $G$, then the subgraph of $G$ consisting of the vertices and edges
drawn the closed disk bounded by $S$ is an $S$-component of $G$.

\begin{lemma}\label{lemma-crs}
Let $G$ be a $T$-critical graph with list assignment $L$.  Let $G'$ be an $S$-component of $G$, for some $S\subseteq G$.
Then $G'$ is $S$-critical.
\end{lemma}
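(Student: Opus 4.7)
The plan is to take an arbitrary proper subgraph $G''\subsetneq G'$ with $S\subseteq G''$, use it to build a correspondingly modified proper subgraph of $G$, and then transport the $T$-criticality of $G$ down to $S$-criticality of $G'$. Concretely, I would define the ``replacement'' subgraph $H$ of $G$ by $V(H)=(V(G)\setminus V(G'))\cup V(G'')$ and $E(H)=(E(G)\setminus E(G'))\cup E(G'')$. The $S$-component property---that every edge of $G$ incident to a vertex of $V(G')\setminus V(S)$ lies in $G'$---is precisely what guarantees that $H$ is a well-defined subgraph of $G$: each vertex removed in going from $G'$ to $G''$ lies in $V(G')\setminus V(S)$, so all its incident edges in $G$ already belong to $E(G')$ and are not reintroduced via $E(G)\setminus E(G')$. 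Combined with $T\cap G'\subseteq S\subseteq G''$, this gives $T\subseteq H\subsetneq G$.

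Applying $T$-criticality of $G$ to this $H$ would yield a coloring $\psi_T$ of $T$ and an $L$-coloring $\varphi$ of $H$ extending $\psi_T$ such that $\psi_T$ has no extension to an $L$-coloring of $G$. I would set $\psi_S := \varphi|_{V(S)}$. Then $\varphi|_{V(G'')}$ is an $L$-coloring of $G''$ extending $\psi_S$: the list constraints on $V(G'')\setminus V(S)$ are inherited from those on $V(H)\setminus V(T)$, because $V(G'')\cap V(T)\subseteq V(G')\cap V(T)\subseteq V(S)$.

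The remaining and only delicate step is to show that $\psi_S$ does not extend to an $L$-coloring of $G'$. I would argue by contradiction: given such an extension $\varphi'$, glue $\varphi'$ with $\varphi|_{V(G)\setminus V(G')}$ to form $\tilde\varphi$ on $V(G)$, well-defined because the two agree on $V(S)$. The only edges of $G$ that one has to verify are those in $E(G)\setminus E(G')$; the $S$-component property forces every endpoint of such an edge lying in $V(G')$ to actually lie in $V(S)$, so the edge belongs to $E(H)$ and is already properly colored by $\varphi$. Thus $\tilde\varphi$ would be an $L$-coloring of $G$ extending $\psi_T$, contradicting the choice of $\psi_T$. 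The whole proof is essentially bookkeeping, and the main obstacle is just being careful to invoke the $S$-component hypothesis exactly at the two points where it is needed---in defining $H$, and in verifying that the ``inside/outside'' gluing $\tilde\varphi$ is proper.
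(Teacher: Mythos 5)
Your proof is correct and uses the same core mechanism as the paper's: replace $G'$ by $G''$ inside $G$ to obtain a proper subgraph $H$ of $G$ containing $T$, invoke $T$-criticality of $G$ to get a coloring of $T$ extending to $H$ but not to $G$, restrict to $S$, and glue colorings of $G'$ and the rest of $G$ along $S$, using the $S$-component property at both the replacement and the gluing step. The paper argues by contradiction and reduces to the special case $G''=G'-e$ for a single edge $e$ (after first noting that isolated vertices of $G'$ lie in $S$, so such an $e$ exists), whereas you handle an arbitrary $G''$ directly; this is a purely presentational difference.
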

\begin{proof}
Since $G$ is $T$-critical, every isolated vertex of $G$ belongs to $T$, and thus every isolated vertex of $G'$ belongs to $S$.
Suppose for a contradiction that $G'$ is not $S$-critical.  Then, there exists an edge $e\in E(G')\setminus E(S)$ such that
every coloring of $S$ that extends to $G'-e$ also extends to $G'$.  Note that $e\not\in E(T)$.
Since $G$ is $T$-critical, there exists a coloring $\psi$ of $T$ that extends to an $L$-coloring $\varphi$ of $G-e$, but does not
extend to an $L$-coloring of $G$.  However, by the choice of $e$, the restriction of $\varphi$ to $S$ extends to an $L$-coloring
$\varphi'$ of $G'$.  Let $\varphi''$ be the coloring that matches $\varphi'$ on $V(G')$ and $\varphi$ on $V(G)\setminus V(G')$.
Observe that $\varphi''$ is an $L$-coloring of $G$ extending $\psi$, which is a contradiction.
\end{proof}

Lemma~\ref{lemma-crs} in conjunction with Theorem~\ref{thm-cyclesstr} describes the subgraphs drawn inside cycles
in plane critical graphs.  Since Theorem~\ref{thm-cyclesstr} is only stated for strongly critical graphs, 
let us show that it holds for critical graphs as well.

\begin{lemma}\label{lemma-cycles}
Let $G$ be a plane graph of girth at least $5$, with the outer face $F$ bounded by an induced cycle
of length at most $12$.  Let $L$ be a list assignment of lists of size three to the vertices of
$V(G)\setminus V(F)$.  If $G$ is proper $F$-critical graph, then $G$ satisfies one of the conditions
(a), (b) or (c) of Theorem~\ref{thm-cyclesstr}.
\end{lemma}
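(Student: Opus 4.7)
The plan is to reduce the lemma to Theorem~\ref{thm-cyclesstr} by producing a strongly $F$-critical subgraph $H$ of $G$ containing $F$, and then arguing that $H=G$. Since $G$ is proper there is an edge $e\in E(G)\setminus E(F)$, and the $F$-criticality of $G$ applied to the proper subgraph $G-e$ yields a coloring $\psi$ of $F$ that extends to an $L$-coloring of $G-e$ but not of $G$. I would let $H$ be a subgraph of $G$ with $F\subseteq H\subseteq G$ that is minimal (under inclusion) subject to $\psi$ failing to extend. By minimality, $H$ is strongly $F$-critical with witness $\psi$, and it is proper because $\psi$ is itself a valid coloring of $F$. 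Theorem~\ref{thm-cyclesstr} then forces $H$ to satisfy one of the conditions (a), (b), (c).

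Next I would establish $G=H$, which immediately gives the lemma. Suppose for contradiction that $H\subsetneq G$. Since $H$ is a proper subgraph of $G$ containing $F$ and $G$ is $F$-critical, there exists a coloring $\psi'$ of $F$ that extends to $H$ but not to $G$, so in particular $\psi'\neq\psi$. Reapplying the same minimality construction to $\psi'$ yields a second proper strongly $F$-critical subgraph $H'\subseteq G$ with witness $\psi'$; since $\psi'$ extends to $H$ but not to $H'$, we have $H'\not\subseteq H$. Theorem~\ref{thm-cyclesstr} applied to $H'$ gives that $H'$ also satisfies one of (a), (b), (c).

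The main obstacle is the ensuing case analysis ruling out the coexistence of two distinct strongly $F$-critical subgraphs inside a plane girth-$5$ graph sharing the outer face $F$ of length at most $12$. The structural descriptions in Theorem~\ref{thm-cyclesstr} are essentially tight, and this tightness drives the contradiction: in case (a) the interior of $H$ is a tree on at most $\ell(F)-8$ vertices, in case (b) it consists of a single $5$-cycle together with at most $\ell(F)-5$ vertices in total, and in case (c) every second vertex of $F$ is pinned to have degree exactly two. In each combination of cases for $H$ and $H'$, the extra vertices and edges of $H'$ must lie inside one of the inner faces of $H$; they either create a cycle of length less than $5$ (contradicting the girth hypothesis), or when merged with $H$ produce a subgraph whose vertex count or structural pattern exceeds what Theorem~\ref{thm-cyclesstr} allows for any strongly $F$-critical subgraph of $G$ containing them. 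Carrying out this finite case analysis forces $G=H$ and thereby transfers the structural description of $H$ given by Theorem~\ref{thm-cyclesstr} directly to $G$, as required.
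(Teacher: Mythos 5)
There is a genuine gap here, and the proof route you propose diverges significantly from the paper's.

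Your opening moves are fine: extracting $\psi$ from the $F$-criticality applied to $G-e$, then taking a minimal $H\supseteq F$ on which $\psi$ fails, correctly produces a proper strongly $F$-critical $H$ to which Theorem~\ref{thm-cyclesstr} applies. The trouble begins with your goal of forcing $G=H$. You are in effect aiming to prove the stronger assertion that every proper $F$-critical plane graph in this setting is \emph{strongly} $F$-critical, and you want to get there by ruling out the coexistence of two incomparable proper strongly $F$-critical subgraphs $H,H'\subseteq G$. But that principle is not established, and its proof is not a ``finite case analysis'' one can wave at: $H$ and $H'$ need not be nested, they may share interior vertices in arbitrary ways, the degree constraints of condition (c) are constraints inside $H$ (or $H'$) rather than inside $G$, and nothing in the structure theorem says the union or overlap of two such subgraphs must violate girth or be ``too big.'' In short, the inductive leverage you would need to make the case analysis close is missing from your setup.

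The paper's argument supplies exactly this leverage by two devices you do not use. First, it runs a minimal counterexample on $|V(G)|$, which gives an induction hypothesis to fall back on. Second, it invokes Lemma~\ref{lemma-crs}: once you have the strongly $F$-critical $G'\subseteq G$ satisfying (a)--(c), and you assume $G$ itself does not satisfy any of them, you find an induced cycle $C\subseteq G'$ bounding a face of $G'$ but not of $G$; the subgraph $H$ of $G$ drawn in the closed disk bounded by $C$ is then $C$-critical. Since $C$ is short (length at most $8$, or $9$ in a degenerate case), the minimality of the counterexample sharply constrains $H$ and forces $\ell(C)=8$ with a chord splitting it into two $5$-faces, whence $|V(G)\setminus V(F)|=1$ and $G$ satisfies (a) after all. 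This ``zoom into a short face-bounding cycle and recurse'' step is what replaces your unestablished incompatibility claim, and it is the step your proposal is missing.
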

\begin{proof}
Suppose for a contradiction that $G$ is a counterexample to Lemma~\ref{lemma-cycles} with the smallest
number of vertices.
Since $G$ is proper, there exists a precoloring $\psi$ of $F$ that does not extend to an $L$-coloring of $G$.
Let $G'\supset F$ be the minimal subgraph of $G$ such that $\psi$ does not extend to $G'$.  Observe that
$G'$ is a proper strongly $F$-critical graph, thus it satisfies one of the conditions (a), (b) or (c).
Since $G$ does not satisfy any of these conditions, there exists an induced cycle $C\subseteq G'$ that bounds a face in $G'$,
but not in $G$.  Furthermore, if $G'$ satisfies the condition (c), then we may assume that $\ell(C)=5$.

Observe that $\ell(C)\le 8$, and $\ell(C)\le 7$ unless $\ell(F)=12$ and $|V(G')\setminus V(F)|=1$.
Let $H$ be the subgraph of $G$ drawn in the closed disk bounded by $C$.
Lemma~\ref{lemma-crs} implies that $H$ is a proper $C$-critical graph.  Since $G$ is the counterexample
to Lemma~\ref{lemma-cycles} with the smallest number of vertices, $C$ is not an induced cycle in $H$.
Since $G$ has girth at least $5$, we conclude that $\ell(C)=8$ and $C$ has a chord $e$ such that $C\cup e$
contains two $5$-cycles $C_1$ and $C_2$.  Repeating the same argument for $C_1$ and $C_2$, we conclude that
$C_1$ and $C_2$ are faces of $H$ and $V(H)=V(C)$.  It follows that $|V(G)\setminus V(F)|=1$, and thus $G$ satisfies (a).
This is a contradiction.
\end{proof}

In this section, we need only the following corollary of Lemma~\ref{lemma-cycles}.

\begin{corollary}\label{cor-cycles}
Let $G$ be a plane graph of girth at least $5$, with the outer face $F$ bounded by a cycle
of length at most $9$.  Let $L$ be a list assignment of lists of size three to the vertices of
$V(G)\setminus V(F)$.  If $G$ is a proper $F$-critical graph, then

\begin{itemize}
\item $\ell(F)\ge 8$ and $F$ has a chord, or
\item $\ell(F)=9$ and $V(G)\setminus V(F)$ consists of a single vertex $v$ adjacent
to three vertices of $F$.
\end{itemize}
\end{corollary}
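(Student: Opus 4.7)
The plan is to proceed by a case analysis on whether the cycle bounding $F$ is induced, which by the girth hypothesis is equivalent to asking whether $F$ has a chord.

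If $F$ has a chord $e$, then together with the two arcs of $F$, the edge $e$ creates two cycles sharing only $e$ whose lengths sum to $\ell(F)+2$. Each has length at least $5$ by the girth condition, and therefore $\ell(F)\ge 8$, which is the first outcome of the corollary.

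If $F$ is induced, I would apply Lemma~\ref{lemma-cycles} directly. The hypothesis $\ell(F)\le 9$ rules out possibilities (b) and (c) of Theorem~\ref{thm-cyclesstr}, which require $\ell(F)\ge 10$ and $\ell(F)=12$ respectively, so (a) must hold. This forces $\ell(F)=9$ and $G-V(F)$ to be a tree on at most $\ell(F)-8=1$ vertex. Since $G$ is proper we have $G\ne F$, so this tree consists of exactly one vertex $v$, and all neighbors of $v$ must lie on $F$.

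It remains to verify that $v$ has exactly three neighbors on $F$. Criticality together with $G\ne F$ ensures that some coloring of $F$ fails to extend to $G$, so the three colors of $L(v)$ must all be blocked by neighbors of $v$; hence $v$ has at least three neighbors. Conversely, any two neighbors of $v$ on $F$ lie at distance at least three apart along $F$, since otherwise $v$ together with the short $F$-arc between them would form a cycle of length at most four, contradicting girth $5$. As the cyclic distances between consecutive neighbors of $v$ around $F$ sum to $9$ and each is at least $3$, $v$ has at most three neighbors. The only substantive ingredient in the whole argument is Lemma~\ref{lemma-cycles}; the rest is routine bookkeeping with the girth constraint on a cycle of length $9$, so I expect no real obstacle.
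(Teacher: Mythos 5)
Your proof is correct and follows the route the paper intends: the corollary is stated as a direct consequence of Lemma~\ref{lemma-cycles}, with the chord case handled by the girth bound, and in the induced case the structure of option (a) combined with criticality (giving $\deg(v)\ge |L(v)|=3$) and the girth-$5$ constraint on the spacing of $v$'s neighbors around the $9$-cycle forces exactly three neighbors. Nothing is missing and no alternative decomposition is needed.
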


Lemma~\ref{lemma-crs} together with Corollary~\ref{cor-cycles} implies that
\claim{cl-face}{if $H$ is an $S$-critical plane graph of girth at least $5$, where
$S$ is a subgraph of the boundary of the outer face of $H$, then any cycle of length at most $7$ in $H$
bounds a face, the open disk bounded by a cycle of length $8$ contains no vertices, and the open disk bounded
by a cycle of length $9$ contains at most one vertex.}

Furthermore, let us recall the following result of Vizing~\cite{vizing1976}:
\begin{theorem}\label{thm-gallai}
Let $G$ be a $2$-connected graph with a list assignment $L$ such that $|L(v)|\ge \deg(v)$ for each
vertex $v\in V(G)$.  Then $G$ is $L$-colorable, unless $G$ is a complete graph or an
odd cycle and the lists assigned to all vertices are the same.
\end{theorem}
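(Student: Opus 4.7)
My plan is to prove Theorem~\ref{thm-gallai} by induction on $|V(G)|$, following the classical approach of Erd\H{o}s, Rubin and Taylor. The ``easy'' direction is immediate: if $G=K_n$ with lists of size $n-1$ all equal to a common set $C$, or if $G$ is an odd cycle with lists of size $2$ all equal to $C$, then $\chi(G)>|C|$, so $G$ is not $L$-colorable. So the real content is the converse: if $G$ is $2$-connected, $|L(v)|\ge\deg(v)$ for every $v$, and $G$ is not an exceptional pair, then $G$ has an $L$-coloring.

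My first step would be to dispose of the case when some vertex $v_{0}$ satisfies $|L(v_0)|>\deg(v_0)$. Since $G$ is connected, I can order the remaining vertices as $v_1,\ldots,v_{n-1}$ so that every $v_i$ has a neighbor in $\{v_{i+1},\ldots,v_{n-1},v_0\}$ (take any spanning tree rooted at $v_0$ and list vertices in reverse BFS order). A greedy coloring in this order works: each $v_i$ with $i\ge 1$ has at most $\deg(v_i)-1$ already-colored neighbors, so at least one color of $L(v_i)$ is free; and $v_0$ has at most $\deg(v_0)<|L(v_0)|$ already-colored neighbors. Thus I may assume $|L(v)|=\deg(v)$ for all $v$. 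If $G$ is a cycle, then $|L(v)|=2$ everywhere; an even cycle is handled by walking around and alternating, and for an odd cycle with two adjacent vertices $u,v$ having $L(u)\ne L(v)$, I can pick a color at $u$ that does not lie in $L(v)$ (or some similar adjustment), then color the rest of the cycle greedily starting from $v$. Hence if $G$ is a cycle and not $L$-colorable, it must be an odd cycle with identical lists, an exception.

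In the remaining case $G$ is $2$-connected, not a cycle, and $|L(v)|=\deg(v)$ everywhere; so $G$ has a vertex of degree at least three. The goal here is to locate two vertices $u,w$ at distance exactly $2$, with a common neighbor $x$, such that (i) some color $c\in L(u)\cap L(w)$ exists and (ii) $G-\{u,w\}$ is connected. Once such a triple is found, set $\varphi(u)=\varphi(w)=c$ and remove $u,w$; the list of $x$ drops by at most one while its degree drops by at least two, so in $G-\{u,w\}$ the vertex $x$ satisfies $|L'(x)|>\deg_{G-\{u,w\}}(x)$, and my first step applied to $G-\{u,w\}$ (which is connected and inherits $|L'(v)|\ge \deg_{G-\{u,w\}}(v)$) finishes the coloring. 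Finding such a $u,w$ when $G$ is not complete is standard: pick a vertex $x$ of degree $\ge 3$ and two non-adjacent neighbors $u,w$, and verify connectivity of $G-\{u,w\}$ using $2$-connectedness together with the existence of paths avoiding $u,w$. When $G=K_n$, any two vertices share common neighbors trivially, so the question reduces to whether some pair of lists meets; if not all lists are identical, one can pick a color $c\in L(u)\setminus L(w)$ for some pair, color $u$ with $c$, and reduce to a $K_{n-1}$ in which some vertex now has a list strictly larger than its degree, falling into the first step.

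The main obstacle is the last case: carefully choosing $u,w$ so that both the common-color condition and the connectivity of $G-\{u,w\}$ are maintained, and justifying that the failure of this choice exactly characterises the exceptional pairs $(K_n,\text{identical lists})$. This requires a little case analysis inside a $2$-connected non-complete graph with a degree-$3$ vertex to produce the two vertices to identify, which is where the proof does its real work.
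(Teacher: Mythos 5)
The paper does not prove Theorem~\ref{thm-gallai}: it is quoted as a known result of Vizing~\cite{vizing1976} (the theorem is also due independently to Erd\H{o}s--Rubin--Taylor and Borodin), so there is no in-paper argument to compare against. Your sketch follows the classical route, and steps~1 (greedy once some vertex has slack, pushing that vertex to the end of a spanning-tree order) and~2 (cycles) are fine.

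The genuine gap is in step~3, and you half-acknowledge it yourself. After picking a degree-$\ge 3$ vertex $x$ with two non-adjacent neighbours $u,w$ such that $G-\{u,w\}$ is connected (this part is the standard Lov\'asz lemma and is routine), nothing forces $L(u)\cap L(w)\neq\emptyset$. For instance in $K_{2,3}$, with $x$ one of the degree-$3$ vertices, the three degree-$2$ neighbours can have pairwise disjoint $2$-lists; yet such an instance must be colourable, so the single ``identify $u$ and $w$'' move cannot be the whole story. Your remark that ``a little case analysis'' fixes this hides the real content: one needs a second reduction that removes the freedom in the lists before the identification trick is applied. The clean way to close the gap is to observe that if any \emph{edge} $ab$ has $L(a)\neq L(b)$, you can pick $c\in L(a)\setminus L(b)$, colour $a$ with $c$ and delete it; since $G$ is $2$-connected, $G-a$ is connected, $b$ retains its full list while its degree drops, and step~1 finishes. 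This collapses the problem to the case where every list equals a fixed set $C$ with $|C|=\deg(v)$ for all $v$, i.e.\ $G$ is $|C|$-regular with a single palette; there the common-colour condition is automatic (any colour serves), and the Lov\'asz identification together with Brooks' theorem gives exactly the stated exceptions $K_{|C|+1}$ and odd cycles. As written, your plan does this differing-lists reduction only in the $K_n$ branch, whereas it is needed globally before you can safely identify $u$ and $w$.
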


This implies the following:
\begin{lemma}\label{lemma-gallai}
Let $G$ be a triangle-free critical graph, $S$ a subgraph of $G$ and $L$ an assignment of
lists to vertices of $V(G)\setminus V(S)$.   Let $H$ be a $2$-connected subgraph of $G$
such that $V(H)\cap V(S)=\emptyset$ and $|L(v)|\ge\deg_G(v)$ for each $v\in V(H)$.
If $G$ is $S$-critical, then $H$ is an induced odd cycle in $G$.
\end{lemma}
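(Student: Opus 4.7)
The plan is to combine $S$-criticality of $G$ with Vizing's theorem (Theorem~\ref{thm-gallai}) applied to the subgraph of $G$ induced by $V(H)$. Because $H$ is $2$-connected, it contains at least one edge $e_0$, and $e_0\in E(G)\setminus E(S)$ since $V(H)\cap V(S)=\emptyset$. Applying $S$-criticality of $G$ to the proper subgraph $G-e_0\supseteq S$, we obtain a coloring $\psi$ of $S$ that extends to an $L$-coloring $\varphi$ of $G-e_0$ but does not extend to any $L$-coloring of $G$.

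Next, set $H':=G[V(H)]$, the subgraph of $G$ induced by $V(H)$. Then $H\subseteq H'$, so $H'$ is also $2$-connected and $|V(H')|\ge 3$. For each $v\in V(H)$ define the restricted list
\[
L'(v)=L(v)\setminus\{\varphi(u):u\in N_G(v)\setminus V(H)\}.
\]
The hypothesis $|L(v)|\ge\deg_G(v)$ yields $|L'(v)|\ge\deg_G(v)-(\deg_G(v)-\deg_{H'}(v))=\deg_{H'}(v)$. If $H'$ admitted an $L'$-coloring $\varphi'$, then combining $\varphi'$ on $V(H)$ with the restriction of $\varphi$ to $V(G)\setminus V(H)$ would yield an $L$-coloring of $G$ extending $\psi$, contradicting the choice of $\psi$. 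Hence $H'$ is not $L'$-colorable.

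By Theorem~\ref{thm-gallai}, $H'$ is either a complete graph or an odd cycle, with all the lists $L'(v)$ equal. The complete-graph case is impossible: since $|V(H')|\ge 3$, it would give $H'\supseteq K_3$, contradicting the triangle-freeness of $G$. So $H'$ is an odd cycle. The final step, which is the one place where a little care is needed, is to deduce $H=H'$: we have $V(H)=V(H')$ by construction, and any proper spanning subgraph of a cycle is disconnected or has a cut vertex, so the $2$-connected subgraph $H\subseteq H'$ must coincide with $H'$. Thus $H$ is an induced odd cycle in $G$, as required. The main obstacle is really just the bookkeeping in the degree inequality $|L'(v)|\ge\deg_{H'}(v)$, which transfers the hypothesis from $G$ to $H'$ so that Vizing's theorem can be applied.
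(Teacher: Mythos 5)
Your proof is correct and follows essentially the same route as the paper's: form $H'=G[V(H)]$, use $S$-criticality to extract a precoloring $\psi$ of $S$ together with an $L$-coloring of the rest of the graph that cannot be pushed onto $V(H)$, pass to the reduced lists $L'$ with $|L'(v)|\ge\deg_{H'}(v)$, invoke Theorem~\ref{thm-gallai}, rule out the complete-graph case by triangle-freeness, and finally note $H=H'$ because no proper spanning subgraph of a cycle is $2$-connected. The only cosmetic difference is that you apply $S$-criticality to $G-e_0$ for an edge $e_0\in E(H)$ and then restrict the resulting coloring to $V(G)\setminus V(H)$, whereas the paper applies it directly to $G-V(H)$; both are valid proper subgraphs containing $S$, so the two variants are interchangeable.
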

\begin{proof}
Let $H'=G[V(H)]$ be the subgraph of $G$ induced by $V(H)$.  Since $G$ is $S$-critical, there
exists a precoloring $\psi$ of $S$ that extends to an $L$-coloring $\varphi$ of $G-V(H)$, but not to
an $L$-coloring of $G$.  Consider the list assignment $L'$ such that for $v\in V(H)$,
$L'(v)=L(v)\setminus C_v$, where $C_v$ is the set of colors of vertices of $G-V(H)$ adjacent to $v$,
according to the coloring $\varphi$.  Observe that $H'$ is $2$-connected, $H'$ is not $L'$-colorable, and
$|L'(v)|\ge \deg_{H'}(v)$ for each $v\in V(H)$.  By Theorem~\ref{thm-gallai}, since $G$ is triangle-free,
$H'$ is an odd cycle.  Furthermore, $H=H'$, since $H$ is $2$-connected.
\end{proof}

Let us now proceed with the proof of the main result.

\begin{proof}[Proof of Theorem~\ref{thm-main}.]
Suppose that $G$ together with lists $L$ and a path $P$ is a counterexample to Theorem~\ref{thm-main}
such that $|V(G)|+|E(G)|$ is minimal, and among such graphs, the path $P$ is the longest possible.
The path $P$ is nonempty, as otherwise we can choose an arbitrary vertex of $F$ as $p_1$.
As $G$ is a proper $P$-critical graph, there exists at least one precoloring of $P$ that does not extend to an $L$-coloring of $G$.
By the minimality of $G$, each vertex of $P$ has degree at least two.
By Lemma~\ref{lemma-crs}, each vertex $v\in V(G)\setminus V(P)$ has degree at least $|L(v)|$.

\begin{lemma}\label{lemma-conn}
The graph $G$ is $2$-connected.
\end{lemma}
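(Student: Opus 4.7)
The proof is by contradiction. Suppose $G$ is not $2$-connected; I will show that some edge of $G$ is redundant for $P$-extensions, contradicting the $P$-criticality of $G$. First, consider the case $G$ is disconnected: let $H$ be a component of $G$ disjoint from $V(P)$. A minimal non-$L$-colorable subgraph $H^*\subseteq H$ would be a proper $\emptyset$-critical graph smaller than $G$, and the inductive hypothesis (Theorem~\ref{thm-main} applied with empty path) forces the impossible conclusion $\ell(\emptyset)=4$; hence every subgraph of $H$ is $L$-colorable. For any $e\in E(H)$, the set of precolorings of $P$ extending to $G-e$ equals the set extending to $G$, which contradicts $P$-criticality.

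Now suppose $G$ is connected and has a cut vertex $v$. Write $G=G_1\cup G_2$ with $V(G_1)\cap V(G_2)=\{v\}$ and both $G_i$ nontrivial. Define $P_2:=vQ_2$ if $v$ is an internal vertex of $P$ (with $Q_2$ the portion of $P$ on the $G_2$-side of $v$), and $P_2:=\{v\}$ otherwise, and analogously $P_1$. In all cases $\ell(P_2)\le 3<4$, and the list assignment $L$ restricted to $G_2$ is valid with respect to $P_2$: the inclusion $I(G_2,P_2,L)\subseteq I(G,P,L)$ makes every bad path in $G_2$ bad in $G$, and the original assumption that no vertex of $P$ is bad (when $\ell(P)\ge 3$) transfers to the vertices of $P_2$ by a short case check. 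By the inductive hypothesis, $G_2$ is not proper $P_2$-critical (otherwise $\ell(P_2)=4$). Iterating---at each step replacing the current subgraph by a strictly smaller proper subgraph with identical $P_2$-extension set, which remains not proper $P_2$-critical by the inductive hypothesis---the process terminates at $P_2$ itself, proving that every proper coloring of $P_2$ extends to $G_2$. If $G_2\supsetneq P_2$, then some edge $e\in E(G_2)\setminus E(P_2)$ lies in $E(G)\setminus E(P)$, and since $G_2-e$ also extends every $P_2$-coloring (its extension set contains that of $G_2$), the precolorings of $P$ extending to $G-e$ coincide with those extending to $G$, contradicting $P$-criticality. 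If instead $G_2=P_2$ (which forces $\ell(P_2)\ge 1$, since $G_2$ is nontrivial), then $\ell(P_1)\le 3$ and the analogous iteration on $G_1$ gives that every $P_1$-coloring extends to $G_1$; combining at $v$ with the trivial extension to $G_2=P_2$, every precoloring of $P$ extends to $G$, again contradicting $P$-criticality.

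The main technical obstacle is the iteration step: at each iteration the hypotheses of Theorem~\ref{thm-main}---the validity of the restricted list assignment and the badness constraints on the vertices of $P_2$---must be preserved, and both reductions rely on the containment $I(G_2,P_2,L)\subseteq I(G,P,L)$. A secondary care point is the separate handling of the boundary case $G_2=P_2$, where there is no non-$P$ edge in $G_2$ available for the redundancy argument and one must instead collapse $G_1$.
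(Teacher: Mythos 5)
Your proof is essentially correct in spirit, but it takes a noticeably longer route than the paper's and has some imprecision in the setup that should be tightened. Here is a comparison and a list of the weak points.

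The paper's proof is quite direct: it applies Lemma~\ref{lemma-crs} to note that $G_i$ is already $P_i$-critical (being a $P_i$-component of $G$), picks the piece with the \emph{shorter} $P_i$ (so $\ell(P_i)\le 2$, which makes the badness hypotheses of Theorem~\ref{thm-main} trivial to verify), and concludes from the inductive hypothesis that $G_i=P_i$. Since this has been preceded by the observation that the minimum degree of $G$ is at least two, $G_i=P_i$ immediately yields a vertex of degree one in $G$, a contradiction. Your proof does not invoke Lemma~\ref{lemma-crs}; instead you re-derive the fact that the $P_2$-extension set of $G_2$ is trivial by iterating to a $P_2$-skeleton, and you then turn this into a redundant-edge contradiction. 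That is valid, but it forgoes the ready-made tool and leads you into an extra case distinction ($G_2\supsetneq P_2$ versus $G_2=P_2$) that the paper avoids.

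Concrete issues. First, the definitions of $P_1,P_2$ are off when $v$ is not an internal vertex of $P$ (i.e.\ $v\notin V(P)$ or $v$ is an endpoint). You set $P_2=\{v\}$ and ``analogously'' $P_1=\{v\}$, but if $P$ lies entirely in $G_1$, the vertices of $P$ then have no lists yet are not in $P_1$, so the restriction of $L$ to $V(G_1)\setminus V(P_1)$ is not a complete list assignment and Theorem~\ref{thm-main} cannot be applied. The fix is the paper's: take $P_i$ to be the subpath $P\cap G_i$ (augmented with $v$ if necessary), so that in particular the piece containing $P$ gets $P$ itself as its precolored path. Second, choosing to work with $\ell(P_2)\le 3$ (rather than the shorter side with $\ell\le 2$) makes the ``short case check'' of validity and badness genuinely non-trivial when $\ell(P_2)=3$: one then needs the condition ``no vertex of $P_2$ is bad,'' and you have to verify that $I(G_2,P_2,L)\subseteq I(G,P,L)$ including $V(P_2)$ on both sides (which does hold, but only because $\ell(P)\ge 3$ forces $V(P_2)\subseteq V(P)\subseteq I(G,P,L)$). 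The paper sidesteps this by working with the shorter side. Third, your boundary case $G_2=P_2$ is actually vacuous: it forces $\ell(P_2)\ge 1$, so $P_2$ has an endpoint distinct from $v$ which then has degree one in $G$, contradicting the already-established minimum degree two; you could therefore delete the entire second branch and the appeal to ``the analogous iteration on $G_1$.'' Finally, the first (disconnected) case also collapses to a one-line observation once you note (via Lemma~\ref{lemma-crs}) that a component $H$ disjoint from $P$ is $\emptyset$-critical, and by minimality cannot be a proper $\emptyset$-critical graph since $\ell(\emptyset)\ne 4$, so $H=\emptyset$, which is impossible.
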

\begin{proof}
Obviously, $G$ is connected.  Suppose now that $v$ is a cut vertex of $G$ and $G_1$ and $G_2$
are induced subgraphs of $G$ such that $G=G_1\cup G_2$, $\{v\}=V(G_1)\cap V(G_2)$ and $|V(G_1)|,|V(G_2)|\ge 2$.
Let $P_i=P\cap G_i$ if $v\in V(P)$ and $P_i=v$ otherwise, for $i\in \{1,2\}$; by Lemma~\ref{lemma-crs},
$G_i$ is $P_i$-critical.  By symmetry, we may assume that $\ell(P_1)\le \ell(P_2)$, and thus $\ell(P_1)\le \ell(P)/2\le 2$.
It follows that $v\not\in I(G_1,P_1,L)$ and $I(G_1,P_1,L)\subseteq I(G,P,L)$, thus the restriction of $L$ to $V(G_1)\setminus V(P_1)$
is a valid list assignment.  If $\ell(P_1)=2$, with say $P_1=p_1p_2p_3$ and $p_3=v$, then $p_1$ is not bad in $G_1$, since it is not
bad in $G$.  By the minimality of $G$, we can apply Theorem~\ref{thm-main} to $G_1$, obtaining $G_1=P_1$.
Since $|V(G_1)|\ge 2$, we conclude that $G$ contains a vertex of degree one, which is a contradiction.
\end{proof}

By Lemma~\ref{lemma-conn}, the outer face $F$ of $G$ is bounded by a cycle.  A {\em chord} of $F$ is an edge in $E(G)\setminus E(F)$
incident with two vertices of $V(F)$.  A {\em $t$-chord} of $F$ is a path $Q=q_0q_1\ldots q_t$ of length $t$ ($t\ge 2$) such
that $q_0\neq q_t$ and $V(Q)\cap V(F)=\{q_0,q_t\}$.  Sometimes, we refer to a chord as a {\em $1$-chord}.

\begin{lemma}\label{lemma-1chord}
The cycle $F$ has no chords.
\end{lemma}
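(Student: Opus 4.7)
Suppose for contradiction that $F$ has a chord $e=uv$. By Lemma~\ref{lemma-conn}, $G$ is $2$-connected, so the two arcs of $F$ between $u$ and $v$ together with $e$ form two cycles $C_1$ and $C_2$, each of length at least $5$ by the girth assumption. Let $G_1$ and $G_2$ be the subgraphs of $G$ drawn in the closed disks bounded by $C_1$ and $C_2$; then $V(G_1)\cap V(G_2)=\{u,v\}$, $E(G_1)\cap E(G_2)=\{e\}$, and $G_1\cup G_2=G$.

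First, by examining the position of $u$ and $v$ on $F$ with respect to the vertices of $P$, and using girth $5$ together with $\ell(P)\le 4$ to rule out the configurations in which $P$ genuinely straddles $e$, we may assume (after relabeling $C_1,C_2$) that $V(P)\subseteq V(C_1)$. By Lemma~\ref{lemma-crs}, $G_1$ is a proper $P$-critical graph and $G_2$ is a $C_2$-critical graph with respect to the restrictions of $L$; moreover validity of $L$ passes to $G_1$ since $I(G_1,P,L)\subseteq I(G,P,L)$, and badness in $G_1$ implies badness in $G$.

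Next, I would apply claim~(\ref{cl-face}) to $G_2$ to constrain its structure. Since $C_2$ has length at least $5$, one of the following holds: either $G_2=C_2$, or $\ell(C_2)\ge 8$ and the open disk bounded by $C_2$ contains additional structure of the very restricted type allowed by (\ref{cl-face}) (length $8$ with empty interior, or length $9$ with a single interior vertex adjacent to three vertices of $C_2$). In the latter, tightly structured, cases the finite combinatorics of $G_2$ together with the inductive hypothesis applied to the strictly smaller graph $G_1$ should allow an explicit construction of an $L$-coloring of $G$ extending the offending precoloring $\psi$ of $P$, contradicting $P$-criticality.

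The principal case is $G_2=C_2$: then $V(C_2)\setminus\{u,v\}$ consists of vertices of the arc $A_2\subseteq F$, each with a list of size at least $2$. The plan here is to extend $P$ to a strictly longer path $P'$ obtained by appending $e$ and a short initial segment of $A_2$, chosen so that $(G,P',L)$ is still a proper $P'$-critical counterexample to Theorem~\ref{thm-main}; this contradicts the maximality of $\ell(P)$ in the choice of counterexample. I expect the main obstacle to be verifying that enlarging $P$ does not destroy validity: the set $I$ grows when vertices are added to $P$ (once $\ell(P')\ge 3$), so a vertex previously safe could become bad. Girth $5$ and the original validity localise where such newly bad vertices can appear, and the extension should be chosen to avoid them. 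If every extension is blocked, the fallback is to construct the required $L$-coloring of $G$ directly: color $G_1$ via the inductive hypothesis (using an appropriate choice of colors at $u$ and $v$), then extend greedily along the linear arc $A_2$, where list sizes at least $2$ together with the absence of bad paths through $A_2$ prevent a dead end.
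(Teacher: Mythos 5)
The central difficulty in this lemma is exactly the case your proposal waves away. You claim that girth~$5$ together with $\ell(P)\le 4$ lets you rule out the configurations in which $P$ ``genuinely straddles'' the chord $e$, so that after relabeling $V(P)\subseteq V(C_1)$. This is false. A chord incident with an interior vertex of $P$, say $u=p_3$ and $v\notin V(P)$, with $p_1,p_2$ on one arc and $p_4,p_5$ on the other, is perfectly compatible with girth~$5$ and $\ell(P)=4$. The paper's proof shows that this split configuration is not only possible but is in fact the residual hard case: it first disposes of $P\subseteq G_i$ (where minimality and Lemma~\ref{lemma-crs} immediately force $G_{3-i}=uv$, a contradiction), then disposes of $|(V(P)\cap V(G_i))\setminus\{u,v\}|\le 1$ by an analogous minimality argument, and is thereby forced into $k=5$, $u=p_3$, $P\cap G_1=p_1p_2p_3$, $P\cap G_2=p_3p_4p_5$.

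Once in that case, the paper's argument has no analogue in your sketch. It sets up paths $P_1=p_1p_2p_3vv_1$ and $P_2=p_5p_4p_3vv_2$ (extending through $v$ and its two neighbors on $F$, which are shown to have lists of size two), applies the induction to conclude $G_1$ and $G_2$ are class~A or class~B, and then combines the colorings $\psi^{(G_1,P_1,L)}$ and $\psi^{(G_2,P_2,L)}$ to show $G$ itself is class~B, contradicting the assumption that $G$ is a counterexample. Your fallback of greedily extending a coloring along a linear arc does not engage with the class~A/B structure at all, which is the only way the statement of Theorem~\ref{thm-main} gives you usable colorability information about the pieces. Separately, your ``principal case $G_2=C_2$'' is misdirected: when $P\subseteq G_1$, minimality already forces $G_2=uv$ (not a cycle), so there is no chord to discuss; the genuine work is entirely in the straddling case you assumed away.
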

\begin{proof}
Suppose that $e=uv$ is a chord of $F$, and let $G_1$ and $G_2$ be the two induced subgraphs of $G$ such
that $G=G_1\cup G_2$, $uv=G_1\cap G_2$ and $G_1,G_2\neq uv$.  Note that $|V(G_1)|,|V(G_2)|>2$.
If $P\subseteq G_1$, then $G_2$ is $uv$-critical by Lemma~\ref{lemma-crs}.
Since $I(G_2,uv,L)\subseteq I(G,P,L)$, the restriction of $L$ to $G_2$ is a valid list assignment.
By the minimality of $G$, we have $G_2=uv$, which is a contradiction.  It follows that $P\not\subseteq G_1$
and by symmetry, $P\not\subseteq G_2$.  Therefore, every chord of $F$ is incident with a vertex of $P$ distinct
from $p_1$ and $p_k$.

Suppose now that say $|(V(P)\cap V(G_1))\setminus \{u,v\}|\le 1$.  In that case, $P_1=(P\cap G_1)+uv$ has length
at most two.  By Lemma~\ref{lemma-crs}, $G_1$ is $P_1$-critical, and since $I(G_1,P_1,L)\subseteq I(G,P,L)$,
we conclude that the restriction of $L$ to $G_1$ is a valid list assignment.  Furthermore, if $\ell(P_1)=2$,
then we may assume that $P_1=p_1uv$, and $p_1$ is not bad in $G_1$ since it is not bad in $G$.
By the minimality of $G$, we conclude that $G_1=P_1$, which is a contradiction, since $|V(G_1)|>2$
and $G$ does not contain a vertex of degree one.

By symmetry, we conclude that $|V(P)\cap V(G_i)\setminus \{u,v\}|\ge 2$ for $i\in\{1,2\}$.
This implies that $k=5$ and $V(P)\cap \{u,v\}=p_3$.  Without loss of generality, $u=p_3$,
$P\cap G_1=p_1p_2p_3$ and $P\cap G_2=p_3p_4p_5$.
Let $v_i$ be the neighbor of $v$ in the facial walk of $F$ belonging to $G_i$, for $i\in\{1,2\}$.
Since every chord of $F$ is incident with a vertex of $P$,
$v$ is not adjacent to a vertex with list of size two except for $v_1$ and $v_2$.

Suppose now that $|L(v_1)|\neq 2$.  By Lemma~\ref{lemma-crs}, $G_1$ is $P'_1$-critical,
where $P'_1=p_1p_2p_3v$. Note that $I(G_1,P_1,L)\setminus I(G,P,L)=\{v\}$, and
$v$ is not bad, as it is not adjacent to a vertex with list of size two in $G_1$.
By the minimality of $G$, we conclude that $G_1=P_1$, which is a contradiction,
since $p_1$ has degree at least two in $G$.

By symmetry, $|L(v_1)|=|L(v_2)|=2$.  Since $L$ is a valid list assignment, $|L(v)|=3$.
By Lemma~\ref{lemma-crs}, $G_1$ is $P_1$-critical and $G_2$ is $P_2$-critical,
where $P_1=p_1p_2p_3vv_1$ and $P_2=p_5p_4p_3vv_2$.  
Note that both $G_1$ and $G_2$ are proper, since $p_1$ and $p_5$ have degree at least two in $G$.
Let $L_i$ be $L$ restricted to $V(G_i)\setminus V(P_i)$, for $i\in \{1,2\}$.  Then $I(G_i,P_i, L_i)\setminus I(G,P,L)=\{v\}$,
and since $v$ is not adjacent to a vertex with list of size two in $G_i$, $v$ is not bad.

By the minimality of $G$, Theorem~\ref{thm-main} holds for $G_1$ and $G_2$.
Since $L$ is a valid list assignment for $G$, by the symmetry between $v_1$ and $v_2$
we may assume that $v_1$ is not adjacent to a vertex of $I(G,P,L)$, and thus
$G_1$ is neither a $5$-face nor class A.  Therefore, $G_1$ is class B.
Let $\psi_1=\psi^{(G_1,P_1,L)}$.  Consider a precoloring $\psi$ of $P$ that is $\{p_1,p_3\}$-different
from $\psi_1$.  By the minimality of $G$, the precoloring of the path $p_3p_4p_5$
given by $\psi$ extends to an $L$-coloring $\varphi_2$ of $G_2$.
The precoloring $\psi'$ of $P_1$ given by $\psi'(p_i)=\psi(p_i)$ for $i\in\{1,2,3\}$,
$\psi'(v)=\varphi_2(v)$ and $\psi'(v_1)\in L(v_1)\setminus \{\varphi(v)\}$ extends to an $L$-coloring $\varphi_1$ of $G_1$,
since $G_1$ is class B and $\psi'$ is $\{p_1,p_3\}$-different from $\psi_1$.  We conclude that
$\psi$ extends to the $L$-coloring $\varphi_1\cup \varphi_2$ of $G$.

Suppose that $G_2$ is class A or B, with $\psi_2=\psi^{(G_2,P_2,L)}$.  Analogically to the previous paragraph, we conclude that
any precoloring $\psi$ of $P$ that is $\{p_5\}$-different from $\psi_2$ extends to
an $L$-coloring of $G$.  It follows that $G$ is class B with $\psi^{(G,P,L)}=\psi_0$, where
$\psi_0(p_1)=\psi_1(p_1)$, $\psi_0(p_3)=\psi_1(p_3)$ and $\psi_0(p_5)=\psi_2(p_5)$ ($\psi_0(p_2)$ and $\psi_0(p_4)$
are arbitrary colors distinct from the colors used on the rest of $P$).  This is a contradiction,
since $G$ is a counterexample.

Since $G_2$ satisfies the conclusion of Theorem~\ref{thm-main}, $G_2$ is a $5$-face and $v_2p_5$ is an edge.
Choose $c'\in L(v_1)\setminus \{\psi_1(v_1)\}$, $c\in L(v)\setminus\{c',\psi_1(p_3)\}$ and $d\in L(v_2)\setminus \{c\}$.
Then, $G$ is class B with $\psi^{(G,P,L)}=\psi_0$, where $\psi_0(p_1)=\psi_1(p_1)$, $\psi_0(p_3)=\psi_1(p_3)$ and $\psi_0(p_5)=d$:
before, we proved that if a precoloring $\psi$ of $P$ is $\{p_1,p_3\}$-different from $\psi_0$, then $\psi$ extends to an $L$-coloring of $G$.
If $\psi(p_3)=\psi_1(p_3)$ and $\psi(p_5)\neq d$, then we can color $v$ by $c$, $v_2$ by $d$ and $v_1$ by $c'$.  The resulting
coloring of $P_1$ is $\{v_1\}$-different from $\psi_1$, thus it extends to $G_1$, giving an $L$-coloring of $G$.
This is a contradiction, since $G$ is a counterexample to Theorem~\ref{thm-main}.
\end{proof}

By Lemma~\ref{lemma-1chord}, $P$ is a subpath of $F$ and by Corollary~\ref{cor-cycles}, $\ell(F)\ge 9$.
Also, $\ell(P)\ge 3$:
\begin{itemize}
\item If $P$ consists of a single vertex $p_1$, then we can add arbitrary neighbor of $p_1$ in $F$ as $p_2$.
Since $p_1p_2$ is longer than $P$ and the assumptions of Theorem~\ref{thm-main} are satisfied,
the choice of $P$ implies that $G=p_1p_2$, which is a contradiction.
\item If $P=p_1p_2$ and $p_2$ is not bad, then let $P'=xp_1p_2$, where $x$ is the neighbor of $p_1$ in $F$.
Otherwise, let $P'=p_1p_2y$, where $y$ is the neighbor of $p_2$ in $F$.  As $p_2$ is bad with respect to $P$, it
follows that $|L(y)|=2$, and since $L$ is a valid assignment, $y$ is not bad with respect to $P'$.
Therefore, the assumptions of Theorem~\ref{thm-main}
are satisfied.  Again, we conclude that $G=P'$, which is a contradiction.
\item Suppose that $\ell(P)=2$ and say $p_1$ is not bad.  Let $x\neq p_2$ be the neighbor of $p_3$ in $F$.
If $|L(x)|=2$, then let $P'=p_1p_2p_3x$.  Otherwise, $p_3$ is not bad, and by the symmetrical argument we can assume that no neighbor of $p_1$
has list of size two, either.  Let $y\neq p_3$ be the neighbor of $x$ in $F$.  If $|L(y)|=2$, then let
$P'=p_1p_2p_3xy$, otherwise let $P'=p_1p_2p_3x$.  Let $L'$ be $L$ restricted to $V(G)\setminus V(P')$.

Observe that $I(G,P',L')\setminus I(G,P,L)\subseteq \{p_1,p_2,p_3,x\}$.  We conclude that if no vertex of $P'$ is
bad, then $L'$ is a valid assignment.  The vertices $p_2$ and $p_3$ are not adjacent to a vertex with list of
size two, thus they are not bad.  If $|L(x)|=2$, then neither $p_1$ nor $x$ are bad with respect to $L$,
and we conclude that neither of them is bad with respect to $L'$.  If $|L(x)|=3$, then
neither $p_1$ nor $x$ are adjacent to a vertex with list of size two in $L'$, thus they are not bad.
Finally, if $y\in V(P')$, then $y$ is not bad with respect to $L'$, since it is not bad with respect to
$L$ and no other vertex of $P'$ is bad.  We conclude that the assumptions of Theorem~\ref{thm-main}
are satisfied, and since $P'$ is longer than $P$, $G$ satisfies the conclusions of Theorem~\ref{thm-main}
with respect to $P'$.  Since the minimum degree of $G$ is at least two, we have $G\neq P'$,
and thus $\ell(P')=4$.  Note that $G$ is not a $5$-face, as $|L(x)|=3$ and $x$ would have degree two.
It follows that $G$ is class A or B.  Let $\psi_0=\psi^{(G,P',L')}$.

For any precoloring $\psi$ of $P=p_1p_2p_3$, we can choose a color $c\in L(y)\setminus \{\psi_0(y)\}$, color
$y$ with $c$ and $x$ by a color in $L(x)\setminus \{\psi(p_3),c\}$, and extend this precoloring (which is $\{y\}$-different
from $\psi_0$) to an $L$-coloring of $G$.  This shows that $G$ cannot be a proper $P$-critical graph, which is a contradiction.
\end{itemize}

Let $D=\{v\in V(G)\setminus V(P) : |L(v)|=2\}\cup\{p_1,p_k\}$.  In following lemmas, we show that short chords are not incident with
vertices of $D$.  Let us first prove a simpler version of this claim.

\begin{lemma}\label{lemma-dchords}
Let $Q=q_0\ldots q_t$ be a $t$-chord of $F$ ($t\le 3$) and let $G_1,G_2\neq Q$ be the subgraphs of $G$ such that $G=G_1\cup G_2$ and $G_1\cap G_2=Q$.
If all vertices of $P$ except for $p_1$ and $p_k$ belong to $V(G_1)$, then $|\{q_0,q_t\}\cap D|\le t-2$.
\end{lemma}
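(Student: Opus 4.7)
The plan is a proof by contradiction: assume $|\{q_0,q_t\}\cap D|\ge t-1$, and derive a contradiction by applying Theorem~\ref{thm-main} inductively to the smaller graph $G_2$, with precolored path $Q$ and list assignment $L'$ equal to the restriction of $L$ to $V(G_2)\setminus V(Q)$. Since $G_1\neq Q$, we have $|V(G_2)|+|E(G_2)|<|V(G)|+|E(G)|$, so the minimality of $G$ licenses the use of the theorem on $G_2$. By Lemma~\ref{lemma-crs}, $G_2$ is $Q$-critical; combined with $G_2\neq Q$, it is a proper $Q$-critical graph. Because $\ell(Q)=t\in\{2,3\}<4$, the conclusion of Theorem~\ref{thm-main} forces $G_2=Q$, contradicting $G_2\neq Q$. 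The bulk of the work is therefore verifying the hypotheses of Theorem~\ref{thm-main} for $(G_2,Q,L')$.

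Our central idea is to translate any bad witness in $G_2$ into a bad witness in $G$, which would contradict the validity of $L$ on $G$ assumed at the start of the proof of Theorem~\ref{thm-main}. Concretely, we will show: $L'$ is valid; for $\ell(Q)=2$, at least one of $q_0,q_t$ is not bad in $G_2$; and for $\ell(Q)=3$, no vertex of $Q$ is bad in $G_2$. When a bad path in $G_2$ ends at a vertex of $I_0(G_2,Q,L')$, the same path works in $G$ since $I_0(G_2,Q,L')\subseteq I_0(G,P,L)\subseteq I(G,P,L)$. When the path ends at an endpoint $q_0$ or $q_t$ of $Q$, the hypothesis $q_0,q_t\in D$ places that endpoint in $I(G,P,L)$ --- either via $|L(q_i)|=2$ (so $q_i\in I_0$), or via $q_i\in\{p_1,p_k\}\subseteq V(P)\subseteq I(G,P,L)$, using $\ell(P)\ge 3$, which was verified just before the lemma. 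Reversing paths handles the symmetric cases where a vertex of $Q$ itself is bad in $G_2$: the reversed endpoint then has list of size $2$ and becomes bad in $G$.

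The main obstacle is the subcase $\ell(Q)=3$ in which a bad path in $G_2$ terminates at an \emph{interior} vertex $q_1$ or $q_2$ of the chord. Such vertices belong to $V(Q)\subseteq I(G_2,Q,L')$ but carry lists of size $3$ and are not on $P$, so they are not in $I(G,P,L)$, and the direct translation breaks down. Our plan is to extend the offending path further along $Q$ to reach an endpoint $q_0$ or $q_t$, which by the assumption being contradicted lies in $D\subseteq I(G,P,L)$, using the girth-$5$ condition on $G$ to prevent the concatenation from collapsing into short cycles that would disrupt the required length-$2$-or-$4$ bad-path pattern with its prescribed list-sizes. This extension is precisely where we need both $q_0$ and $q_t$ in $D$, which explains why the threshold sits at $t-2$ rather than $t-1$. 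Once these verifications are in place, the inductive appeal to Theorem~\ref{thm-main} delivers the contradiction.
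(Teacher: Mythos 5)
Your overall strategy --- contradiction, apply Theorem~\ref{thm-main} inductively to $G_2$ with a precolored path along the chord, translate bad-witness paths from $G_2$ back into $G$ --- matches the paper's approach, and the $\ell(Q)=2$ case works as you describe, as does the $\ell(Q)=3$ case when the terminating vertex of a bad path lies in $I_0(G_2,Q,L')$ or is an endpoint $q_0,q_3\in D\subseteq I(G,P,L)$.

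The gap is exactly the step you flag as the main obstacle: a bad path in $G_2$ terminating at an interior vertex $q_1$ or $q_2$ of $Q$. The ``extend further along $Q$'' idea cannot produce a bad-path witness in $G$. Take the short pattern $v\,v_1\,q_1$ with $|L(v_1)|=2$ and $v_2=q_1$; appending $q_0$ gives $v\,v_1\,q_1\,q_0$, a path of length $3$ whose lists along $v_1,q_1$ are $2,3$, which is one edge short of the length-$4$ pattern $v\,v_1\,v_2\,v_3\,v_4$ (list sizes $2,3,2$ followed by $v_4\in I(G,P,L)$): there is no guaranteed vertex in $I(G,P,L)$ adjacent to $q_0$ to play the role of $v_4$, and truncating to $v\,v_1\,q_1$ fails because $q_1\notin I(G,P,L)$. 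The girth-$5$ hypothesis only excludes degenerate closures; it does not produce the missing vertex or shorten the pattern. The ingredient you are missing is an extremal choice of the chord: the paper picks $Q$, among all offending $t$-chords, with $G_2$ as small as possible. Under that choice, a vertex $v_1$ with $|L(v_1)|=2$ adjacent to an interior $q_i$ would yield a $2$- or $3$-chord (e.g.\ $q_0q_1v_1$) with both $F$-endpoints in $D$ and a strictly smaller $G_2$-side, contradicting minimality; hence interior vertices of $Q$ have no neighbor of list size two in $G_2$ and never terminate a bad path. There is also a secondary gap: you use $Q$ itself as the precolored path, but when $q_0=p_2$ or $q_t=p_{k-1}$ the vertex $p_1$ or $p_k$ lies in $V(G_2)\setminus V(Q)$ with no list under $L$, so the restriction $L'$ is not well-defined; the paper handles this by passing to an extended path $Q'$ that absorbs $p_1$ or $p_k$ (and, symmetrically, a list-size-two neighbor of an endpoint not in $D$) before invoking Theorem~\ref{thm-main}.
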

\begin{proof}
Suppose for a contradiction that $Q$ does not satisfy the conclusions of the lemma, i.e., $|\{q_0,q_t\}\cap D|\ge t-1$.
Subject to these assumptions, choose $Q$ such that $G_2$ is as small as possible.
By Lemma~\ref{lemma-1chord}, we have $t\ge 2$.
Let $Q'$ be the path obtained from $Q$ in the following way:  for $i\in\{0,t\}$,
\begin{itemize}
\item if $q_i=p_2$, then add $p_1$ to $Q'$,
\item if $q_i=p_{k-1}$, then add $p_k$ to $Q'$, and
\item if $q_i\not\in D$ and $q_i$ is adjacent to a vertex $v\in V(F)\cap V(G_2)$ with $|L(v)|=2$,
then add $v$ to $Q'$.
\end{itemize}
Let $Q'=q'_0q'_1\ldots q'_{\ell(Q')}$ and let $L_2$ be $L$ restricted to $V(G_2)\setminus V(Q')$.
Note that $Q'\subseteq G_2$: otherwise say $p_1\in V(G_1)$
and $p_2=q_0$, implying that $p_3\in V(G_2)$.  Since $p_3\in V(G_1)$ by the assumptions of the lemma,
it follows that $p_3=q_t$, and the cycle $q_0q_1\ldots q_t$ of length $t+1$ contradicts the assumption that the girth of $G$ is at least $5$.

Observe that $\ell(Q')\le 3$.  By Lemma~\ref{lemma-1chord} and the minimality of $G_2$, $q'_i$ is not adjacent to a vertex with list of size two
in $L_2$, for $1\le i\le \ell(Q')-1$.  Also, if $x\in\{q'_0,q'_{\ell(Q')}\}$ is adjacent to a vertex with list of size two in $L_2$,
then $x\in I(G,P,L)$.  It follows that $L_2$ is a valid list assignment for $G_2$ with respect to $Q'$
and no vertex of $Q'$ is bad in $G_2$.  By Lemma~\ref{lemma-crs} and the minimality of $G$,
it follows that $G_2=Q'$, which is a contradiction.
\end{proof}

\begin{lemma}\label{lemma-chords}
Let $Q=q_0\ldots q_t$ be a $t$-chord of $F$ ($t\le 4$) and let $G_1,G_2\neq Q$ be the subgraphs of $G$ such that $G=G_1\cup G_2$ and $G_1\cap G_2=Q$.
If all vertices of $P$ except for $p_1$ and $p_k$ belong to $V(G_1)$
and $G_2$ does not consist of a single $5$-face, then
$t\ge 3$ and $|\{q_0,q_t\}\cap D|\le t-3$.
\end{lemma}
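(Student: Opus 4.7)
Suppose for contradiction that the conclusion fails, i.e., either $t=2$, or $t=3$ with $|\{q_0,q_3\}\cap D|\ge 1$, or $t=4$ with $\{q_0,q_4\}\subseteq D$. Subject to this, choose such a $Q$ so that $G_2$ is as small as possible. Following the construction in Lemma~\ref{lemma-dchords}, build a path $Q'\supseteq Q$ in $G_2$ by adding, at each endpoint $q_i\in\{q_0,q_t\}$, at most one vertex: $p_1$ if $q_i=p_2$, $p_k$ if $q_i=p_{k-1}$, or a neighbor $v\in V(F)\cap V(G_2)$ of $q_i$ with $|L(v)|=2$ if $q_i\notin D$.

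A case check on $t$ gives $\ell(Q')\le 4$ in each failing case: for $t=4$ both endpoints already lie in $D$, so no extension occurs; for $t=3$ at least one endpoint lies in $D$, so at most one extension occurs; for $t=2$ at most two extensions are possible. Using Lemma~\ref{lemma-1chord} and the minimality of $G_2$, one verifies exactly as in Lemma~\ref{lemma-dchords} that the restriction $L_2$ of $L$ to $V(G_2)\setminus V(Q')$ is a valid list assignment and that no vertex of $Q'$ is bad in $G_2$.

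By Lemma~\ref{lemma-crs}, $G_2$ is $Q'$-critical, and since $G_1\neq Q$ and $G_2\neq Q$, the graph $G_2$ is strictly smaller than $G$. Applying the minimality of $G$ and Theorem~\ref{thm-main} to $G_2$ leaves three possibilities: (i) $G_2=Q'$; (ii) $\ell(Q')=4$ and $G_2$ consists of a single $5$-face; or (iii) $\ell(Q')=4$ and $G_2$ is class A or class B. Possibility (i) is impossible, for then the interior vertex $q_1$ of $Q$, which satisfies $|L(q_1)|=3$ and hence $\deg_G(q_1)\ge 3$, would have only two neighbors in $G$; possibility (ii) is excluded by the hypothesis.

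The main obstacle is to rule out possibility (iii). The plan is to use the class A or class B structure on $G_2$ to transfer flexibility back through the chord. Extend $P$ along the outer face of $G_1$ to a path $P'$ reaching into $V(Q')$, so that the hypotheses of Theorem~\ref{thm-main} remain satisfied in $G_1$, and apply the inductive hypothesis to $G_1$ with $P'$. From the resulting colorings of $G_1$ one obtains many $L$-colorings whose restrictions to $Q'$ are $\{q'_0,q'_2,q'_4\}$- or $\{q'_0,q'_1,q'_3,q'_4\}$-different from the designated coloring $\psi^{(G_2,Q',L)}$, so by the defining property of class A or B each such restriction extends to $G_2$. Combining the two extensions yields an $L$-coloring of $G$ extending any prescribed precoloring of $P$, contradicting that $G$ is a proper $P$-critical counterexample to Theorem~\ref{thm-main}.
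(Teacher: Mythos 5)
Your setup matches the paper's through the first paragraphs: build $Q'$, verify $L_2$ is valid, apply the inductive form of Theorem~\ref{thm-main} to $G_2$, and conclude $\ell(Q')=4$ with $G_2$ of class~A or~B. Two issues, one minor and one serious.

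Minor: you choose $Q$ so that $G_2$ is as \emph{small} as possible, whereas the paper chooses $G_2$ as \emph{large} as possible. For Lemma~\ref{lemma-dchords} minimality is what ensures no internal vertex of $Q'$ sees a short list, but in Lemma~\ref{lemma-chords} the paper can already invoke Lemma~\ref{lemma-dchords} for that purpose, and it uses \emph{maximality} later (to force $q'_1=p_2$, $q'_0=p_1$, etc.) inside the case analysis. Your minimality choice would therefore need to be replaced or re-justified when you reach the hard cases.

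Serious: the ``main obstacle'' paragraph does not actually close the argument. You cannot, in general, extend $P$ to a path $P'$ reaching into $V(Q')$: $\ell(P)$ may already equal $4$, and even if there is room, $P'$ can touch $Q'$ at only one endpoint, so nothing forces the coloring of $G_1$ to be $\{q'_1,q'_3\}$-different (class~A) or $\{q'_2\}$-different (class~B) from $\psi^{(G_2,Q',L_2)}$. This forcing is exactly the crux. The paper handles it by \emph{modifying} $G_1$: for class~A it attaches two new vertices $x,y$ with carefully chosen size-two lists $\{\psi_2(q'_1),c\}$ and $\{\psi_2(q'_3),c\}$ on a path $q'_1xyq'_3$; for class~B it shrinks the list of $q'_2$ to $L(q'_2)\setminus\{\psi_2(q'_2)\}$. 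Any coloring of the modified graph then automatically restricts on $Q'$ to something that is different from $\psi_2$ at the required vertices. Moreover one must verify the modified graph is strictly smaller than $G$ (the paper does a careful count showing $|V(G'')|+|E(G'')|\le|V(G_1)|+|E(G_1)|+5<|V(G)|+|E(G)|$ by ruling out $|V(G_2)\setminus V(Q)|\le 2$), and when the validity hypothesis (\ref{ass-valid}) for the modified list assignment fails, a further case analysis is needed, using the maximality of $G_2$ to pin down the structure. Finally, your intended contradiction — ``every precoloring of $P$ extends'' — is not what one gets when $\ell(P)=4$; the correct conclusion there is that $G$ itself is class~A or class~B, which is what the paper shows. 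Without the auxiliary-graph construction, the transfer of flexibility from $G_1$ to $G_2$ simply does not go through, so the proposal as written has a genuine gap at its central step.
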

\begin{proof}
Suppose for a contradiction that $Q$ does not satisfy the conclusions of the lemma, i.e., $|\{q_0,q_t\}\cap D|\ge t-2$,
and subject to this choose $Q$ so that $G_2$ is as large as possible.  Let $Q'$ be the path obtained from $Q$ in the 
same way as in the proof of Lemma~\ref{lemma-dchords}.  Again, $Q'\subseteq G_2$, as otherwise
the cycle $q_0q_1\ldots q_t$ is a cycle of length $t+1$ forming the outer face of $G_2$; however by (\ref{cl-face}),
it would follow that $G_2$ is a $5$-face, contradicting the assumptions.

Note that $\ell(Q')\le 4$.
By Lemmas~\ref{lemma-1chord} and \ref{lemma-dchords}, if a vertex $x\in V(Q')$ is adjacent to a vertex with
list of size two in $L_2$, then $x\in \{q'_0,q'_{\ell(Q')}\}\cap I(G,P,L)$.
We conclude that $L_2$ is a valid list assignment for $G_2$ and that no vertex of $Q'$ is bad in $G_2$.
Lemma~\ref{lemma-crs} implies that $G_2$ is a proper $Q'$-critical graph.
By the minimality of $G$, $\ell(Q')=4$ and $G_2$ is class A or B.
Let $\psi_2=\psi^{(G_2,Q', L_2)}$.  Note that $q'_2\not\in V(F)$ and $q'_0,q'_4\in D$.

If $G_2$ is class A, then let $G'$ consist of $G_1$ together with two new vertices $x$ and $y$
and a path $q'_1xyq'_3$, with the list assignment $L'$ given by $L'(v)=L(v)$ for $v\in V(G_1)\setminus V(P)$,
$L'(x)=\{\psi_2(q'_1),c\}$ and $L'(y)=\{\psi_2(q'_3),c\}$, where $c$ is an arbitrary color distinct from
$\psi_2(q'_1)$ and $\psi_2(q'_3)$.  If $G_2$ is class B, then let $G'=G_1$,
$L'(v)=L(v)$ for $v\in V(G_1)\setminus \{q'_2\}$ and $L'(q'_2)=L(q'_2)\setminus \{\psi_2(q'_2)\}$.
Consider any precoloring $\psi$ of $P$ whose restriction to $P\cap G'$ extends to an $L'$-coloring $\varphi$ of $G'$,
and let $\varphi'$ be the restriction of $\psi\cup \varphi$ to $V(Q')$.  If $G_2$ is class A, then
$\varphi'$ is $\{q'_1,q'_3\}$-different from $\psi_2$, and if $G_2$ is class B, then $\varphi'$ is
$\{q'_2\}$-different from $\psi_2$, thus $\varphi'$ extends to an $L_2$-coloring of $G_2$.  Together
with $\varphi$, this gives an $L$-coloring of $G$ extending $\psi$.  Since at least one precoloring of
$P$ does not extend to an $L$-coloring of $G$, we conclude that there exists a precoloring of
$P\cap G'$ that does not extend to an $L'$-coloring of $G'$.  Therefore, a $(P\cap G')$-skeleton $G''$ of $G'$
is a proper $(P\cap G')$-critical graph.  Suppose now that
\claim{ass-valid}{$L'$ is a valid list assignment and no vertex of $P\cap G''$ is bad.}
In order to apply Theorem~\ref{thm-main}, we need to show that
$G''$ is smaller than $G$, i.e., that $|V(G'')|+|E(G'')|\le |V(G_1)|+|E(G_1)|+5 < |V(G)|+|E(G)|$.
This is obvious if $|V(G_2)\setminus V(Q)|\ge 3$.  Since $G_2$ is not a $5$-face, we have $|V(G_2)\setminus V(Q)|\ge 1$
and $t\ge 5-|V(G_2)\setminus V(Q)|$.  If $|V(G_2)\setminus V(Q)|=1$, then $t=4$, $q_0,q_4\in D$ and the vertex
$w\in V(G_2)\setminus V(Q)$ has degree two.  It follows that $|L(w)|=2$, and the path $q_0wq_4$ contradicts
the assumptions of Theorem~\ref{thm-main}.  Similarly, we exclude the case that $|V(G_2)\setminus V(Q)|=2$.

Note that if $\ell(P\cap G'')=4$, then $G''$ does not consist of a single $5$-face, since $F$ does not have chords.
By Theorem~\ref{thm-main} applied to $G''$ with the path $P\cap G'$ and the list assignment $L'$,
we have $\ell(P)=4$, $P\subseteq G'$ and $G''$ is class A or B.  Let $\psi_1=\psi^{(G'',P,L')}$.

If $G''$ is class A, then any precoloring of $P$ that is $\{p_1,p_2,p_4,p_5\}$-different from $\psi_1$
extends to an $L'$-coloring of $G''$, and thus it also extends to an $L'$-coloring of $G'$ and an $L$-coloring
of $G$.  Also, $p_1$ is adjacent to a vertex $w$ such that $|L'(w)|=2$ in $G''$.  Note that $p_1\not\in \{q'_1,q'_3\}$
and by Lemma~\ref{lemma-dchords}, $p_1$ is not adjacent to $q'_2$, thus $w\in V(G)$ and $|L(w)|=2$.  By symmetry,
$p_k$ has a neighbor with list of size two in $G$.  Therefore, $G$ is class A.  Similarly, if $G''$ is class B,
then $G$ is class $B$.  This is a contradiction, and hence the assumption (\ref{ass-valid}) is false.

Let us now distinguish the two cases regarding whether $G_2$ is class A or B with respect to the path
$Q'$ and the list assignment $L_2$:

\begin{itemize}
\item {\em $G_2$ is class A.} Let $z$ be the neighbor of $q'_0$ in $G_2$ with $|L(z)|=2$.

Let us recall that in this case the list assignment $L'$
matches $L$ on $V(G'')\setminus V(P)$ and $G'$ contains two additional vertices $x$ and $y$
with lists of size two.  Let us call a vertex $q'\in\{q'_1,q'_3\}$ {\em problematic} if
either $q'\in I(G'',G'\cap P, L')$, or $|L'(q')|=3$ and $G''$ contains a path
$q'uv$ with $|L'(u)|=2$ and $v\in (I(G'',G'\cap P, L')\setminus\{x,y\})\subseteq I(G,P,L)$.
Since (\ref{ass-valid}) is false, we may assume by symmetry that $q'_1$ is problematic.

By the choice of $Q'$, either $q'_1\in V(P)$ or $|L(q'_1)|=3$.  Suppose first that $q'_1\not\in V(P)$.
If $q'_1\in V(F)$, then $|L(q'_0)|=2$ by the construction of $Q'$, and the path
$zq'_0q'_1uv$ contradicts the assumption that $L$ is a valid list assignment.
If $q'_1\not\in V(F)$, then by Lemma~\ref{lemma-dchords} $uq'_1q'_0$ is not a $2$-chord of $F$,
and thus $u=q'_0$.  In this case, the path $zuv$ contradicts the assumption that $L$ is a valid list assignment.
Therefore, $q'_1\in V(P)$.  The symmetric argument shows that if $q'_3$ is problematic, then $q'_3\in V(P)$.

By symmetry and the construction of $Q'$, we may assume that
$q'_1=p_2$ and $q'_0=p_1$.  Note that $q'_3\not\in V(P)$, as the girth of $G$ is at least five and $q'_3\neq p_5$.
It follows that $q'_3$ is not problematic, and thus $L'$ is a valid list assignment for $G''$
with respect to the path $P'=xp_2\ldots p_k$ and no vertex of this path is bad.

By the minimality of $G$, this implies that $\ell(P')=\ell(P)=4$ and $G''$ is class A or B with respect to $P'$,
with $\psi_0=\psi^{(G'',P', L')}$.  If $G''$ is class A with respect to $P'$,
then $p_5$ is adjacent to a vertex $w$ with $|L(w)|=|L'(w)|=2$.  Furthermore,
$p_1=q'_0$ is adjacent to $z$, which has $|L(z)|=2$.  Let $\psi^{(G,P,L)}$ be a coloring
that matches $\psi_0$ on $p_2p_3p_4p_5$ and satisfies $\psi_2(p_1)\in \{\psi^{(G,P,L)}(p_1),\psi^{(G,P,L)}(p_2)\}$.
Consider a precoloring $\psi$ of $P$.  If $\psi$ is $\{p_2,p_4,p_5\}$-different from $\psi^{(G,P,L)}$, then
$\psi$ is $\{p_2,p_4,p_5\}$-different from $\psi_0$; choose a color of $x$ in $L'(x)\setminus\psi(p_2)$
and extend the resulting precoloring of $P'$ to an $L'$-coloring of $G''$.  This implies that $\psi$ extends
to an $L$-coloring of $G$.  If $\psi$ is not $\{p_2,p_4,p_5\}$-different from $\psi^{(G,P,L)}$, but it is $\{p_1\}$-different, then
$\psi(p_1)\neq\psi_2(p_1)$.  In this case, by Theorem~\ref{thm-main} applied to a $p_2p_3p_4p_5$-skeleton of $G_1$
with list assignment $L$, we conclude that $\psi$ extends to an $L$-coloring $\varphi$ of $G_1$, and since $\varphi\cup\psi$
is $\{p_1\}$-different from $\psi_2$ on $Q'$, $\varphi$ extends to $L_2$-coloring of $G_2$, giving an $L$-coloring of $G$.
We conclude that $G$ is class A.  Analogically, if $G''$ is class B with respect to $P'$, then
$G$ is class B.  This is a contradiction.

\item {\em $G_2$ is class B.}
The vertex $q'_2$ does not have any neighbor in $D$ by Lemma~\ref{lemma-dchords}.
Since (\ref{ass-valid}) is false, $q'_2$ has a neighbor $p\in V(P)\setminus \{p_1,p_k\}$.
As girth of $G$ is at least five, $q'_2$ is adjacent to exactly one vertex of $P$.
Since (\ref{ass-valid}) is false, $G_1$ contains a path $q'_2uv$ with $|L(u)|=3$ and $|L(v)|=2$.
Since $G_2$ was chosen to be as large as possible, we may assume that $u=q'_3$,
and if $q'_4\in V(G_1)$, then $v=q'_4$.

If $\ell(P)=4$ and $q'_2$ is adjacent to $p_3$, then consider precoloring $\psi$ of $P$
that does not extend to an $L$-coloring of $G$.  Choose a color for $q'_2$ from
$L'(q'_2)\setminus\{\psi(p_3)\}$.  Let $H_1,H_2\neq q'_2p_3$ be the subgraphs of $G_1\cup P$ such that
$G_1\cup P=H_1\cup H_2$, $q'_2p_3=H_1\cap H_2$ and $p_1\in V(H_1)$.  By the minimality of $G$,
Theorem~\ref{thm-main} implies that the precoloring of $p_1p_2p_3q'_2$ extends to an $L$-coloring of $H_1$
and the precoloring of $p_5p_4p_3q'_2$ extends to an $L$-coloring of $H_2$, giving an $L'$-coloring of $G'$.
This implies that $\psi$ extends to an $L$-coloring of $G$, which is a contradiction.

We conclude that $p\in \{p_2,p_{k-1}\}$, and by symmetry, we may assume that
$p=p_2$.  The maximality of $G_2$ implies that $q'_1=p_2$ and $q'_0=p_1$.
Note that $L'$ is a valid list assignment with respect to the path $P'=q'_2p_2\ldots p_k$, and no vertex of this
path is bad.  By the minimality of $G$, $\ell(P')=\ell(P)=4$ and $G''$ is class A or B with respect to the path $P'$.
Since $q'_2$ is not adjacent to a vertex with list of size two, we conclude that $G''$ is class B.
It follows that $G$ is class B, with $\psi^{(G,P,L)}$ matching $\psi^{(G'',P',L')}$ on $p_3$ and $p_5$
and $\psi^{(G,P,L)}(p_1)=\psi_2(p_1)$.  This is again a contradiction, finishing the proof of the lemma.
\end{itemize}
\end{proof}

Let $P'=p_1\ldots p_kv_1v_2v_3v_4v_5$ be a subpath of $F$.  As we observed before, $\ell(F)\ge 9$.
Suppose that $k=5$ and $\ell(F)=9$, i.e., $v_5=p_1$.
By Corollary~\ref{cor-cycles}, $G$ contains exactly one vertex $v\not\in V(F)$.
As $p_1$ and $p_5$ are not bad, $v$ must be adjacent to $p_3$, $v_1$ and $v_4$,
$|L(v_1)|=|L(v_4)|=3$ and $|L(v_2)|=|L(v_3)|=2$, i.e., $G$ is the class B graph depicted in Figure~\ref{fig-classAB}
($L$ may differ from the list assignment shown in the figure).  Therefore, we may assume that all the vertices of $P'$ are distinct.

\begin{lemma}\label{lemma-nott}
Exactly one of $|L(v_1)|=2$ and $|L(v_2)|=2$ is satisfied.  Furthermore, if
$\psi$ is a precoloring of $P$ that cannot be extended to an $L$-coloring of $G$,
then $\psi(p_k)\in L(v_1)$.
\end{lemma}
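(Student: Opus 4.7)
The first assertion splits into an easy half (at most one) and a harder half (at least one). For at most one: if $|L(v_1)|=|L(v_2)|=2$, the path $v_2v_1p_k$ has $|L(v_1)|=2$ and $p_k\in V(P)\subseteq I(G,P,L)$ (using $\ell(P)\ge 3$), making $v_2$ bad and violating the validity of $L$.

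For the at-least-one half and the second assertion, my plan is a unified contradiction argument. Assume either (i) $|L(v_1)|=|L(v_2)|=3$, or (ii) some non-extending precoloring $\psi$ of $P$ has $\psi(p_k)\notin L(v_1)$. Pick a non-extending $\psi$ (existing by $P$-criticality of $G$) and define $\psi^*$ on $P^*:=p_1\ldots p_kv_1$ by setting $\psi^*(v_1)$ to any color in $L(v_1)\setminus\{\psi(p_k)\}$; this set has size at least two in both cases (size $\ge 2$ from $|L(v_1)|=3$ in (i); size $|L(v_1)|\ge 2$ in (ii)). The coloring $\psi^*$ still fails to extend to an $L$-coloring of $G$. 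When $\ell(P)\le 3$ so that $\ell(P^*)\le 4$, I will check that $(G,P^*,L|_{V(G)\setminus V(P^*)})$ satisfies the hypotheses of Theorem~\ref{thm-main}: validity is preserved because any newly bad path must pass through a list-2 neighbor of $v_1$ in $F\setminus V(P^*)$, i.e., through $v_2$, which has list 3 in case (i) (and a parallel check handles case (ii)). Since $G$ is also $P^*$-critical ($P$-critical with $P\subseteq P^*$), the minimality of our counterexample (longest path for fixed $|V(G)|+|E(G)|$) applies Theorem~\ref{thm-main} to $(G,P^*,L)$, forcing $G$ to be a $5$-face (excluded as $\ell(F)\ge 9$) or class A or B with respect to $P^*$ with coloring $\psi_0=\psi^{(G,P^*,L)}$. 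Class A is ruled out whenever $v_1$ has no list-2 neighbor outside $V(P^*)$. In either remaining class, the freedom in choosing $\psi^*(v_1)$ lets me arrange $\psi^*(v_1)\neq\psi_0(v_1)$, making $\psi^*$ different from $\psi_0$ at $v_1$ and hence $\{p_1,p_3,v_1\}$- or $\{p_1,p_2,p_4,v_1\}$-different, so $\psi^*$ extends, a contradiction.

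The case $\ell(P)=4$ is handled by a different reduction, since $P$ cannot be extended within the hypothesis: consider $G_{-e}:=G-p_kv_1$ and its $P$-skeleton $G'$. If $\psi(p_k)\notin L(v_1)$ then any $L$-coloring of $G_{-e}$ extending $\psi$ is already an $L$-coloring of $G$; hence $\psi$ fails to extend to $G_{-e}$, and thus to $G'$. By the minimality of $|V(G)|+|E(G)|$, Theorem~\ref{thm-main} applies to $(G',P,L)$: $G'\neq P$ (else $\psi$ extends), $G'$ is not a $5$-face (since $p_1p_5\notin E(G)$ by Lemma~\ref{lemma-1chord}), and $G'$ is not class A ($p_k$'s only potential list-2 neighbor in $G$ is $v_1$, which is absent in $G_{-e}$). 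Hence $G'$ is class B with some $\psi_B$, and $\psi$ matches $\psi_B$ on $\{p_1,p_3,p_5\}$. I then use $P$-criticality of $G$ applied to the subgraph $G_{-e}$ to obtain a distinct precoloring $\psi^0$ that extends to $G_{-e}$ but not to $G$; analyzing extensions shows $\psi^0(p_5)\in L(v_1)$ and $\psi^0(p_5)\neq\psi_B(p_5)=\psi(p_5)$. Combining this with the class B extension property of $G'$, a modification of $\psi$ at $p_5$ together with a suitable choice of $\varphi(v_1)\in L(v_1)$ yields an $L$-coloring of $G$ extending $\psi$, the desired contradiction. The main obstacle will be the validity bookkeeping for the $P^*$-extension when $|L(v_2)|=2$ (new bad paths from $v_3$ through $v_2$ to $v_1$ must be excluded using the chord-free structure of $F$ from Lemmas~\ref{lemma-1chord}, \ref{lemma-dchords}, and \ref{lemma-chords}), and in the $\ell(P)=4$ case the delicate synthesis of the class B structure of $G'$ with a concrete extension construction at the edge $p_kv_1$.
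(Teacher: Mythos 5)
The ``at most one'' half agrees with the paper. For the rest, your route is genuinely different from the paper's, which simply deletes $p_k$: the paper sets $N$ to be the neighbours of $p_k$ other than $p_{k-1}$ (and other than $v_1$ when $\psi(p_k)\notin L(v_1)$), removes $\psi(p_k)$ from the lists of vertices in $N$, and applies the induction to a $(P-p_k)$-skeleton of $G-p_k$; since $\ell(P-p_k)\le 3<4$, that skeleton must equal $P-p_k$, so $\psi$ extends. This handles all of $\ell(P)\in\{3,4\}$, $|L(v_1)|\in\{2,3\}$, and $\psi(p_k)\in L(v_1)$ or not, in one stroke. Your two-pronged approach (lengthen to $P^\ast$ when $\ell(P)=3$, delete the edge $p_kv_1$ when $\ell(P)=4$) has concrete holes.

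First, the $P^\ast$ extension. You claim ``a parallel check handles case~(ii),'' but it does not: take $\ell(P)=3$, $\psi(p_k)\notin L(v_1)$, $|L(v_1)|=3$, $|L(v_2)|=|L(v_3)|=2$ (this is consistent with validity of $L$ as long as, say, $|L(v_4)|=3$ and $v_2$ has no bad path in $(G,P,L)$). After adding $v_1$ to the path, $v_1\in V(P^\ast)\subseteq I(G,P^\ast,L')$, and the path $v_3v_2v_1$ with $|L'(v_2)|=2$ makes $v_3$ bad while $|L'(v_3)|=2$; so $L'$ fails to be a valid list assignment and Theorem~\ref{thm-main} cannot be applied. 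This is precisely the situation where the paper's shrink-the-path reduction avoids putting $v_1$ into $I$.

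Second, the $\ell(P)=4$ reduction. Deleting the edge $p_kv_1$ only gives ``\,$\psi$ fails to extend to $G_{-e}$\,'' under the hypothesis $\psi(p_k)\notin L(v_1)$, i.e.\ only in case~(ii). In case~(i) with $\psi(p_k)\in L(v_1)$ and $|L(v_1)|=|L(v_2)|=3$ (where you still need a contradiction to get ``at least one list has size~$2$''), a coloring of $G_{-e}$ may legitimately assign $v_1$ the color $\psi(p_k)$, so the reduction is vacuous. Moreover, within case~(ii) the synthesis step is unjustified: from $P$-criticality applied to $G_{-e}$ you get a precoloring $\psi^0$ that extends to $G_{-e}$ but not to $G$, and indeed $\psi^0(p_k)\in L(v_1)$; but nothing forces $\psi^0(p_5)\neq\psi_B(p_5)$. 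Since $\psi^0$ need only extend to $G'$ (it could agree with $\psi_B$ on $\{p_1,p_3,p_5\}$ and still extend), the inequality you want can fail, and the subsequent ``modification of $\psi$ at $p_5$'' has no foundation. In short, both prongs need repair, and the cleanest repair is to abandon the extension/edge-deletion and instead delete the vertex $p_k$ and trim the lists, as the paper does.
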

\begin{proof}
Since $p_k$ is not bad, it cannot be the case that $|L(v_1)|=|L(v_2)|=2$.
Let $\psi$ be a precoloring of $P$ that does not extend to an $L$-coloring of $G$.
Suppose that $|L(v_1)|=|L(v_2)|=3$ or $\psi(p_k)\not\in L(v_1)$.
Let $N'$ be the set of neighbors of $p_k$ in $G$.  Let $N=N'\setminus\{p_{k-1},v_1\}$ if
$\psi(p_k)\not\in L(v_1)$ and $N=N'\setminus\{p_{k-1}\}$ otherwise.
Let $L'$ be the list assignment obtained from $L$ by removing $\psi(p_k)$ from the
lists of all vertices in $N$.
The vertices of $N$ form an independent set in $G$.  By Lemma~\ref{lemma-chords} and the assumption that $|L(v_2)|=3$,
if $w$ is a neighbor of a vertex of $N$ and $w\not\in V(P)$, then $|L(w)|=3$.  Similarly, if $w\in V(P)$, then
$w\not\in\{p_1,p_2\}$, and since the girth of $G$ is at least $5$, $w\not\in \{p_3,\ldots, p_{k-1}\}$.
Therefore, $L'$ is a valid list assignment for $G-p_k$ with respect to the path $P-p_k$ and no vertex of $P-p_k$ is bad.
By the minimality of $G$, we can apply Theorem~\ref{thm-main} to a $(P-p_k)$-skeleton of $G-p_k$, and conclude
that $\psi$ can be extended to an $L'$-coloring of $G-p_k$.  Therefore, $\psi$ extends to an $L$-coloring of $G$,
which is a contradiction.
\end{proof}

Let us define a set $X$ of vertices of $G$, depending on the sizes of the lists of vertices $v_1$, \ldots, $v_5$
(we exclude the cases forbidden by Lemma~\ref{lemma-nott} and the assumption that $p_k$ is not bad).  See Figure~\ref{fig-defX}
for an illustration.
\begin{itemize}
\item If $|L(v_1)|=2$, then $|L(v_2)|=3$.  If $|L(v_3)|=3$, then let
$X=\{v_1\}$.
\item If $|L(v_1)|=|L(v_3)|=2$ and $|L(v_2)|=3$, then $|L(v_4)|=3$.
If $|L(v_5)|=3$, then let $X=\{v_1,v_2,v_3\}$, otherwise
let $X=\{v_1,v_2,v_3, v_4\}$.
\item If $|L(v_1)|=3$, then $|L(v_2)|=2$.  If $|L(v_3)|=|L(v_4)|=3$,
then let $X=\{v_1,v_2\}$.
\item If $|L(v_1)|=|L(v_3)|=3$ and $|L(v_2)|=|L(v_4)|=2$, then let
$X=\{v_1,v_2,v_3\}$.
\item If $|L(v_1)|=3$ and $|L(v_2)|=|L(v_3)|=2$, then $|L(v_4)|=3$.
If $|L(v_5)|=3$, then let $X=\{v_1,v_2,v_3\}$, otherwise
let $X=\{v_1,v_2,v_3, v_4\}$.
\end{itemize}
\begin{figure}
\center{\epsfbox{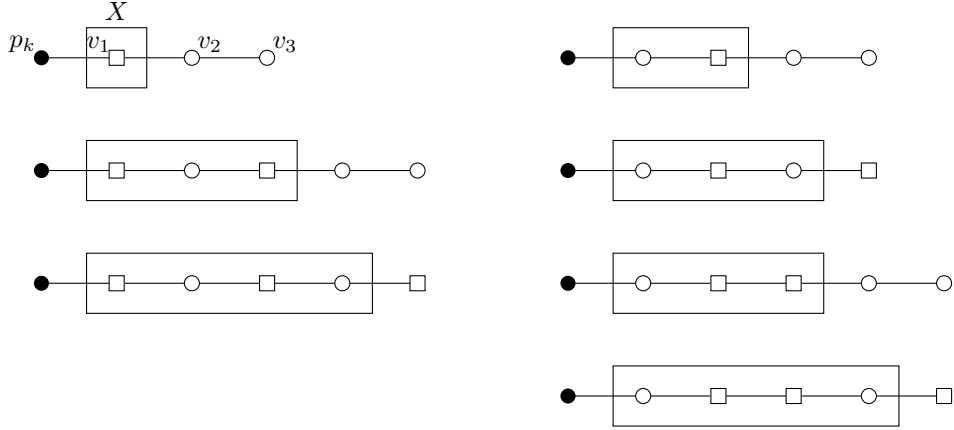}}
\caption{The definition of the set $X$.  Squares denote vertices with list of size two.}
\label{fig-defX}
\end{figure}

Let $m=|X|$.  Let us fix a precoloring $\psi$ of $P$ that does not extend to an $L$-coloring of $G$.
Observe that there exists an $L$-coloring $\varphi=\varphi_{\psi}$ of the path induced by $X$ such that
\begin{itemize}
\item $\varphi(v_1)\neq\psi(p_k)$, and
\item if $|L(v_{m+1})|=2$, then $\varphi(v_m)\not\in L(v_{m+1})$.
\end{itemize}
Furthermore, if $|L(v_{m+1})|=3$, then $v_m$ is the only neighbor of $v_{m+1}$
that belongs to $I(G,P,L)$.

Let $X'=X\cup \{v\in \{v_{m+1},p_k\} : \deg_G(v)=2\}$ and $G'=G-X'$.
Let $N'$ be the set of neighbors of the vertices of $X$ in $V(G)\setminus (X'\cup\{p_k\})$.
Let $N=N'$ if $|L(v_{m+1})|=3$ and $N=N'\setminus \{v_{m+1}\}$ if $|L(v_{m+1})|=2$.

Let $L'$ be the assignment of lists to vertices of $G'$ obtained from $L$ by removing the colors of vertices of $X$ given by $\varphi$
from the lists of their neighbors, i.e., from the lists of vertices in $N$ (or, more precisely, $N'$; however, when
$N\neq N'$, then $N'\setminus N=\{v_{m+1}\}$, the only neighbor of $v_{m+1}$ in $X$ is $v_m$ and $\varphi(v_m)\not\in L(v_{m+1})$).
Additionally, if $p_k\not\in X'$, then we set $L'(p_k)=\{\psi(p_{k-1}),\psi(p_k)\}$,
and if $\ell(P)=4$, then $L'(p_1)=\{\psi(p_1),\psi(p_2)\}$.
Let $P'=p_2p_3p_4$ if $\ell(P)=4$ and $P'=p_1p_2p_3$ if $\ell(P)=3$.
Since $\psi$ does not extend to an $L$-coloring of $G$,
we conclude that $\psi$ (restricted to the path $P'$)
does not extend to an $L'$-coloring of $G'$, either.
Let us remark that $\ell(P')=2$, thus the vertices of $P'$ do not belong to $I(G',P',L')$,
and we only need to show that the list assignment is valid and $p_{k-1}$ is not bad in order to
be able to apply Theorem~\ref{thm-main}.

First, assume that $G$ does not contain the following configurations, see Figure~\ref{fig-obs} for
an illustration:
\begin{description}
\item[Obstruction A.] A path $p_{k-1}xy$ with $x,y\in N$.
\item[Obstruction B.] A path $vxv_{m+1}$, where $v\in X$ and $x\in N$.
\item[Obstruction C.] A path $xyz$ with $x$ adjacent to $p_k$, $y$ to $v_2$ and $z$ to $v_4$,
in case that $v_4\in X$.
\item[Obstruction D.] A vertex in $N$ with two neighbors in $X$.
\end{description}
\begin{figure}
\center{\epsfbox{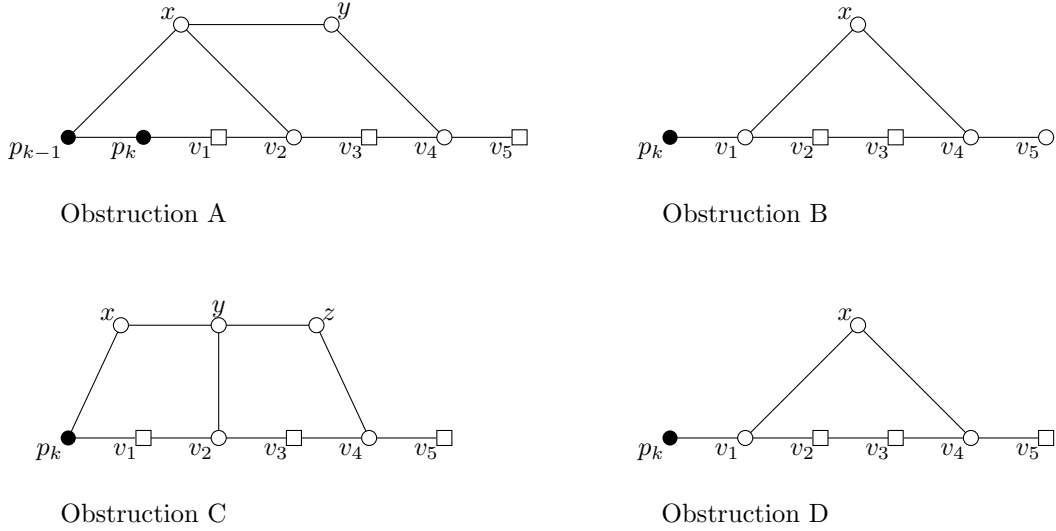}}
\caption{The obstructions.}
\label{fig-obs}
\end{figure}

By the absence of Obstruction D and Lemma~\ref{lemma-1chord}, $L'$ assigns a list of at least
two colors to all vertices of $V(G')\setminus V(P')$.  Since the girth of $G$ is at least $5$
and $|X|\le 4$, the induced subgraph $G[N]$ contains at most one edge.
By Lemma~\ref{lemma-1chord}, if $p_k\in V(G')$, then $p_k$ is not adjacent to any vertex $v$
with $|L(v)|=2$.  By Lemma~\ref{lemma-chords}, no vertex of $N$ is adjacent to a vertex with list of size two in $G$ or to $p_1$ or $p_k$
(for $v_{m+1}$ in case that $|L(v_{m+1})|=3$, the choice of $X$ implies that $m\le 3$ and $v_{m+2}\not\in I(G,P,L)$).
Therefore, $G'$ does not contain a path $u_1u_2u_3$ with $|L'(u_1)|=|L'(u_2)|=|L'(u_3)|=2$.

Suppose now that two vertices $x,y\in N$ are adjacent and there exists a path $xyzuw\subseteq G'$ with
$|L(z)|=3$ and one of the following holds:
\begin{itemize}
\item $|L(u)|=|L(w)|=2$, or
\item $\ell(P)=4$, $u=p_1$ and $|L(w)|=2$, or
\item $\ell(P)=4$, $w=p_1$ and $|L(u)|=2$.
\end{itemize}
Note that $x,y\neq v_{m+1}$ by the absence of Obstruction B.  Let $v_i\in X$ be the neighbor of $x$ and $v_j\in X$ the neighbor of $y$.
If $z\in V(F)$, then let $Q=v_jyz$, otherwise let $Q=v_jyzu$.  Note that $Q$ is a $2$- or $3$-chord.
Let $G_1,G_2\neq Q$ be the subgraphs of $G$ such that $G=G_1\cup G_2$, $Q=G_1\cap G_2$ and $P\subseteq G_1$.
Using Lemma~\ref{lemma-chords}, we conclude that $G_2$ consists of a single $5$-face and $|L(v_j)|=3$.
It follows that $i<j$.  Furthermore, consider the vertices $v_{j+1}$ and $v_{j+2}$ following $v_j$ in the boundary
of $F$.  If $z\in V(F)$, then both $v_{j+1}$ and $v_{j+2}$ have degree two.  If $z\not\in V(F)$, then
$v_{j+1}$ has degree two and $u=v_{j+2}$.  It follows that $|L(v_{j+1})|=2$ and either $|L(v_{j+2})|=2$ or $v_{j+2}=p_1$,
and since $L$ is a valid list assignment, $u=v_{j+2}$ and $w=v_{j+1}$.  Observe that $w\neq p_1$.
The cycle $C=v_iv_{i+1}\ldots v_jyx$ has length at most
$6$, hence $C$ bounds a face by (\ref{cl-face}).
All vertices $v_t$ with $i<t<j$ have degree two, and thus $|L(v_t)|=2$.  As $G$ has girth at least $5$, $i\le j-2$,
hence $|L(v_{j-1})|=2$.  Since $L$ is a valid list assignment and $|L(v_{j+1})|=2$ and $v_{j+2}\in D$, we have
$i=j-2$ and $|L(v_i)|=|L(v_j)|=3$.  Examination of the possible choices of $X$ shows that these conditions
may only be satisfied if $j=m$.  However, in that case $w=v_{m+1}$ has degree two in $G$, and
hence $w\in X'$, contradicting the assumption that $w\in V(G')$.

Suppose now that $G'$ contains a path $u_1u_2u_3u_4u_5$ with $|L'(u_1)|=|L'(u_2)|=|L'(u_4)|=|L'(u_5)|=2$ and $|L'(u_3)|=3$.
Since $L$ is a valid list assignment and $p_k$ has no neighbor with list of size two in $G'$,
we may assume that $u_1,u_2\in N$ and $u_4,u_5\not\in N$.  However, this contradicts the previous paragraph.
We conclude that $L'$ is a valid list assignment for $G'$ with respect to the path $P'$.

Let us now consider a path $p_{k-1}u_2u_3$ with $|L'(u_2)|=|L'(u_3)|=2$.  Note that $u_2,u_3\neq p_k$.
By Lemmas~\ref{lemma-1chord} and \ref{lemma-chords}, we have $u_2,u_3\neq p_1$ and $|L(u_2)|=|L(u_3)|=3$,
and thus $u_2,u_3\in N$.  This is not possible, as $G$ does not contain 
Obstruction A.  Finally, consider a path $p_{k-1}u_2u_3u_4u_5$ with
$|L'(u_2)|=|L'(u_4)|=|L'(u_5)|=2$ and $|L'(u_3)|=3$. By Lemma~\ref{lemma-1chord}, we have either
$u_2=p_k$ or $u_2\in N$.  In the former case, Lemmas~\ref{lemma-1chord} and \ref{lemma-chords}
imply $u_4,u_5\in N$, which is a contradiction, since $G$ does not contain Obstruction C.
It follows that $u_2\in N$, and by Lemma~\ref{lemma-1chord}, $u_2\neq v_{m+1}$.
Let $v_i$ be the neighbor of $u_2$ in $X$.  The $2$-chord $p_{k-1}u_2v_i$
bounds a $5$-face by Lemma~\ref{lemma-chords}, hence $i=2$, $|L(v_1)|=2$ and $|L(v_2)|=3$.  Since
$|X|\le 4$ and $G$ has girth $5$, $u_4$ and $u_5$ cannot both belong to $N$.  Since no vertex of $N$
is adjacent to a vertex with list of size two not belonging to $N$, it follows that $u_4,u_5\not\in N$,
and thus $u_4=p_1$ or $|L(u_4)|=2$.  If $u_3\in V(F)$, then let $Q=p_{k-1}u_2u_3$, otherwise
let $Q=p_{k-1}u_2u_3u_4$.  Lemma~\ref{lemma-chords} applied to $Q$ implies that $Q$ together with a path
in the boundary of $F$ bounds a $5$-face.  However, this contradicts the existence of the edge $u_2v_2$.
We conclude that $p_{k-1}$ is not bad.

Let us summarize the results of the previous few paragraphs:
\begin{itemize}
\item If $G$ does not contain Obstruction D, then $L'$ assigns each vertex of $V(G')\setminus V(P')$ at least two colors.
\item If additionally $G$ does not contain Obstruction B, then $L'$ is a valid list assignment.
\item If additionally $G$ does not contain Obstructions A and C, then $p_{k-1}$ is not bad.
\end{itemize}

By the minimality of $G$, $\psi$ can be extended to an $L'$-coloring of $G'$.
This is a contradiction, and thus $G$ contains at least one of the obstructions.
Note that the obstructions are mutually exclusive, hence $G$ contains exactly one of them.
Furthermore, if $G$ does not contain Obstructions B and D (so that $L'$ is a valid list
assignment), then both $p_{k-1}$ and $p_{k-3}$ are bad.
Let us consider each obstruction separately:

\begin{description}
\item[Obstruction A.] Let us recall that this obstruction consists of {\em a path $p_{k-1}xy$ with $x,y\in N$}. 
By Lemma~\ref{lemma-chords}, $x$ is adjacent
to $v_2$, $|L(v_1)|=2$ and $|L(v_2)|=3$.  It follows that $m=4$ and $y$ is adjacent to $v_4$.
As $v_2v_3v_4yx$ is a $5$-face, $|L(v_3)|=2$.  As $p_k$ is not bad in $G$, $|L(v_4)|=3$.  Since $v_4\in X$,
$|L(v_5)|=2$.  Therefore, $N$ consists of $x$, $y$ and other neighbors of $v_4$.  Observe also that $L'$
is a valid list assignment.

Suppose that $p_1$ is bad in $G'$.
No vertex of $N$ is adjacent to $p_1$ or to a vertex $z$ with $|L(z)|=2$
by Lemma~\ref{lemma-chords} and $p_1$ is not bad in $G$.  Since $p_1$ is bad in $G'$,
there exists a path $p_1z_1z_2xy$ or a path $p_1z_1z_2yx$ with $|L(z_1)|=2$ and $|L(z_2)|=3$.
The former is not possible, as the $2$-chord $v_2xz_2$ (if $z_2\in V(F)$) or the $3$-chord $v_2xz_2z_1$ would bound a $5$-face
by Lemma~\ref{lemma-chords}, contradicting the existence of $y$.  In the latter case, if $z_2\in V(F)$, then
$z_2yv_4$ is a $2$-chord and by Lemma~\ref{lemma-chords}, $z_2yv_4v_5v_6$ is a $5$-face (where $v_6$ is the common neighbor
of $v_5$ and $z_2$ in $F$), $|L(v_6)|=2$, and the path $p_1z_1z_2v_6v_5$ shows that $p_1$ is bad in $G$.
Similarly, if $z_2\not\in V(F)$, then the $3$-chord $z_1z_2yv_4$ together with the path $v_4v_5z_1$ bounds a $5$-face
and the path $p_1z_1v_5$ shows that $p_1$ is bad in $G$.  This is a contradiction, hence $p_1$ is not bad
in $G'$.

If $k=4$, this implies that $\psi$ extends to an $L$-coloring of $G$.  Therefore, $k=5$.
Suppose now that a vertex $v\in N$ is adjacent to $p_2$.  The corresponding $2$-chord bounds
a $5$-face by Lemma~\ref{lemma-chords}, which excludes the case $v=x$.  If $v\neq y$ were a neighbor of $v_4$,
then $p_2p_3p_4xyv_4v$ would be a $7$-face by (\ref{cl-face}), implying that $y$ has degree two.
This is a contradiction, thus $v=y$.  In this case, Lemma~\ref{lemma-chords} implies that $v_5$ is adjacent to $p_1$,
and by (\ref{cl-face}) $G$ is the class A graph depicted in Figure~\ref{fig-classAB}
($L$ may differ from the list assignment shown in the figure).
Therefore, no vertex of $N$ is adjacent to $p_2$.

Suppose that $v\in N$ is adjacent to $p_3$.
As the girth of $G$ is at least $5$, $v\neq x,y$, thus $v$ is a neighbor of $v_4$ distinct from $y$. 
However, then $p_3p_4p_5v_1v_2v_3v_4v$ would be a separating $8$-cycle, which contradicts
(\ref{cl-face}).  Similarly, we conclude that the only neighbor of $p_4$ in $N$ is $x$.

Let $c_4\in L(v_4)\setminus L(v_5)$ and $c_3\in L(v_3)\setminus\{c_4\}$ be chosen arbitrarily.
Observe that we may assume that $\varphi_{\psi}(v_4)=c_4$ and $\varphi_{\psi}(v_3)=c_3$, independently on the precoloring $\psi$ of $P$.
Let $P_2=p_1p_2p_3p_4x$ and $L_2$ be the list assignment for the vertices of $V(G')\setminus V(P_2)$ obtained from $L$ by
removing $c_4$ from the lists of neighbors of $v_4$.  Let us remark that $L_2$ is $L'$ restricted to $V(G')\setminus V(P_2)$,
thus $L_2$ is a valid list assignment.  Furthermore, the list assignment $L_2$ is independent
on the choice of $\psi$.

We have shown that $p_1$ is not bad and $p_2$, $p_3$ and $p_4$ are
not adjacent to a vertex with list of size two, thus they are not bad, either.  Finally, $x$ has list
of size two in the valid list assignment $L'$, thus $x$ is not bad.  We conclude that a $P_2$-skeleton $G_2$ of $G'$
satisfies assumptions of Theorem~\ref{thm-main}.  Since $x$ is not adjacent to $p_1$, $G_2$ is class A or B.

If $G_2$ is class A, then let $J=\{p_1,p_2,p_4\}$.  Note that $p_1$ is adjacent to a vertex $z$ in $G_2$ such that $|L_2(z)|=2$
and $z_2\not\in N$, thus $|L(z)|=2$, and $p_5$ is adjacent to $v_1$ in $G$, which has $|L(v_1)|=2$.
If $G_2$ is class B, then let $J=\{p_1,p_3\}$.  Given a precoloring $\psi'$ of $P$ that is $J$-different from
$\psi^{(G',P_2,L_2)}$, we color $X$ according to $\varphi'=\varphi_{\psi'}$ and choose a color for $x$ from
$L(x)\setminus\{\psi'(p_4),\varphi'(v_2)\}$.  This precoloring of $P_2$ extends to an $L_2$-coloring of $G_2$, giving an $L$-coloring of $G$.

Choose $c_2\in L(v_2)\setminus L(v_3)$ and $c_1\in L(v_1)\setminus\{c_2\}$.
Suppose now that $\psi'$ is not necessarily $J$-different from $\psi^{(G',P_2,L_2)}$, but that $\psi'(p_5)\neq c_1$.
Consider the graph $G_3=G-\{p_5,v_1,v_2\}$ with list assignment $L''$ obtained
from $L$ by removing $c_2$ from the list of $x$.  Observe that this list assignment is valid and that
no vertex of $P_3=p_1p_2p_3p_4$ is bad, thus $\psi$ extends to an $L_3$-coloring of $G_3$.
We extend this coloring to $G$ by coloring $v_1$ by $c_1$ and $v_2$ by $c_2$.
It follows that $G$ is class A or B, with $\psi^{(G,P,L)}$ matching $\psi^{(G',P_2,L_2)}$
on $p_1$, $p_2$ and $p_3$, $\psi^{(G,P,L)}(p_5)=c_1$ and $\psi^{(G,P,L)}(p_4)$ chosen so that $\psi^{(G',P_2,L_2)}(p_4)\in \{\psi^{(G,P,L)}(p_4),\psi^{(G,P,L)}(p_5)\}$.

\item[Obstruction B.] That is, $G$ contains {\em a path $vxv_{m+1}$, where $v\in X$ and $x\in N$}.
By Lemma~\ref{lemma-chords}, $|L(v)|=|L(v_{m+1})|=3$,
$v=v_{m-2}$ and $|L(v_{m-1})|=|L(v_m)|=2$.  Since $v_{m+1}\not\in X$, the inspection of the
choice of $X$ shows that $m=3$ and $|L(v_{m+2})|=3$.

By the criticality of $G$, we have $L(v_2)\cap L(v_3)\neq \emptyset$.  If $L(v_2)=L(v_3)$, then $S_2=S_3=L(v_2)$,
otherwise let $S_2=L(v_2)\setminus L(v_3)$ and $S_3=L(v_3)\setminus L(v_2)$.
Let $G_2=G-\{v_2,v_3\}$, with the list assignment $L_2$ such that $L_2(u)=L(u)$ for $u\not\in \{v_1, v_4\}$
and $L_2(v_1)\subseteq L(v_1)$ and $L_2(v_4)\subseteq L(v_4)$ be lists of size two chosen as follows:
\begin{itemize}
\item If $|S_2\cap L(v_1)|\le 1$, then choose $L_2(v_1)$ disjoint from $S_1$, and choose $L_2(v_4)$ arbitrarily.
\item If $|S_3\cap L(v_4)|\le 1$, then choose $L_2(v_4)$ disjoint from $S_2$, and choose $L_1(v_4)$ arbitrarily.
\item Otherwise, $L(v_2)=L(v_3)=\{a,b\}\subseteq L(v_1)\cap L(v_4)$.  Set $L_2(v_1)=L(v_1)\setminus \{b\}$
and $L_2(v_4)=L(v_4)\setminus \{a\}$.
\end{itemize}
Observe that any $L_2$-coloring of $v_1$ and $v_4$ extends to an $L$-coloring of $v_2$ and $v_3$,
thus any precoloring of $P$ that extends to an $L_2$-coloring of $G_2$ also extends to an $L$-coloring of $G$.
By Lemma~\ref{lemma-1chord}, $L_2$ is a valid list assignment, and no vertex of $P$ other than $p_1$ or $p_k$ is
bad.  If $p_1$ or $p_k$ were bad, then there would exist a path $v_1uwy\subseteq G_2$ with
$|L(u)|=3$, $|L(w)|=2$ and either $y=p_1$ or $|L(y)|=2$.  However, the $2$-chord $v_1uw$ would contradict
Lemma~\ref{lemma-chords}.

By the minimality of $G$, we can apply Theorem~\ref{thm-main} to a $P$-skeleton of $G_2$.  Since $\psi$
does not extend to an $L_2$-coloring of $G_2$, we conclude that $\ell(P)=4$ and $G_2$ is class A or B.
If $G_2$ is class B, then $G$ is class B as well.
If $G$ is class A, then $p_1$ is adjacent to a vertex $v$ such that $|L_2(v)|=2$.  By Lemma~\ref{lemma-1chord},
$v\not\in\{v_1,v_4\}$, and thus $|L(v)|=2$.  Furthermore, there exists a coloring
$\psi_R=\psi^{(G_2,P,L_2)}$ of $P$ and a set $R=\{p_1,p_2,p_4,p_5\}$ such that any precoloring $\psi'$ of $P$ 
that is $R$-different from $\psi_R$ extends to an $L$-coloring of $G$.  Let us remark that $G$ is not class A,
since $p_5$ is not adjacent to a vertex with list of size two.  We postpone the discussion of this case for later.

\item[Obstruction C.] Let us recall that Obstruction C consists of {\em a path $xyz$ with $x$ adjacent to $p_k$, $y$ to $v_2$ and $z$ to $v_4$,
and $v_4\in X$}.  As $G$ does not contain separating $5$-cycles, $|L(v_1)|=|L(v_3)|=2$.  Since $v_4\in X$, we have $m=4$,
and the inspection of the choice of $X$ shows that $|L(v_2)|=|L(v_4)|=3$ and $|L(v_5)|=2$.
There is no edge other than $p_kx$ between $\{x, y,z\}$ and $V(P)$---the only cases that are
not excluded by Lemma~\ref{lemma-chords} and the assumption that the girth of $G$ is at least $5$
are the following:
\begin{itemize}
\item $y$ adjacent to $p_3$, but then $G$ would contain a separating cycle $p_3\ldots p_kv_1v_2y$ of length
at most $6$, contrary to (\ref{cl-face}).
\item $z$ adjacent to $p_i\in\{p_2, p_3,\ldots, p_k\}$, but then $p_ip_{i+1}\ldots p_kxyz$ would bound a face
of length at most $7$ by (\ref{cl-face}), implying that $x$ has degree two, which is a contradiction.
\end{itemize}
By Lemma~\ref{lemma-nott}, $L(v_1)=\{\psi(p_k), c_1\}$ for some color $c_1$.  Suppose first that
$L(v_2)\neq L(v_3)\cup\{c_1\}$.  We choose a color $c_2\in L(v_2)\setminus (L(v_3)\cup\{c_1\})$. 
Let $G_2=G-\{v_1,v_2,v_3\}$ and $L_2$ be the list assignment such that $L_2(y)=L(y)\setminus \{c_2\}$,
$L_2(p_k)=\{\psi(p_{k-1}),\psi(p_k)\}$, $L_2(p_1)=\{\psi(p_1),\psi(p_2)\}$ and $L_2(v)=L(v)$ for any other vertex $v$.
Observe that $L_2$ is a valid list assignment for $G_2$ with respect to the path $p_2\ldots p_{k-1}$
and $p_{k-1}$ is not bad.  By the minimality of $G$, $\psi$ extends to an $L_2$-coloring of $G_2$, giving an $L$-coloring of $G$,
which is a contradiction.  Therefore, $L(v_2)=L(v_3)\cup\{c_1\}$, and thus $\{\psi(p_k)\}=L(v_1)\setminus (L(v_2)\setminus L(v_3))$.
This implies that any precoloring $\psi'$ of $P$ with $\psi'(p_k)\neq \psi(p_k)$ extends to an $L$-coloring of $G$.

Let $G_3=G-\{v_1,v_2,v_3,y\}$ and choose $c\in L(y)\setminus L(v_3)$.
Let $N_3$ be the set of neighbors of $y$ in $G$, excluding $v_2$.
Let $L_3$ be the list assignment for $V(G_3)\setminus V(P)$ obtained from $L$ by removing $c$ from the lists of vertices in $N_3$.
The vertices of $N_3$ form an independent set.  As we observed before, $x$ is not adjacent to a vertex
of $P$ other than $p_k$ and $z$ is not adjacent to any vertex of $P$.  A vertex $v\in N_3\setminus\{x,z\}$ is not adjacent to $p_1$
by Lemma~\ref{lemma-chords}, and $v$ is not adjacent to $p_i$ with $i\ge 2$, since otherwise the open disk bounded by
$p_i\ldots p_kv_1v_2yv$ would contain the vertex $x$, contrary to (\ref{cl-face}).
By Lemma~\ref{lemma-chords}, no vertex of $N_3$ is adjacent to a vertex $w$ with $|L(w)|=2$
and there does not exist a path $xw_1w_2$ with $|L(w_2)|=2$.
It follows that $L_3$ is a valid list assignment for $G_3$ and no vertex of $P$ is bad.
Note that $\psi$ does not extend to an $L_3$-coloring of $G_3$, hence
by the minimality of $G$, we conclude that $\ell(P)=4$ and $G_3$ is class A or B,
with $\psi_3=\psi^{(G_3,P,L_3)}$.  If $G_3$ is class A, then let $J=\{p_1,p_2,p_4\}$;
in this case, $p_1$ has a neighbor $v$ with $|L_3(v)|=2$, and since no vertex of $N_3$ is
adjacent to $p_1$, $|L(v)|=2$.  Furthermore, $p_5$ is adjacent to the vertex $v_1$ with
list of size two.  If $G_3$ is class B, then let $J=\{p_1,p_3\}$.

Consider a precoloring $\psi'$ of $P$ with $\psi'(p_5)=\psi(p_5)$, such that $\psi'$ is $J$-different from $\psi_3$.
This precoloring extends to an $L_3$-coloring of $G_3$.  Furthermore, it also extends
to an $L$-coloring of $G$:  We color $y$ by $c$, $v_1$ by $c_1$ and $v_3$ by a color $c_3\in L(v_3)$ different from the color
of $v_4$. Observe that $c=c_1$ or $c\not\in L(v_2)$, thus we can color $v_2$ by a color in $L(v_2)\setminus\{c_1,c_3\}$.
Let $\psi^{(G,P,L)}$ match $\psi_3$ on $p_1$, $p_2$ and $p_3$, $\psi^{(G,P,L)}(p_5)=\psi(p_5)$ and choose
$\psi^{(G,P,L)}(p_4)$ so that $\psi_3(p_4)\in\{\psi^{(G,P,L)}(p_4),\psi^{(G,P,L)}(p_5)\}$.
We conclude that if $G_3$ is class A, then $G$ is class A, and if $G_3$ is class B, then $G$ is class B.

\item[Obstruction D.] I.e., {\em a vertex $x\in N$ has two neighbors $v_i,v_j\in X$}.  Assume that $i<j$.
As $G$ has girth at least $5$ and $|X|\le 4$, $i=1$ and $j=4$, $|L(v_1)|=|L(v_4)|=3$ and $|L(v_2)|=|L(v_3)|=2$. 
The inspection of the choice of $X$ implies that $|L(v_5)|=2$.  By Lemma~\ref{lemma-chords}, $x$ is not adjacent to $p_1$ and $p_2$.
Since the girth of $G$ is at least $5$, $x$ is not adjacent to $p_k$ and $p_{k-1}$.

Suppose that $x$ is not adjacent to $p_3$.  Let us recall that $\varphi$ is a coloring of $X$, $G'=G-X'$ and
$L'$ is the list assignment obtained from $L$ by removing the colors of
vertices of $X$ from the lists of their neighbors and altering the list of $p_k$.  Choose a color $c\in L(x)\setminus\{\varphi(v_1),\varphi(v_4)\}$.
Let $G_2=G'-x$ and let $L_2$ be the assignment obtained from $L'$ by removing $c$ from the lists of neighbors of $x$.
Let $N_2$ be the set of vertices of $V(G_2)\setminus (V(P)\cup \{v_5\})$ that are adjacent to $v_1$, $x$ or $v_4$ in $G$.
Each vertex in $N_2$ is adjacent to only one of $v_i$, $x$ or $v_j$, as $G$ has girth at least $5$.
Furthermore, the vertices in $N_2$ form an independent set---if vertices $z_1,z_2\in N_2$ were adjacent, then, since
the girth of $G$ is at least $5$, say $z_1$ would be adjacent to $v_1$ and $z_2$ to $v_4$.  However, by (\ref{cl-face})
$v_1xv_4z_2z_1$ would bound a face and $x$ would have degree two.  Similarly, no vertex of $N_2$ is adjacent to $p_k$ or $v_5$.
By Lemma~\ref{lemma-chords}, for any $v\in N_2$ we have that $|L(v)|=3$ and that $v$ has no neigbor in $D$.
We conclude that $L_2$ is a valid list assignment to $G_2$ with respect to the path $P'$.

If $\psi$ extended to an $L_2$-coloring of $G_2$, then it would also extend to an $L$-coloring of $G$, hence this is not the case.
By the minimality of $G$, we can apply Theorem~\ref{thm-main} to a $P'$-skeleton of $G_2$, and we conclude that
$p_{k-1}$ is bad in $G_2$ with the list assignment $L_2$.
Since $N_2\cup \{p_k\}$ forms an independent set and no vertex of this set is adjacent to another vertex with list of size two,
it follows that there exists a path $p_{k-1}z_1z_2z_3\subseteq G_2$ with $z_1\in N_2\cup \{p_k\}$, $|L(z_2)|=3$ and either
$z_3=p_1$ or $|L(z_3)|=2$.  However, this contradicts Lemma~\ref{lemma-chords}.

Therefore, $x$ is adjacent to $p_3$.  Since $x$ is not adjacent to $p_{k-1}$, it follows that $\ell(P)=4$.
By (\ref{cl-face}), $p_3p_4p_5v_1x$ is a $5$-face.
Suppose that $L(v_2)\neq L(v_3)$ or $L(v_1)\neq L(v_2)\cup\{\psi(p_5)\}$.  Then there exists a color
$c_1\in L(v_1)\setminus\{\psi(p_5)\}$ such that for any $c_4\in L(v_4)$, the path $v_1v_2v_3v_4$ can be $L$-colored so that $v_1$ has color $c_1$
and $v_4$ has color $c_4$.  Let $G_3=G-\{p_4, p_5, v_1,v_2,v_3\}$ and let $L_3$ be the list assignment obtained
from $L$ by removing $c_1$ from the list of $x$, and setting $L_3(p_1)=\{\psi(p_1),\psi(p_2)\}$.
By Lemma~\ref{lemma-chords}, $x$ is not adjacent to $p_1$ or a vertex $v$ with $|L(v)|=2$, hence $L_3$ is a valid list assignment
for $G_3$ with respect to the path $p_2p_3$.  By the minimality of $G$, the precoloring of $p_2p_3$ given by $\psi$ extends to an
$L_3$-coloring of $G_3$, and further to an $L$-coloring of $G$, which is a contradiction.
Therefore, $L(v_1)=\{\psi(p_5),c_2,c_3\}$ and $L(v_2)=L(v_3)=\{c_2,c_3\}$ for some colors $c_2$ and $c_3$,
and $\{\psi(p_5)\}=L(v_1)\setminus L(v_2)$.  It follows that any precoloring $\psi'$ of $P$ that is $\{p_5\}$-different
from $\psi$ extends to an $L$-coloring of $G$.

Furthermore, $L(x)=\{\psi(p_3),c_2,c_3\}$, as 
if say $c_2\not\in L(x)$, then we could instead define $L_3(x)=L(x)\setminus \{c_3\}$,
and if $\psi(p_3)\not\in L(x)$, then we could define $L_3(x)=(L(x)\setminus \{c_2,c_3\})\cup \{\psi(p_3)\}$,
obtaining a contradiction in the same way.  Therefore, $\{\psi(p_3)\}=L(x)\setminus L(v_2)$, and
any precoloring $\psi'$ of $P$ that is $\{p_3\}$-different
from $\psi$ extends to an $L$-coloring of $G$.  Let $\psi_R=\psi$ and $R=\{p_3,p_5\}$.  
\end{description}

We proved that $\ell(P)=4$.  Furthermore, we proved that $G$ does not contain Obstructions A and C,
and if $G$ contains Obstruction B or D, there exists a set $R\subseteq V(P)$ and coloring $\psi_R$ of $P$
such that $p_5\in R$, $\{p_3,p_4\}\cap R\neq\emptyset$, any precoloring $\psi'$ of $P$ that is
$R$-different from $\psi_R$ extends to an $L$-coloring of $G$, and if $p_3\not\in R$, then $p_1$
is adjacent to a vertex with list of size two.

By symmetry of the path $P$, there exists a set $S\subseteq V(P)$ and coloring $\psi_S$ of $P$
such that $p_1\in S$, $\{p_2,p_3\}\cap S\neq\emptyset$, any precoloring $\psi'$ of $P$ that is
$S$-different from $\psi_S$ extends to an $L$-coloring of $G$, and if $p_3\not\in S$, then $p_5$
is adjacent to a vertex with list of size two.

If $p_3\in R$, then $G$ is class B, with $\psi^{(G,P,L)}$ matching $\psi_R$ on $p_3$ and $p_5$
and $\psi^{(G,P,L)}(p_1)=\psi_S(p_1)$.  Symmetrically, if $p_3\in S$, then $G$ is class B.
If $p_3\not\in R\cup S$, then $G$ is class A, with $\psi^{(G,P,L)}$ matching $\psi_R$ on $p_4$ and $p_5$
and $\psi_S$ on $p_1$ and $p_2$.  This is a contradiction.
\end{proof}

\section{Critical graphs with outer face of length $12$}\label{sec-cr12}

Theorem~\ref{thm-cyclesstr} (and Lemma~\ref{lemma-cycles}) provides a characterization
of the plane $F$-critical graphs of girth $5$, where $F$ is the
outer face of length at most $12$.  If $\ell(F)\le 11$, the complete
list of $F$-critical graphs is provided, however for $\ell(F)=12$,
only a necessary condition (every second vertex of $F$ is a $2$-vertex
incident with a $5$-face) is given for one subclass of the critical graphs.
Here, we show that this subclass in fact contains only one graph.

\begin{lemma}\label{lemma-cyc12}
Let $G$ be a plane graph of girth at least $5$, with the outer face $F$ bounded by an induced cycle
of length most $12$.  Furthermore, suppose that every second vertex of $F$ has degree two and is incident with a $5$-face.
Let $L$ be a list assignment of lists of size three to the vertices of $V(G)\setminus V(F)$.  
If $G$ is proper $F$-critical, then $G$ is isomorphic to the graph in Figure~\ref{fig-crit12}.
\end{lemma}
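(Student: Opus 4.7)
My plan is to exploit the $F$-criticality by fixing a precoloring of $F$ that fails to extend, reducing lists on the interior, and then contraposing Theorem~\ref{thm-mainsim} on $G-V(F)$; this produces a concrete obstruction whose analysis, combined with the face-structure constraints already derived, forces $G$ to be the unique graph depicted in Figure~\ref{fig-crit12}.

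First, I would argue $\ell(F)=12$: by Lemma~\ref{lemma-cycles}, $G$ satisfies one of (a), (b), (c) of Theorem~\ref{thm-cyclesstr}, and only case (c) is compatible with the hypothesis that every second vertex of $F$ has degree two and lies on a $5$-face, which forces $\ell(F)=12$. I would then label $F = w_1 u_1 w_2 u_2 \cdots w_6 u_6$ with $\deg(u_i)=2$ and write the $5$-face at $u_i$ as $w_i u_i w_{i+1} a_i b_i$ (indices mod~$6$), so that $a_i b_i, a_i w_{i+1}, b_i w_i \in E(G)$ and $a_i, b_i \notin V(F)$ since $F$ is induced.

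Next, let $\psi$ be a precoloring of $F$ that does not extend to $G$. Since each $u_i$ has only its two $F$-neighbors $w_i, w_{i+1}$, it can always be colored last from a list of size three; hence the failure of $\psi$ to extend is equivalent to the statement that the reduced list assignment $L'(v) := L(v) \setminus \{\psi(w) : w \in N(v) \cap V(F)\}$ on $G^* := G - V(F)$ admits no proper coloring. Using girth~$5$ together with claim~(\ref{cl-face}), each $a_i$ and $b_i$ is adjacent to exactly one of the $w_j$, so $|L'(a_i)|=|L'(b_i)|=2$, while strictly interior vertices of $G^*$ retain lists of size three. Contraposing Theorem~\ref{thm-mainsim} on $G^*$, the non-colorability of $G^*$ forces one of the forbidden configurations: a vertex with $|L'(v)|<2$ (i.e.\ adjacent to two $F$-vertices assigned distinct colors by $\psi$), a path $v_1 v_2 v_3$ with all three lists of size two, or a path $v_1 v_2 v_3 v_4 v_5$ with $|L'(v_1)|=|L'(v_2)|=|L'(v_4)|=|L'(v_5)|=2$.

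The crux of the argument, and its main obstacle, is a case analysis showing that as $\psi$ ranges over non-extending precolorings of $F$, these obstructions collectively force a single graph. In each case I would combine Corollary~\ref{cor-cycles} with claim~(\ref{cl-face}) to forbid small separating cycles, which rules out additional interior vertices beyond the $a_i, b_i$ and pins down the possible extra adjacencies among $a_i, b_i, w_j$: any additional edge between non-consecutive $a_i, b_i$ would create a cycle of length at most~$7$ enclosing further vertices, contradicting (\ref{cl-face}), while any missing edge needed to produce each obstruction would allow some $\psi$ to extend, contradicting criticality. The resulting graph is exactly the one in Figure~\ref{fig-crit12}, and a direct check that this graph is $F$-critical (by exhibiting a non-extending precoloring and verifying that edge deletions restore extendability) completes the proof.
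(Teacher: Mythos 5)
Your high-level idea---fix a non-extending precoloring, delete the precolored vertices, pass to a reduced list assignment, and contrapose a coloring theorem---is the right sort of move, and it superficially resembles what the paper does. But the proposal has two substantive gaps, and the second one is severe enough that I do not think the argument can be completed along the lines you sketch.

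First, the structural preliminaries are asserted rather than proved. You claim that girth $5$ plus (\ref{cl-face}) immediately give ``each $a_i$ and $b_i$ is adjacent to exactly one $w_j$,'' but this is really the statement that $F$ has no $2$-chord, and establishing it requires an argument that also uses the hypothesis that every $u_i$ lies on a $5$-face (a degree-$2$ vertex on a cycle of length $\le 8$ that must bound a face cannot simultaneously be on a $5$-face, etc.). The paper spends claims (\ref{cl-2ch}), (\ref{cl-3ch}), and (\ref{cl-4ch}) on exactly this kind of bookkeeping, and the $4$-chord analysis is where Figure~\ref{fig-crit12} actually emerges. You also have not addressed whether the vertices $a_i,b_i$ are pairwise distinct (in Figure~\ref{fig-crit12} some of them coincide or are adjacent across consecutive $5$-faces, which creates exactly the short paths of list-$2$ vertices your obstruction analysis would have to handle).

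Second, and more fundamentally, your plan has no mechanism to bound the size of $G$. You write that (\ref{cl-face}) ``rules out additional interior vertices beyond the $a_i,b_i$,'' but this is false: the target graph in Figure~\ref{fig-crit12} itself contains interior vertices beyond the first layer adjacent to $F$, and (\ref{cl-face}) only constrains cycles of length at most $9$. Once the first ring of $5$-faces is in place, the inner region is bounded by a cycle that can easily have length $10$ or $12$, on which (\ref{cl-face}) is silent and on which Corollary~\ref{cor-cycles} does not apply. The paper handles this by an induction on $|V(G)|$ (``assume as the induction hypothesis that any such graph $G'$ with $|V(G')|<|V(G)|$ is isomorphic to \ldots''), applying Lemma~\ref{lemma-cycles} to intermediate $10$- and $12$-cycles, and invoking Lemma~\ref{lemma-gallai} to kill otherwise-unconstrained interior cycles. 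Your proposal has no induction and makes no use of Lemma~\ref{lemma-gallai}, so even a complete obstruction analysis for $G^*=G-V(F)$ would at best describe the layer of vertices next to $F$, not the whole interior. Finally, you acknowledge the case analysis as ``the crux \ldots and its main obstacle'' and then leave it entirely undone; given the above, I do not believe it would close without importing essentially the same inductive apparatus the paper uses.

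As a point of comparison: the paper does not delete all of $V(F)$. It deletes seven consecutive vertices and applies the full Theorem~\ref{thm-main} to $G-\{v_{12},v_1,\dots,v_6\}$ with $P=v_8v_9v_{10}$ precolored and $v_7,v_{11}$ given lists of size two. The precolored path is doing real work there (it lets one conclude that both $v_8$ and $v_{10}$ are ``bad'' and read off adjacent $5$-face structure), whereas your formulation via Theorem~\ref{thm-mainsim} only hands you one of two global obstruction patterns in $G^*$ and leaves much more ambiguity about where they sit relative to $F$.
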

\begin{proof}
Let $G$ be a graph satisfying the assumptions of the lemma, and assume as the induction hypothesis
that any such graph $G'$ with $|V(G')|<|V(G)|$ is isomorphic to the graph in Figure~\ref{fig-crit12}.
Let $F=v_1v_2\ldots v_{12}$, where $v_2$, $v_4$, \ldots, $v_{12}$ are vertices of degree two incident with $5$-faces.
In particular, $v_1$, $v_3$, \ldots, $v_{11}$ have degree at least three.
\claim{cl-2ch}{The face $F$ has no $2$-chord.} Otherwise, we may assume that there exists a vertex $v$
adjacent to $v_1$ and $v_k$ for $5\le k\le 7$.  We may also assume that $v$ is not adjacent
to a vertex $v_i$ with $2\le i\le k-1$, thus $C=v_1v_2\ldots v_kv$ is an induced cycle of length at most $8$.
Since $v_2$ is incident with a $5$-face, the open disk bounded by $C$ contains at least one vertex, contradicting (\ref{cl-face}).

Suppose that there exists a $3$-chord $Q=v_ixyv_j$ such that $|i-j|\neq 2$, i.e., such that no cycle of $Q\cup F$ bounds a $5$-face.
We may assume that $i=1$ and $j\le 7$.  By (\ref{cl-2ch}), the cycle $C=v_1\ldots v_jyx$ is induced, and since $v_2$ is incident with a $5$-face,
the open disk bounded by $C$ contains at least one vertex.
By Lemmas~\ref{lemma-crs} and \ref{lemma-cycles}, $j=7$ and there is exactly one vertex $v$ of degree three in the open disk bounded $C$.
However, this is not possible, as $v$ cannot have two neighbors in $F$.
It follows that \claim{cl-3ch}{if $Q$ is a $3$-chord of $F$, then $Q\cup F$ contains a $5$-cycle.}

Consider now a $4$-chord $v_ixyzv_j$.  Again, we assume that $i=1$ and $j\le 7$.
Observe that the cycle $C=v_1\ldots v_jzyx$ is either the union of two $5$-faces (with $j=5$ and $y$ adjacent to $v_3$),
or induced.  Assume that $C$ is induced.  As in the proof of (\ref{cl-2ch}), we exclude the case $j\le 6$, thus $j=7$.
Let $C'=v_7v_8\ldots v_{12}v_1xyz$.  By (\ref{cl-2ch}) and (\ref{cl-3ch}), $C'$ is an induced cycle.
We apply Lemmas~\ref{lemma-crs} and \ref{lemma-cycles} to the $10$-cycles $C$ and $C'$.
By the constraints on the degrees of vertices and sizes of the faces incident with $F$,
we conclude that there are the following possibilities for $C$ (and symmetrically, for $C'$):

\begin{itemize}
\item[(a)] there is a $5$-cycle inside $C$, and the vertices of this $5$-cycle are adjacent to $v_1$, $v_3$, $v_5$, $v_7$ and $y$, or
\item[(b)] there are two adjacent vertices $u_1$ and $u_2$ inside $C$, $u_1$ is adjacent to $v_3$ and $x$ and $u_2$ is adjacent
to $v_5$ and $z$.
\end{itemize}

As each of $x$, $y$ and $z$ has degree at least $3$, the configuration (a) must appear in $C$ and the configuration (b) in $C'$
(or vice versa), implying that $G$ is the graph depicted in Figure~\ref{fig-crit12}.
Therefore, we can assume that \claim{cl-4ch}{any $4$-chord together with a path in $F$ bounds a cycle $K$ such that the closed disk
bounded by $K$ is a union of two $5$-faces.}

If all the vertices $v_1$, $v_3$, \ldots, $v_{11}$ had degree three, then $G-V(F)$ would be a $6$-cycle $K$
and all vertices of $K$ would have degree three in $G$, contradicting Lemma~\ref{lemma-gallai}.
Therefore, assume that $v_3$ has degree at least $4$.
Consider a coloring $\varphi$ of $F$ that does not extend to an $L$-coloring of $G$.  Let $G'=G-\{v_{12},v_1,v_2,v_3,v_4,v_5,v_6\}$
and let $L'$ be the list assignment obtained from $L$ by removing the colors of $v_1$, $v_3$ and $v_5$ from the lists of their neighbors
and setting $L'(v_7)=\{\varphi(v_7),\varphi(v_8)\}$ and $L'(v_{11})=\{\varphi(v_{11}),\varphi(v_{10})\}$.  As $\varphi$ does not
extend to an $L$-coloring of $G$, $G'$ together with $L'$ and the path $P=v_8v_9v_{10}$ must violate assumptions of
Theorem~\ref{thm-main}.  Observe that as $v_3$ has degree at least $4$, (\ref{cl-2ch}), (\ref{cl-3ch}) and (\ref{cl-4ch})
imply that $L'$ is a valid list assignment.  It follows that both $v_8$ and $v_{10}$ are bad.  Note that $v_7$
is the only vertex with list of size two adjacent to $v_8$, and by (\ref{cl-2ch}), $v_7$ is not adjacent to any vertex with list of size
two.  Therefore, there exists a path $v_8v_7xyz$ with $|L'(y)|=|L'(z)|=2$.  By (\ref{cl-2ch}) and (\ref{cl-3ch}), $y$ is adjacent to
$v_5$, $z$ is adjacent to $v_3$ and $v_5$ has degree three.  Symmetrically, since $v_{10}$ is bad, $v_1$ has degree three.
Similarly, \claim{cl-pn}{if $v_i\in V(F)$ has degree greater than three, then $v_{i-2}$ and $v_{i+2}$ have degree three.}

For every vertex $v_i\in V(F)$ of degree three, let $z_i$ be the neighbor of $v_i$ that is not incident with $F$.
Consider now the case that $v_7$, $v_9$ and $v_{11}$ have degree three.  Then, $G$ contains an $8$-cycle
$C=v_3z_3'z_5z_7z_9z_{11}z_1z_3''$, where $z_3'$ and $z_3''$ are neighbors of $v_3$.
By (\ref{cl-face}), at least one of $z_3'$ and $z_3''$ has degree two, which is a contradiction.

Suppose now that $v_9$ and $v_{11}$ have degree three, and thus $v_7$ has degree greater than $4$.
Consider the $10$-cycle $C=v_3z'_3z_5z'_7v_7z''_7z_9z_{11}z_1z''_3$, where $z'_7$ and $z''_7$ are neighbors of $v_7$.
Since $z'_3$, $z''_3$, $z'_7$ and $z''_7$ have degree at least three, Lemmas~\ref{lemma-crs} and \ref{lemma-cycles}
imply that the open disk bounded by $C$ contains a $5$-cycle $D$
with vertices adjacent to $z'_3$, $z''_3$, $z'_7$, $z''_7$ and $z_{11}$.  However, then the subgraph $G-(V(F)\cup \{z''_7, z_9, z_{11}, z_1, z''_3\})$
contradicts Lemma~\ref{lemma-gallai}.

By symmetry, it is also not the case that both $v_7$ and $v_9$ have degree three.
Suppose that $v_9$ has degree greater than three.  By (\ref{cl-pn}), $v_7$ and $v_{11}$ have degree three.
We apply Lemmas~\ref{lemma-crs} and \ref{lemma-cycles} to the $10$-cycle $C=v_3z'_3z_5z_7z'_9v_9z''_9z_{11}z_1z''_3$,
where $z'_9$ and $z''_9$ are neighbors of $v_9$.
Since $z'_3$, $z''_3$, $z'_9$ and $z''_9$ have degree at least three, the open disk bounded by $C$ contains
two adjacent vertices $p_1$ and $p_2$, with $p_1$ adjacent to $z'_3$ and $z'_9$ and $p_2$ adjacent to $z''_3$ and $z''_9$.
However, the $4$-chord $v_3z'_3p_1z'_9v_9$ contradicts (\ref{cl-4ch}).

Therefore, we may assume that $v_7$ and $v_{11}$ have degree greater than three and $v_9$ has degree three.
Consider the induced $12$-cycle $C=v_3z'_3z_5z'_7v_7z''_7z_9z'_{11}v_{11}z''_{11}z_1z''_3$, where
$z'_7$ and $z''_7$ are neighbors of $v_7$ and $z'_{11}$ and $z''_{11}$ are neighbors of $v_{11}$. Let $Z=\{z'_3,z''_3,z'_7,z''_7,z'_{11},z''_{11}\}$,
$Y=V(G)\setminus (V(F)\cup V(C))$ and $G_2=G-(V(F)\setminus V(C))$.  By (\ref{cl-2ch}) and (\ref{cl-3ch}), $C$ is an induced cycle.
By Lemma~\ref{lemma-cycles} and the induction hypothesis applied to $G_2$ whose outer face is bounded by $C$,
\begin{itemize}
\item[(a)] $G[Y]$ is a tree with at most $4$ vertices, or
\item[(b)] $G[Y]$ is a connected unicyclic graph consisting of a $5$-cycle $K$ and at most two other vertices, or
\item[(c)] $G_2$ is isomorphic to the graph in Figure~\ref{fig-crit12}.
\end{itemize}
By (\ref{cl-4ch}), each vertex in $Y$ is adjacent to at most one vertex of $C$.
On the other hand, each vertex of $Z$ has at least one neighbor in $Y$, hence $|Y|\ge |Z|=6$,
excluding the case (a).

Consider the case (b).  Since $|Y|\ge 6$, $G[Y]$ contains at least one vertex not belonging to $K$.
As $G[Y]$ is unicyclic, it contains a vertex $v$ of degree one.  As the degree of $v$ in $G$ is at least three, $v$ has
at least two neighbors in $C$, which is a contradiction.

Finally, consider the case (c), i.e., $G_2$ is the graph in Figure~\ref{fig-crit12}, with $Z$ corresponding to the
$\ge\!3$-vertices in the outer face.  We may assume that $z'_3$ and $z'_{11}$ are the vertices of degree four.
Let $H$ be the $2$-connected component of $G-(V(F)\cup \{z'_3,z'_{11}\})$ that contains $z''_3$.
Then all vertices of $V(H)$ have degree three in $G$ and $H$ is not an odd cycle, contradicting Lemma~\ref{lemma-gallai}.
\end{proof}

The description of the critical graphs with outer face of length at most $12$ follows:

\begin{corollary}\label{cor-desc12}
Let $G$ be a plane graph of girth at least $5$, with the outer face $F$ bounded by an induced cycle
of length at most $12$.  Let $L$ be a list assignment of lists of size three to the vertices of
$V(G)\setminus V(F)$.  If $G$ is proper $F$-critical graph, then
\begin{itemize}
\item $\ell(F)\ge 9$ and $G-V(F)$ is a tree with at most $\ell(F)-8$ vertices, or
\item $\ell(F)\ge 10$ and $G-V(F)$ is a connected graph with at most $\ell(F)-5$ vertices
containing exactly one cycle, and the length of this cycle is $5$, or
\item $G$ is the graph in Figure~\ref{fig-crit12}.
\end{itemize}
\end{corollary}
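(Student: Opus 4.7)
The plan is to derive Corollary~\ref{cor-desc12} as an essentially immediate consequence of Lemma~\ref{lemma-cycles} combined with Lemma~\ref{lemma-cyc12}. Lemma~\ref{lemma-cycles} already states that a proper $F$-critical graph with $\ell(F)\le 12$ satisfies one of the three conditions (a), (b), (c) of Theorem~\ref{thm-cyclesstr}. The first two of these conditions are exactly the first two bullets in the statement of the corollary, so nothing further needs to be done for those cases.

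The only remaining situation is condition (c), namely $\ell(F)=12$ and every second vertex of $F$ has degree two and is incident with a $5$-face. Here I would invoke Lemma~\ref{lemma-cyc12}: under precisely these hypotheses, Lemma~\ref{lemma-cyc12} tells us that a proper $F$-critical graph must be isomorphic to the graph in Figure~\ref{fig-crit12}, which is the third bullet of the corollary.

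One small technical point: the corollary assumes that $F$ is bounded by an induced cycle, while Lemma~\ref{lemma-cyc12} has the same assumption, and Lemma~\ref{lemma-cycles} is stated with $F$ bounded by an induced cycle as well, so the hypotheses line up directly and no additional reduction is required. The only ``obstacle'' — really a bookkeeping step — is verifying that the conclusion ``$G$ is the graph in Figure~\ref{fig-crit12}'' from Lemma~\ref{lemma-cyc12} is indeed consistent with, and strictly refines, case (c) of Theorem~\ref{thm-cyclesstr}, which it clearly is since the graph in Figure~\ref{fig-crit12} has outer face of length $12$ whose alternate vertices have degree two and are incident with $5$-faces. Thus the proof is simply: apply Lemma~\ref{lemma-cycles}; in cases (a) and (b) we are done; in case (c) apply Lemma~\ref{lemma-cyc12} to conclude.
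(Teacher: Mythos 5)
Your proposal is correct and is exactly the intended argument: the paper states the corollary without proof as a direct consequence of Lemma~\ref{lemma-cycles} (which yields cases (a), (b), (c) of Theorem~\ref{thm-cyclesstr}) together with Lemma~\ref{lemma-cyc12}, which pins down case~(c) to the single graph of Figure~\ref{fig-crit12}.
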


\section{4-critical graphs}\label{sec-4crit}

In this section, we prove Theorem~\ref{thm-numvert}.
Let us note that several of the ideas (the face weights, dealing with the possibly non-critical
graphs created by the reductions) used in this proof are inspired by the approach of
Dvo\v{r}\'ak et al.~\cite{proof-lincrit}.  However, the basic ideas of the proofs are quite different
(discharging vs. precoloring extension).  It should be noted that our approach gives better
bounds on the sizes of the critical graphs.

Let $w:Z^+\to R$ be the function defined in the following way: $w(x)=0$ for $x\le 4$,
$w(5)=1/7$ and $w(x)=x-5$ for $x\ge 6$.
Note the following basic properties of the function $w$:
\begin{itemize}
\item $w$ is non-decreasing
\item for every $x\ge 5$, $w(x)\le x-5 + w(5)$
\item for every $x<y$, $w(x)-w(x-1)\le w(y)-w(y-1)$
\end{itemize}
The consequence of the last of these properties is the following:
\claim{cl-msum}{If $x+y=z$ and $x,y\ge m$, then $w(x)+w(y)\le w(z-m)+w(m)$.}

Let $G$ be a plane graph with the outer face $F$,
and let $\FF(G)$ be the set of faces of $G$ excluding the outer face $F$.
Let the {\em weight} $w(G)$ of $G$ be defined as $w(G)=\sum_{f\in \FF(G)} w(\ell(f))$.

Let $E_1$ be the set of all cycles of length at least $5$.  Let $E_2$ be the set of
plane graphs $G$ of girth at least $5$ with outer face $F$ bounded by a cycle
such that $G-F$ consists of a chord of $F$.  Let $E_3$ be the set of
plane graphs $G$ of girth at least $5$ with outer face $F$ bounded by an induced cycle
such that $V(G)\setminus V(F)$ consists of a single vertex of degree three.
A graph $G$ is {\em exceptional} if $G\in E_1\cup E_2\cup E_3$.
Note that if $G$ is exceptional, then
\begin{itemize}
\item $w(G)\le w(\ell(F))$, and
\item if $G\not\in E_1$, then $w(G)\le w(\ell(F)-3)+w(5)$, and
\item if $G\not\in E_1\cup E_2$, then $w(G)\le w(\ell(F)-4)+2w(5)$.
\end{itemize}

We prove the following claim, which implies Theorem~\ref{thm-numvert}.

\begin{theorem}\label{thm-fincrit}
Let $G$ be a plane graph of girth at least $5$ with the outer face $F$, and $L$ an assignment of lists
of size three to vertices of $V(G)\setminus V(F)$.  If $G$ is a non-exceptional $F$-critical graph, then
$\ell(F)\ge 10$ and $w(G)\le w(\ell(F)-5)+5w(5)$.
\end{theorem}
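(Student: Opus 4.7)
I proceed by induction on $|V(G)|+|E(G)|$. The bound $\ell(F)\ge 10$ follows from Corollary~\ref{cor-cycles} together with the chord reduction described below: if $\ell(F)\le 9$, then either $V(G)\setminus V(F)$ consists of a single vertex of degree three (whence $G\in E_3$), or $F$ has a chord that splits $G$ into two smaller pieces to which the induction applies, forcing $G$ itself to land in $E_1\cup E_2\cup E_3$.

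For the weight bound, the central strategy is to cut $G$ along a chord or short chord of $F$ and recurse on the resulting pieces. Suppose first that $F$ has a chord $uv$; then $G=G_1\cup G_2$ with $G_1\cap G_2=\{uv\}$, and by Lemma~\ref{lemma-crs} each $G_i$ (after taking its $F_i$-skeleton, if necessary) is $F_i$-critical, where $F_i$ is the outer face of $G_i$ and $\ell(F_1)+\ell(F_2)=\ell(F)+2$. I bound $w(G_i)$ either by the induction hypothesis (when $G_i$ is non-exceptional) or by the explicit inequality listed after the definition of exceptional (when $G_i\in E_1\cup E_2\cup E_3$), and combine the two contributions using the super-additivity property (\ref{cl-msum}) of $w$ to obtain the required bound on $w(G)=w(G_1)+w(G_2)$. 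Analogous arguments handle $t$-chords with $t\in\{2,3,4\}$ following the template of Lemmas~\ref{lemma-dchords} and \ref{lemma-chords}: such a chord either detaches an exceptional piece (typically a $5$-face or a class A/B fragment) of controlled weight or shrinks $G$ to a strictly smaller $F'$-critical graph to which induction applies.

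When no chord of length at most four separates $G$, I turn to Theorem~\ref{thm-main}. I choose a subpath $P=p_1\ldots p_5$ of $F$ of length four and a precoloring $\psi$ of $P$ that fails to extend to an $L$-coloring of $G$ (which exists by $F$-criticality, via a $P$-skeleton argument). Imitating the reduction scheme from the proof of Theorem~\ref{thm-main}, I identify the set $X\subseteq V(F)\setminus V(P)$ of absorbable boundary vertices (determined by the list sizes of the $F$-neighbors $v_1,\ldots,v_5$ of $p_5$), extend $\psi$ to a coloring of $X$, and delete $X$ from $G$. The residual graph has a strictly shorter outer face $F'$ with $\ell(F')<\ell(F)$ and, after taking a skeleton, is $F'$-critical; induction then bounds its weight, while the weight of the removed near-boundary configuration is at most a bounded multiple of $w(5)$ and must be charged against the $5w(5)$ slack.

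The main obstacle is the tightness of the constant $5w(5)=5/7$. Each chord or short-chord split can expel one or two $5$-faces into an exceptional piece, and these must be absorbed into the slack while leaving enough headroom for subsequent reductions; the same goes for the $5$-faces that appear when Obstructions A–D or the class A/B outcomes from the proof of Theorem~\ref{thm-main} are reduced. Verifying that no cascade of reductions exceeds the weight budget requires a delicate case analysis based on which of $E_1,E_2,E_3$ (if any) each subgraph produced by a split falls into, together with a careful examination of how the obstructions and class A/B fragments interact with $F$ to ensure that property (\ref{cl-msum}) is applied with a large enough $m$ at each step.
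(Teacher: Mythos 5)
Your proposal captures the rough shape of the argument---reduce along short chords using Lemma~\ref{lemma-crs} and (\ref{cl-msum}), then invoke the choosability theorem when no such reduction is available---but there are several genuine gaps that prevent it from actually constituting a proof.

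First, you don't use the \emph{peeling} construction at all. The paper's case analysis (Lemma~\ref{lemma-struct}) is phrased in terms of chords of a peeling $H$ of $G$, not chords of $G$ itself, precisely because after Theorem~\ref{thm-mainsim} is applied, the reducible configurations involve removing the internal vertices of one or two jumps first. Without peelings, cases (d), (e), (g), (h) of Lemma~\ref{lemma-struct} cannot even be formulated, and the simple ``cut along a $t$-chord'' scheme leaves unreduced graphs in which $F$ has no short chord but the second vertex ring does.

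Second, and more seriously, your plan for the chord-free case---``choose a subpath $P$ of $F$, take a non-extendable precoloring $\psi$, imitate the reduction scheme from the proof of Theorem~\ref{thm-main}''---does not fit the setting. In Theorem~\ref{thm-main}, the vertices $v_1,\ldots,v_5$ adjacent to $p_k$ on $F$ carry lists of size $2$ or $3$, and the set $X$ is \emph{defined} from the pattern of those list sizes; this is the entire mechanism that determines the reduction. In Theorem~\ref{thm-fincrit}, every vertex of $F$ is precolored and every interior vertex has a list of size exactly $3$, so there is no list-size pattern to read off and no canonical $X$ to delete. The paper instead proves Lemma~\ref{lemma-struct}: assuming none of the structural configurations (a)--(h) appear, it \emph{constructs} an auxiliary graph $G_1=G-(V(F)\cup X_2)$ with a \emph{derived} list assignment $L_1$ (lists shrunk by the colors of removed neighbors) and applies Theorem~\ref{thm-mainsim} to $G_1$ to get a contradiction. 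That is a use of the choosability theorem as a non-extendability certificate, not a transplantation of its internal $X$-deletion machinery.

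Third, even granting the chord-based reductions, you never address configurations (g) and (h), which require identifying $v_1$ with $v_3$ (or $v_1$ with $v_4$) to form a graph $H'$ that is \emph{not} a subgraph of $G$ drawn inside a cycle. The girth and criticality of $H'$ must be re-established from scratch (via claim~(\ref{cl-4chordcol}) and (\ref{cl-face})), and the weight of $G$ has to be recovered from $w(H')$ by reassembling faces of $G$ from faces of $H'$. This is the most delicate part of the paper's argument and is entirely absent from your plan. Finally, your closing paragraph concedes that the weight bookkeeping is ``a delicate case analysis'' you have not carried out; given how tight the constant $5w(5)$ is, this is where the proof actually lives, so the proposal as written is a heuristic outline rather than a proof.
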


Note that the bound in Theorem~\ref{thm-fincrit} is tight for the graph $G$ whose outer face $F$ is bounded
by an induced cycle, $G-V(F)$ is $5$-cycle $C$, every vertex of $C$ has degree three in $G$ and
$G$ has only one face of length greater than $5$ distinct from $F$.

Before we proceed with the proof of the theorem, let us introduce several definitions and auxiliary results.
Let $G$ be a plane graph of girth at least $5$ with the outer face $F$.
A {\em jump} in $G$ is a subgraph of $G$ consisting of two $5$-faces $v_1v_2v_3yx$
and $v_3v_4v_5zy$ such that the path $v_1v_2v_3v_4v_5$ (the {\em base} of the jump)
is a part of the facial walk of $F$ and $x,y,z\not\in V(F)$. The path $v_1xyzv_5$
is called the {\em body} of the jump.  The {\em internal vertices} of the jump
are $v_2$, $v_3$, $v_4$, $x$, $y$ and $z$.  Two jumps are {\em disjoint} if the sets of
their internal vertices are disjoint.  A {\em peeling} of $G$ is a subgraph $H\subseteq G$
obtained from $G$ by removing the internal vertices of the bases of at most two disjoint jumps.
Let $B$ be the outer face of $H$.  Note that $\ell(B)=\ell(F)$ and $w(G)\le w(H)+4w(5)$.
Also, by Lemma~\ref{lemma-crs}, if $G$ is $F$-critical, then $H$ is $B$-critical.
Let us now show that an $F$-critical graph $G$ contains one of several configurations, which
enable us to reduce it and apply induction.  The configurations discussed in the cases (g) and (h)
are illustrated in Figure~\ref{fig-last}.

\begin{lemma}\label{lemma-struct}
Let $G$ be a plane graph of girth at least $5$ with the outer face $F$ and $L$ an assignment of lists
of size three to vertices of $V(G)\setminus V(F)$.  If $G$ is a proper $F$-critical graph,
then at least one of the following holds:

\begin{itemize}
\item[(a)] the outer face of a peeling of $G$ is not bounded by an induced cycle, or
\item[(b)] the outer face of a peeling of $G$ has a $2$-chord, or
\item[(c)] $F$ contains two adjacent vertices of degree two, or
\item[(d)] a peeling $H$ of $G$ with the outer face $B$ has a $3$-chord $Q$ such that
no cycle in $B\cup Q$ distinct from $B$ bounds a face of $G$, or
\item[(e)] a peeling $H$ of $G$ with the outer face $B$ has a $4$-chord $Q=v_0v_1v_2v_3v_4$
such that for both cycles $K\subseteq B\cup Q$ distinct from $B$, the subgraph of $G$
drawn in the closed disk bounded by $K$ is equal neither to $K$ nor to $K$ with exactly
one chord incident with $v_2$, or
\item[(f)] there exists a path $uvwx\subseteq G$ such that $u,v,w,x\not\in V(F)$ and each of $u$, $v$, $w$ and $x$
has a neighbor in $F$, or
\item[(g)] there exists a $4$-chord $Q=v_0v_1v_2v_3v_4$ of the outer face $B$ of a peeling $H$ of $G$ such that a cycle
$C\subseteq B\cup Q$ distinct from $B$ bounds a face of $G$ and $H$ contains a jump intersecting $C$ in $v_0v_1$, or
\item[(h)] there exist a $4$-chord $Q=v_0v_1v_2v_3v_4$ of the outer face $B$ of a peeling $H$ of $G$ and $5$-faces
$C_1$ and $C_2$ of $H$ such that a cycle $C\subseteq B\cup Q$ distinct from $B$ bounds a face of $G$,
$|V(C_1\cap B)|=|V(C_2\cap B)|=3$, $C_1\cap C=v_0v_1$ and $C_2\cap C=v_3v_4$.
\end{itemize}
\end{lemma}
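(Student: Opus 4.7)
I would argue by contradiction, assuming $G$ is a proper $F$-critical graph satisfying none of (a)--(h). Taking $H=G$ as the trivial peeling (no jumps removed), the failure of (a) says $F$ is bounded by an induced cycle, the failure of (b) says $F$ has no $2$-chord, and the failure of (c) gives that no two adjacent vertices of $F$ both have degree two. Applying Corollary~\ref{cor-cycles} and (\ref{cl-face}) to the various peelings will be routine once these are in hand.

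The plan is to invoke Theorem~\ref{thm-main} on an appropriate reduction. Fix a precoloring $\psi$ of $F$ that does not extend to an $L$-coloring of $G$ and choose five consecutive vertices $u_0u_1u_2u_3u_4$ along the facial walk of $F$, forming a subpath $P$. Let $G_\circ = G - (V(F)\setminus V(P))$ and let $L_\circ$ be obtained from $L$ by removing, for each $v \in V(F)\setminus V(P)$, the color $\psi(v)$ from the lists of the neighbors of $v$ in $V(G_\circ)\setminus V(P)$. A $P$-skeleton $G'$ of $G_\circ$ is a proper $P$-critical graph because $\psi$ fails to extend. If the hypotheses of Theorem~\ref{thm-main} hold for $(G',P,L_\circ)$, then $\ell(P)=4$ and $G'$ is either a $5$-face or of class A or class B. In each sub-case the structure of $G'$ pulls back through $G_\circ \subseteq G$ to produce either a $4$-chord with attached $5$-face structure in $G$ (yielding (g) or (h), possibly after removing a jump to pass to a nontrivial peeling) or it forces one of (d) or (e) to hold, a contradiction.

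The remaining issue is when Theorem~\ref{thm-main} does \emph{not} apply to $G'$: either $L_\circ$ fails to be a valid list assignment, or some vertex of $P$ is bad. Each such failure corresponds to a short path in $G$ that connects $V(F)\setminus V(P)$ to $V(P)$ (possibly through a vertex of list-size two not on $F$), and translates back through $\psi$ into a chord, $2$-chord, $3$-chord or $4$-chord of $F$, or into an internal $4$-path all of whose vertices have a neighbor in $F$. By the failure of (b), (c), (d), (e), (f) the only remaining possibilities are configurations already forced to be (g) or (h). By varying the choice of $u_0,\ldots,u_4$ along $F$ and, when needed, replacing $G$ by a peeling $H$ obtained by removing up to two disjoint jumps, the argument covers every vertex of the outer face.

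The main obstacle will be the bookkeeping: there are several choices of which five consecutive vertices to take along the outer face of a peeling, the peeling itself may remove up to two disjoint jumps, and the class A / class B obstructions must be translated correctly into the $4$-chord configurations (g) and (h) rather than into simpler configurations that would already be covered by (d), (e) applied to the boundary of the peeling. The class A case in particular will require tracking the adjacent $2$-list neighbors of $p_1$ and $p_5$ in Theorem~\ref{thm-main} back to $G$ to identify the two attached $5$-faces in (h), and distinguishing when one of these attachments is instead the base of a jump of $H$, producing (g).
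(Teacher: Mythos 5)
The approach you sketch is genuinely different from the paper's, and the difference matters: as written there is a gap that you acknowledge but do not resolve, and I do not think it can be filled without essentially redoing the paper's argument.

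You keep five consecutive vertices $P=u_0\ldots u_4$ of $F$ precolored, delete the rest of $F$, and apply Theorem~\ref{thm-main} to a $P$-skeleton of the result. But with $\ell(P)=4$, the conclusion of Theorem~\ref{thm-main} is \emph{not} a contradiction --- it is that the $P$-skeleton is a $5$-face or of class A or class B, a perfectly consistent outcome. You assert that ``the structure of $G'$ pulls back \ldots\ to produce either a $4$-chord with attached $5$-face structure \ldots\ or \ldots\ (d) or (e),'' but this is precisely what would need to be proved, and it is not clear it is true: class A/B describe obstructions anchored at the endpoints $p_1,p_5$ of a precolored path living deep in $G$, whereas (g) and (h) are about $4$-chords of the \emph{outer} face of a peeling and jumps/$5$-faces hugging that boundary. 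Nothing in your sketch connects the two. There is a second, related gap: after deleting $V(F)\setminus V(P)$, the three body vertices of each jump lying outside $P$'s neighborhood lose a color and end up with lists of size two; they are mutually adjacent in pairs, so the reduced instance contains paths $xyz$ with all three lists of size two, violating the validity condition of Theorem~\ref{thm-main}. Your ``failures of validity translate to chords'' catch-all does not cover this, because these paths are not incident with $F$ at all --- they are the very jumps that the peeling is meant to remove.

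The paper sidesteps both problems by a cleaner reduction: let $X$ be the vertices off $F$ with a neighbor on $F$, show (using that (a)--(f) fail) that $G[X]$ is a union of paths of length at most two, identify the length-two components $X_2$ with bases of jumps, color $V(F)\cup X_2$ first (possible because each $X_2$-vertex has only one neighbor on $F$ and list size three), and then delete \emph{all} of $V(F)\cup X_2$. What remains has no precolored vertices, so one invokes Theorem~\ref{thm-mainsim} (the $P=\emptyset$ case), whose conclusion is unconditional colorability --- an immediate contradiction. The only work is to verify that the vertices of list size two in the reduced graph (namely $X_0\cup X_1\cup Y$, where $Y$ is the second neighborhood layer around $X_2$) contain no path of length two and no bad path $z_1z_2vz_3z_4$, and this is exactly where (g) and (h) enter as the excluded configurations. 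If you want to salvage your route, you would first have to reproduce essentially this structural analysis of $X$ and $Y$ to handle the jump vertices, at which point the $P=\emptyset$ reduction is strictly simpler than the $\ell(P)=4$ one you propose.
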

\begin{proof}
Suppose that none of (a-h) holds.  Then the outer face of $G$ is an induced cycle, and since $G$ is $F$-critical, $G$ is $2$-connected.
By Lemma~\ref{lemma-cycles}, since (b) is false, $\ell(F)\ge 10$.

Let $X$ be the set of vertices of $V(G)\setminus V(F)$ that have a neighbor in $F$.
Since (b) is false, each vertex of $X$ has exactly one neighbor in $F$.
Since (c) and (d) are false, if $x_1,x_2\in X$ are adjacent, then there exists a unique $5$-face
$f(x_1x_2)=x_1x_2v_1v_2v_3$, where $v_1v_2v_3$ is a part of the facial walk of $F$.
Similarly, if $x_1x_2x_3$ is a path with $x_1,x_2,x_3\in X$, then the $5$-faces $f(x_1x_2)$ and $f(x_2x_3)$ intersect $F$ in consecutive segments.
It follows that $G[X]$ is either a cycle, or a union of paths.
Note that $G[X]$ cannot be a cycle of length three, since the girth of $G$ is at least $5$.
Since (f) is false, $G[X]$ is a union of paths of length at most two.

For $i\in\{0,1,2\}$, let $X_i$ be the set of vertices $x\in X$ such that the maximal path
in $G[X]$ that contains $x$ has length $i$.  Note that each path $P$ in $G[X_2]$ corresponds
to a jump.  Let $Y$ be the set of vertices of $V(G)\setminus(V(F)\cup X_2)$
that have a neighbor in $X_2$.  Note that $X\cap Y=\emptyset$.

Suppose that a vertex $y\in Y$ has two neighbors in $X$.  Let $x_1$ be a neighbor of $y$ in $X_2$ and
$x_2\in X$ another neighbor in $X$.  Note that $F$ has a $4$-chord $Q=v_1x_1yx_2v_2$.  As $X\cap Y=\emptyset$, $y$ is not adjacent to a vertex of $F$,
and since (e) is false, $Q\cup F$ contains a cycle $K\neq F$ bounding a face.
However, the $4$-chord $Q$ together with the jump containing $x_1$ implies that $G$ satisfies (g), which is a contradiction.
Therefore, each vertex in $Y$ has exactly one neighbor in $X$ (and this neighbor belongs to $X_2$).

Consider now two adjacent vertices $y_1,y_2\in Y$.  For $i\in\{1,2\}$, let $J_i$ be the jump
in that $y_i$ has a neighbor.  Suppose that $J_1\neq J_2$, and let $H$ be the peeling of $G$
obtained by removing the internal vertices of the base of $J_1$.  Let $B$ be the outer face of $H$.
Then $B$ has a $4$-chord $Q=x_1y_1y_2x_2v$, with $x_i$ belonging to the body of $J_i$, for $i\in \{1,2\}$.
Note that $y_2$ does not have a neighbor in $B$, as $y_2\not\in X$, and since (e) is false,
$Q\cup B$ contains a cycle $K\neq B$ bounding a face of $G$.
However, $Q$ together with the jump $J_2$ implies that $G$ satisfies (g), which is a contradiction.
It follows that $J_1=J_2$, and as $G$ has girth at least $5$,
$y_1y_2$ together with a path in $X_2$ (a subpath of the body of $J_1$)
bounds a $5$-face $f(y_1y_2)$.  Since for any edge $y_1y_2\in E(G[Y])$, the neighbors of $y_1$ and $y_2$
in $X$ belong to the same jump, we conclude that $G[Y]$ does not contain a path of length two.

Let $Y_1\subseteq Y$ be the set of vertices that are incident with an edge in $G[Y]$.
Note that $G[X_1\cup Y_1]$ is $1$-regular.
Suppose that a vertex $v\in V(G)\setminus (V(F)\cup X\cup Y)$ has
two neighbors $z_1,z_2\in X_1\cup Y_1$.  For $i\in\{1,2\}$, let $z'_i$ be the neighbor
of $z_i$ in $X_1\cup Y_1$, and $v_i$ the neighbor of $z_i$ in $f(z_iz'_i)$ distinct
from $z'_i$.  There exists a peeling $H$ of $G$ with the outer face $B$ such that
$f(z_iz'_i)-\{z_i,z'_i\}$ is a subpath of $B$ for $i\in\{1,2\}$.  Then,
$Q=v_1z_1vz_2v_2$ is a $4$-chord of $B$.  As $v\not\in X\cup Y$, $v$ does not have a neighbor in $B$,
and since (e) is false, $Q\cup B$ contains a cycle $K\neq B$ bounding a face of $G$.
However, $Q$ together with the faces $f(z_1z'_1)$ and $f(z_2z'_2)$ implies that $G$ satisfies (h),
which is a contradiction.  Therefore, no vertex in $V(G)\setminus (V(F)\cup X\cup Y)$
has two neighbors in $X_1\cup Y_1$.

As $G$ is critical and $G\neq F$, there exists a precoloring $\psi$ of $F$ that does
not extend to an $L$-coloring of $G$.  Since each vertex of $X_2$ has list of size three,
only one neighbor in $F$ and $G[X_2]$ is a union of paths, there exists an $L$-coloring $\psi_1$
of $G[V(F)\cup X_2]$ extending $\psi$.  Let $G_1=G-(V(F)\cup X_2)$ and let $L_1$ be the list assignment for $G_1$
such that $L_1(v)=L(v)$ if $v$ has no neighbor in $V(F)\cup X_2$ and $L_1(v)=L(v)\setminus\{\psi_1(x)\}$ if
$x\in V(F)\cup X_2$ is the (unique) neighbor of $v$.  Note that each vertex of $G_1$ has list of size at least two,
and the set of vertices with lists of size two is a subset of $Z=X_0\cup X_1\cup Y$.
As we proved in the previous paragraphs, $G[Z]$ does not contain a path of length two
and there does not exist a path $z_1z_2vz_3z_4\subseteq G_1$ with
$z_1,z_2,z_4,z_5\in Z$.  Therefore, $G_1$ with the list assignment $L_1$ satisfies
assumptions of Theorem~\ref{thm-mainsim} and $G_1$ has an $L_1$-coloring $\varphi$.
However, $\psi_1\cup \varphi$ is an $L$-coloring of $G$ that extends $\psi$,
which is a contradiction.
\end{proof}

The following claims allow us to deal with the configurations described in Lemma~\ref{lemma-struct}.

\begin{lemma}\label{lemma-nontriv}
Let $G$ be a plane graph of girth at least $5$ with the outer face $F$ and $L$ an assignment of lists
of size three to vertices of $V(G)\setminus V(F)$.  Suppose that $G$ is a non-exceptional $F$-critical graph
and $H$ is a peeling of $G$.  Then $H$ is not exceptional.
\end{lemma}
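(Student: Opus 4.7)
The plan is to prove the contrapositive: if $H$ is exceptional, then $G$ is not $F$-critical. If no jump has been peeled, then $H=G$ and $G$ itself is exceptional, immediately contradicting the hypothesis. So I may assume that at least one jump $J$ with base $v_1v_2v_3v_4v_5$ and body $v_1xyzv_5$ has had its internal base vertices $v_2,v_3,v_4$ removed from $G$ to form $H$; then $x,y,z \in V(H)\setminus V(F)$ appear consecutively on the outer face $B$ of $H$.

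The key observation is that, by the girth-$5$ assumption together with the structure of the two $5$-faces $v_1v_2v_3yx$ and $v_3v_4v_5zy$ of $G$, the vertices $x$ and $z$ cannot be adjacent in $G$ to any of $v_2$, $v_3$, $v_4$ (for instance, an edge $xv_3$ would close the $4$-cycle $xv_1v_2v_3$). Consequently $\deg_G(x)=\deg_H(x)$ and $\deg_G(z)=\deg_H(z)$. As soon as one of $x$ or $z$ has degree at most $2$ in $H$, it has $\deg_G(\cdot)<|L(\cdot)|=3$ in $G$, so deleting it from $G$ yields a proper subgraph $G'\supseteq F$ such that every $L$-coloring of $G'$ extending a given precoloring of $F$ extends greedily to $G$, contradicting the $F$-criticality of $G$.

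It therefore suffices to check, case by case, that every exceptional $H$ leaves some body vertex of some peeled jump with degree $2$. If $H\in E_1$, then $H=B$ and both $x,z$ have degree $2$. If $H\in E_2$, then $H=B$ plus a single chord, which can raise the degree of at most one of $\{x,z\}$. If $H\in E_3$, then $H=B$ together with a single interior vertex $w$ of degree three; if $w$ were adjacent to both $x$ and $z$, the cycle $wxyzw$ would have length $4$, so $w$ is adjacent to at most one of $\{x,z\}$ in any given peeled jump. The case of two peeled jumps is handled identically: a single chord can save at most one of the four body vertices $\{x,z,x',z'\}$, and by the same $4$-cycle obstruction a single degree-$3$ interior vertex can save at most one of $\{x,z\}$ per jump body, leaving at least two vertices of degree $2$ in $G$. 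The only nontrivial step, and the one I would flag as the main obstacle, is the $4$-cycle exclusion in the $E_3$ case; everything else is immediate from the definition of a jump.
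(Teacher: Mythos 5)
Your proof is correct and follows essentially the same route as the paper: fix a peeled jump $J$ with body $v_1xyzv_5$ lying on the outer cycle $B$ of $H$, check via the $E_1/E_2/E_3$ case analysis (using the girth bound to exclude the chord $xz$ and the $4$-cycle $wxyz$) that at least one of $x,z$ has degree two in $H$, observe this persists to $G$, and contradict $F$-criticality. One small point you leave implicit in the step ``$\deg_G(x)=\deg_H(x)$'': when two jumps are peeled, you should also note that $x$ cannot be adjacent to the base vertices $v_2',v_3',v_4'$ of the other jump---this holds because those vertices have all their neighbours either on $F$ or equal to $y'$ of the other jump's body, and $x\notin V(F)$ while $x\neq y'$ by disjointness of the jumps; the paper glosses over this as well, so it is a shared omission rather than a defect of your argument.
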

\begin{proof}
Suppose that $H$ is exceptional.  Since $G$ is not exceptional, there exists a jump $J\subseteq G$ such that
the body $v_1xyzv_5$ of $J$ is a part of the boundary of the outer face of $H$.
As $H$ is exceptional, $x$ or $z$ (say $x$) has degree two in $H$.  However, then $x$ has degree two in $G$ as well,
contradicting the criticality of $G$.
\end{proof}

\begin{lemma}\label{lemma-a}
Let $G$ be a plane graph of girth at least $5$ with the outer face $F$ and $L$ an assignment of lists
of size three to vertices of $V(G)\setminus V(F)$.  Suppose that $G$ is a non-exceptional $F$-critical graph
and $H$ is a peeling of $G$ with the outer face $B$ such that $B$ is not an induced cycle.
Let $H_1$ and $H_2$ be induced subgraphs of $H$ such that $H=H_1\cup H_2$, $H_1\neq H_1\cap H_2\neq H_2$ and $H_1\cap H_2$ is
either a vertex of $B$, or a chord of $B$.  Let $B_i$ be the outer face of $H_i$ for $i\in \{1,2\}$.
If $H_1,H_2\in E_1$, then $\ell(F)=\ell(B_1)+\ell(B_2)$.
\end{lemma}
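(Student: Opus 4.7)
The plan is to use Lemma~\ref{lemma-nontriv} to dispatch the chord case and the hypothesis that $B$ is not an induced cycle to force the remaining vertex case into a side-by-side embedding, from which the length formula follows directly.

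Suppose first that $H_1 \cap H_2 = e$ is a chord of $B$. Since both $H_1, H_2 \in E_1$ are cycles each containing $e$, their union $H$ is a theta graph. For $e$ to be a chord of $B$ rather than an edge of $B$, the arcs $H_1 - e$ and $H_2 - e$ must lie on opposite sides of $e$ in the planar embedding inherited from $G$: otherwise whichever of the two cycles bounds the outer face would contain $e$, forcing $e \in E(B)$. In this theta embedding, $B$ is bounded by the cycle $(H_1 - e) \cup (H_2 - e)$ and $e$ is the unique edge of $H$ not lying on $B$. Hence $H$ is the outer cycle plus a single chord, i.e., $H \in E_2$, contradicting Lemma~\ref{lemma-nontriv}.

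Therefore $H_1 \cap H_2 = \{v\}$ is a single vertex of $B$, and $H$ is the wedge of the cycles $H_1$ and $H_2$ at $v$. In the inherited embedding, either the two cycles lie side-by-side at $v$ (both contributing to the outer face) or one is nested inside a bounded face of the other. In the nested case, $V(B)$ coincides with the vertex set of the outer cycle; since the inner cycle meets the outer cycle only at $v$, no edge of $H$ is a chord of $B$, making $B$ an induced cycle, which contradicts the hypothesis. So the cycles lie side-by-side at $v$, and the boundary walk of $B$ traces each of $H_1$ and $H_2$ once, passing through $v$ twice. This yields $\ell(B) = \ell(H_1) + \ell(H_2) = \ell(B_1) + \ell(B_2)$, and the conclusion follows because $\ell(F) = \ell(B)$ by the peeling's preservation of outer face length.

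The main obstacle is the chord case: one has to recognise that the hypothesis $H_1 \cap H_2 = e$ together with $H_1, H_2 \in E_1$ forces the theta embedding and hence places $H$ in $E_2$; once this identification is made, Lemma~\ref{lemma-nontriv} finishes it immediately. The vertex case is then nearly mechanical, amounting to using the hypothesis that $B$ is not an induced cycle to rule out the nested sub-embedding and then reading off the boundary-walk length of a wedge of two cycles.
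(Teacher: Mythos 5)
Your argument matches the paper's: dispatch the chord case by observing that two cycles sharing an edge with that edge off $B$ form a theta graph in $E_2$, contradicting Lemma~\ref{lemma-nontriv}, and then read off $\ell(F)=\ell(B_1)+\ell(B_2)$ from the remaining wedge-at-a-vertex configuration together with $\ell(B)=\ell(F)$. The paper's proof of this lemma is extremely terse (one sentence for each case); you simply supply the justification it leaves implicit, including the useful observation that a nested embedding at the shared vertex would make $B$ an induced cycle and is therefore ruled out by hypothesis.
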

\begin{proof}
If $H_1\cap H_2$ is a chord of $B$, then $H\in E_2$ contrary to Lemma~\ref{lemma-nontriv}.
It follows that $\ell(F)=\ell(B_1)+\ell(B_2)$.
\end{proof}

\begin{lemma}\label{lemma-b}
Let $G$ be a plane graph of girth at least $5$ with the outer face $F$ and $L$ an assignment of lists
of size three to vertices of $V(G)\setminus V(F)$.  Suppose that $G$ is a non-exceptional $F$-critical graph
and $H$ is a peeling of $G$ with the outer face $B$ bounded by an induced cycle.
Let $Q$ be a $2$-chord of $B$ and $H_1,H_2\neq Q$ be induced subgraphs of $H$ such that $H=H_1\cup H_2$ and $H_1\cap H_2=Q$.
If $H_1\in E_1$, then $H_2\not\in E_1\cup E_2$.  If additionally $H_2\in E_3$, then $w(G)\le w(H)+2w(5)$.
\end{lemma}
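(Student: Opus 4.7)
First I would fix notation: write $Q=q_0q_1q_2$ with $q_0,q_2\in V(B)$ and $q_1\in V(H)\setminus V(B)$, and let $B_2$ denote the outer face of $H_2$, consisting of $Q$ together with the subpath $P$ of $B$ from $q_0$ to $q_2$ on the $H_2$ side. Since $H$ is a peeling of the $F$-critical graph $G$, it is $B$-critical (via Lemma~\ref{lemma-crs}); hence $q_1$, which lies in $V(H)\setminus V(B)$ and has a list of size three, must have degree at least three in $H$. Its edges in $H$ are the two $Q$-edges together with whatever edges of $H_2\setminus Q$ are incident to $q_1$. If $H_2\in E_1$, no such extra edges exist and $\deg_H(q_1)=2$, a contradiction. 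If $H_2\in E_2$, so $H_2=B_2\cup\{e\}$ for a chord $e$, then either $e$ misses $q_1$ (again $\deg_H(q_1)=2$) or $e$ meets $q_1$, making $q_1$ the unique interior vertex of $H$ of degree three and placing $H\in E_3$, contradicting Lemma~\ref{lemma-nontriv}. Either way, $H_2\notin E_1\cup E_2$.

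For the weight bound in the case $H_2\in E_3$, I would record the counting identity $w(G)=w(H)+2k\cdot w(5)$, where $k\in\{0,1,2\}$ is the number of disjoint jumps peeled to produce $H$. This follows directly from the peeling operation: each peeled jump contributes two internal $5$-faces to $G$ that are absorbed into the outer face of $H$, while all other face lengths are preserved. The claimed inequality $w(G)\leq w(H)+2w(5)$ is therefore equivalent to $k\leq 1$.

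To rule out $k=2$, I would suppose two disjoint jumps have been peeled and examine how their bodies $v_1xyzv_5$ sit as consecutive $5$-vertex subpaths along $B=P\cup P'$, keeping in mind that the three interior body vertices $x,y,z$ all lie in $V(G)\setminus V(F)$. Because $H_1\in E_1$ is merely a cycle, every vertex of $V(P')\setminus\{q_0,q_2\}$ has degree two in $H$, strongly restricting body placements on the $P'$ side: a body vertex there has $H$-degree two and must therefore have $G$-degree exactly three. On the $P$ side, $B_2$ is induced and the interior vertex $w$ of $H_2$ has exactly three neighbours on $B_2$, one of which is $q_1$ by the degree argument of the first paragraph. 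I would then run through the case analysis on the positions of the two bodies---both on $P'$, both on $P$, split across the two sides, or straddling $q_0$ or $q_2$---and argue that each configuration yields either a short cycle (violating girth five), a forced chord in the induced cycle $B_2$, or a degree contradiction. The main obstacle will be the subcase where both bodies lie on $P$: there is ample room inside $B_2$ (as $\ell(P)\geq 8$ under $H_2\in E_3$) and no immediate local obstruction, so resolving it requires exploiting the exact placement of the remaining two neighbours $u_2,u_3$ of $w$ on $B_2$, together with the criticality-forced degree-three constraint on the interior body vertices, to produce the final contradiction.
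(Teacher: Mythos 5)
Your treatment of the first assertion ($H_2\notin E_1\cup E_2$) is correct and essentially the paper's, up to a cosmetic difference in which contradiction you invoke in the $E_2$ case (you use the degree of $q_1$ or the inducedness of $B$; the paper folds this into noting that $H\in E_3$, contradicting Lemma~\ref{lemma-nontriv}). The weight identity $w(G)=w(H)+2k\,w(5)$ with $k$ the number of peeled jumps is also correct, so the remaining task is indeed to rule out $k=2$.

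Here, however, you have a genuine gap, and also a small slip along the way. The slip: you claim a body vertex lying strictly inside $P'$ has $H$-degree two and ``must therefore have $G$-degree exactly three.'' For the two \emph{end} vertices of a body (those of degree two in the jump), none of their neighbours are removed by the peeling, so $\deg_G=\deg_H$; if that is two, the vertex violates criticality outright, rather than having degree three. Only the \emph{middle} body vertex loses a neighbour (the centre vertex of the base) when peeled, so only it jumps from $H$-degree two to $G$-degree three. More seriously, you openly leave the key subcase (both bodies on the $P$ side) unresolved: you describe an intended strategy but do not supply the argument, so this is a plan rather than a proof. The paper closes exactly this hole with Lemma~\ref{lemma-gallai} (the Gallai--Vizing style lemma): since $V(H)\setminus V(B)=\{q_1,w\}$ are the only vertices off $B$, and they jointly have at most four edge-slots to offer body vertices as third neighbours (three for $q_1$, of which one goes to $w$; three for $w$, of which one goes to $q_1$), all four end-body vertices consume those slots, the middle body vertices then have $G$-degree exactly three, and therefore every vertex of $V(G)\setminus V(F)$ has degree exactly three. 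This makes $G-V(F)$ a $2$-connected cubic graph that is not an odd cycle, contradicting Lemma~\ref{lemma-gallai}. That global counting via the limited neighbourhood of $\{q_1,w\}$, followed by the appeal to Gallai--Vizing, is the idea you are missing; a pure placement case analysis would be substantially longer and, as you note, does not obviously close in the most spacious configuration.
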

\begin{proof}
Suppose that $H_1\in E_1$.
If $H_2\in E_1$, then the vertex $v\in V(Q)\setminus V(B)$ has degree two, contradicting the criticality of $G$.
If $H_2\in E_2$, then $H\in E_3$, contrary to Lemma~\ref{lemma-nontriv}.
Suppose for a contradiction that $H_2\in E_3$ and $w(G)>w(H)+2w(5)$, i.e., $H$ was obtained from $G$ by
removing the internal vertices of the bases of two jumps $J_1$ and $J_2$.  Since $v$ has degree at least
three, it follows that $V(H)\setminus V(B)$ consists of two adjacent vertices of degree three.
Let $x_i$ and $y_i$ be the internal vertices of the bodies of $J_i$ that have degree two in $J_i$, for $i\in \{1,2\}$.
Since $x_1$, $y_1$, $x_2$ and $y_2$ have degree greater than two in $G$, each of them is adjacent to
one of the vertices of $V(H)\setminus V(B)$.  However, then each vertex of $V(G)\setminus V(F)$ has degree three,
$G-V(F)$ is $2$-connected and not an odd cycle, which contradicts Lemma~\ref{lemma-gallai}.
\end{proof}

\begin{lemma}\label{lemma-c}
Let $G$ be a plane graph of girth at least $5$ with the outer face $F$ and $L$ an assignment of lists
of size three to vertices of $V(G)\setminus V(F)$.  Suppose that $G$ is a non-exceptional $F$-critical graph
that does not have properties (a) and (b) of Lemma~\ref{lemma-struct}, and that $v_1$ and $v_2$ are
two adjacent vertices of degree two in $G$.  Let $G_1$ be the graph obtained from $G$ by identifying $v_1$ with $v_2$,
and $F_1$ the outer face of $G_1$.  Then $\ell(F_1)=\ell(F)-1$, $G_1$ is a non-exceptional $F_1$-critical graph and the girth of $G_1$ is at least $5$.
\end{lemma}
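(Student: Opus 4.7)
My plan is to establish the four conclusions of the lemma---length of $F_1$, girth at least $5$, non-exceptionality of $G_1$, and $F_1$-criticality of $G_1$---in sequence, after pinning down the local structure at $v_1, v_2$. First, $v_1, v_2 \in V(F)$, since every interior vertex has a list of size three and hence degree at least three by Lemma~\ref{lemma-crs}. Since property (a) of Lemma~\ref{lemma-struct} fails for $G$ (viewed as a peeling of itself), $F$ is bounded by an induced cycle; and since (b) fails, $F$ has no $2$-chord. Let $u_1, u_2$ denote the neighbors of $v_1, v_2$ on $F$ other than one another; these are their only other neighbors. Forming $G_1$ by identifying $v_1$ with $v_2$ into a new vertex $v$ and discarding the resulting loop, I find that $v$ has exactly the neighbors $u_1, u_2$, and the boundary walk of $F_1$ is obtained from that of $F$ by replacing $u_1 v_1 v_2 u_2$ by $u_1 v u_2$, so $\ell(F_1) = \ell(F) - 1$.

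For girth at least $5$, I would enumerate paths $Q$ from $v_1$ to $v_2$ in $G$ of length $2, 3$ or $4$ avoiding the edge $v_1 v_2$, since these are exactly what could produce a cycle of length less than $5$ in $G_1$. Length $2$ forces $u_1 = u_2$ and a triangle in $G$; length $3$ forces a $4$-cycle $v_1 u_1 u_2 v_2$ in $G$; length $4$ has a middle vertex $x$ adjacent to both $u_1$ and $u_2$, which if off $F$ gives a $2$-chord $u_1 x u_2$ forbidden by (b), and if on $F$ forces (since $F$ is induced) $F$ to be a $5$-cycle. The last case is ruled out by Corollary~\ref{cor-cycles} applied to the proper $F$-critical graph $G$ with induced outer face, which together with non-exceptionality (to exclude the $\ell(F) = 9$ case with a single interior degree-$3$ vertex) gives $\ell(F) \geq 10$. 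This girth check is the main obstacle I anticipate.

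To rule out $G_1 \in E_1 \cup E_2 \cup E_3$: if $G_1 \in E_1$ then $G_1 = F_1$, so $G = F \in E_1$; if $G_1 \in E_2$, the chord of $F_1$ lifts to a chord of $F$ with $V(G) = V(F)$, giving $G \in E_2$; if $G_1 \in E_3$, the unique interior vertex has three neighbors on $F_1$, none equal to $v$ (since $v_1, v_2$ have only $F$-neighbors in $G$), so $G \in E_3$. Each case contradicts non-exceptionality of $G$.

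For $F_1$-criticality, the key observation is that because $v_1, v_2$ have no interior neighbors, for any precoloring $\psi$ of $F$ the extendability of $\psi$ to an $L$-coloring of $G$ depends only on $\psi$ restricted to $V(F) \setminus \{v_1, v_2\}$, and matches how extendability to $G_1$ depends on $\tilde\psi$ restricted to $V(F_1) \setminus \{v\}$. Given a proper subgraph $G_1' \subsetneq G_1$ with $F_1 \subseteq G_1'$, uncontracting $v$ into $v_1, v_2$ (and restoring the edge $v_1 v_2$) produces a proper subgraph $G' \subsetneq G$ with $F \subseteq G'$. $F$-criticality of $G$ furnishes a coloring $\psi$ of $F$ that extends to $G'$ but not to $G$; defining $\tilde\psi$ to agree with $\psi$ on $V(F) \setminus \{v_1, v_2\}$ and $\tilde\psi(v)$ to be any color distinct from $\psi(u_1)$ and $\psi(u_2)$ then yields an $F_1$-coloring extending to $G_1'$ but not to $G_1$.
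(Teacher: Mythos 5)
Your proof is correct and follows essentially the same route as the paper's: $\ell(F_1)=\ell(F)-1$ is immediate from $v_1,v_2\in V(F)$, girth is verified by ruling out short cycles through the contracted vertex using the absence of chords and $2$-chords (the paper phrases this as "$v_0$ and $v_3$ have no common neighbor"), non-exceptionality follows from the fact that un-contracting is subdividing an outer-face edge and preserves each $E_i$, and $F_1$-criticality follows from the bijection between proper subgraphs containing the outer face together with the observation that extendability of a precoloring is insensitive to the colors on the degree-two vertices $v_1,v_2$ (resp.\ $v$). Your version just spells out the case analysis in the girth step and the non-exceptionality step more explicitly than the paper's one-line justifications.
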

\begin{proof}
Note that $v_1, v_2\in V(F)$, and thus $\ell(F_1)=\ell(F)-1$.  Let $v_0v_1v_2v_3$ be the subpath of $F$ containing $v_1$ and $v_2$.
Since $G$ does not satisfy (a) and (b), $v_0$ and $v_3$ do not have a common neighbor, and thus the girth of $G_1$ is at least $5$.
Also, for any precoloring $\psi$ of $F$ there exists a precoloring $\psi_1$ of $F_1$ matching $\psi$ on $V(F)\setminus \{v_1,v_2\}$,
and $\psi$ extends to an $L$-coloring of a subgraph of $G$ if and only if $\psi_1$ extends to an $L$-coloring of the corresponding
subgraph of $G_2$, thus $G_1$ is $F_1$-critical.  Since $G$ is not exceptional and subdividing an edge of the outer face of an
exceptional graph results in an exceptional graph, $G_1$ is not exceptional.
\end{proof}

\begin{lemma}\label{lemma-d}
Let $G$ be a plane graph of girth at least $5$ with the outer face $F$ and $L$ an assignment of lists
of size three to vertices of $V(G)\setminus V(F)$.  Suppose that $G$ is a non-exceptional $F$-critical graph
that does not have properties (a) and (b) of Lemma~\ref{lemma-struct} and let $H$ be a peeling of $G$ with the outer face $B$.
Let $Q$ be a $3$-chord of $B$ and $H_1,H_2\neq Q$ be the induced subgraphs of $H$ such that $H=H_1\cup H_2$ and $H_1\cap H_2=Q$.
If $H_1,H_2\not\in E_1$, then at least one of $H_1$ and $H_2$ is not exceptional.
\end{lemma}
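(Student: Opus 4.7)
The plan is to argue by contradiction. Suppose both $H_1$ and $H_2$ are exceptional; since $H_1,H_2\not\in E_1$ by hypothesis, each $H_i$ lies in $E_2\cup E_3$. Write $Q=q_0q_1q_2q_3$, so $q_0,q_3\in V(B)$ and $q_1,q_2\not\in V(B)$. The outer face $B_i$ of $H_i$ is then bounded by $Q$ together with one of the two arcs of $B$ joining $q_0$ to $q_3$, and $V(B_i)\setminus V(B)=\{q_1,q_2\}$. Because $G$ does not satisfy (a) or (b) of Lemma~\ref{lemma-struct}, the outer face $B$ of the peeling $H$ is an induced cycle of $H$ and has no $2$-chord in $H$; these two facts are the only structural inputs I will need.

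For the case $H_i\in E_2$, the extra chord $e$ of $B_i$ has both endpoints in $V(B_i)=(V(B)\cap V(H_i))\cup\{q_1,q_2\}$. If both endpoints lie in $V(B)$, then $e$ is a chord of $B$ in $H$, contradicting that $B$ is induced. If one endpoint is $q_1$ (respectively $q_2$) and the other is some $u\in V(B)\cap V(H_i)\setminus\{q_0\}$ (respectively $\setminus\{q_3\}$), then $q_0q_1u$ (respectively $uq_2q_3$) is a $2$-chord of $B$, which is forbidden. The remaining options either coincide with an edge of $Q$ (and so are not chords of $B_i$) or produce a triangle with two edges of $Q$, violating girth $5$.

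For the case $H_i\in E_3$, there is a single interior vertex $v$ with three distinct neighbours $a,b,c\in V(B_i)$. If two of them, say $a$ and $b$, lie in $V(B)$, then $avb$ is a $2$-chord of $B$, a contradiction. Hence at least two of $\{a,b,c\}$ must lie in $\{q_1,q_2\}$, forcing $\{q_1,q_2\}\subseteq\{a,b,c\}$; but then $v,q_1,q_2$ together with the edge $q_1q_2$ form a triangle, again violating the girth hypothesis. Each alternative therefore fails, and we conclude that at least one of $H_1$, $H_2$ must be non-exceptional.

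The hard part will be making the case analysis genuinely exhaustive: for each of $H_i\in E_2$ and $H_i\in E_3$, one has to enumerate precisely how the small extra feature (a chord, or the three edges at the degree-three vertex) can intersect $\{q_1,q_2\}\cup(V(B)\cap V(H_i))$, and check in every case that the configuration produces either a chord of $B$, a $2$-chord of $B$, or a short cycle forbidden by the girth. Notably, the argument as sketched actually shows that both $H_1$ and $H_2$ are non-exceptional, a slightly stronger conclusion than the lemma states, so no quantitative refinement of the peeling or of $Q$ is required.
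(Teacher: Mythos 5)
Your proof is correct, and it takes a genuinely different route from the paper. The paper's proof is a short counting argument: since $B$ is an induced cycle with no $2$-chord, every vertex of $V(H)\setminus V(B)$ has at most one neighbour on $B$, and criticality gives it degree at least three in $H$, so $H-V(B)$ has minimum degree at least two and therefore contains a cycle of length at least $5$. Hence $|V(H)\setminus V(B)\setminus V(Q)|\ge 3$, and by pigeonhole one of $H_1,H_2$ has at least two vertices off its outer cycle, which already rules out $E_1\cup E_2\cup E_3$. Your proof instead does a direct structural case analysis, showing that the ``small extra feature'' of an $E_2$ or $E_3$ graph (the unique chord, or the three edges at the unique interior vertex) would in every case create a chord of $B$, a $2$-chord of $B$, or a triangle, each of which is excluded. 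Two remarks on the comparison. First, you are right that your argument actually proves both $H_1$ and $H_2$ lie outside $E_2\cup E_3$, which combined with the hypothesis $H_1,H_2\notin E_1$ gives both non-exceptional --- slightly stronger than what is stated, and stronger than what the counting argument directly produces (which only guarantees one side has two interior vertices). Second, the paper's counting argument is shorter, avoids the case enumeration entirely, and --- perhaps worth noting --- does not even use the hypothesis $H_1,H_2\notin E_1$, since having two interior vertices already excludes $E_1$ as well. Both proofs use the same two structural inputs from the failure of (a) and (b) (no chords and no $2$-chords of $B$); you add the girth condition explicitly to kill the triangle cases, while the paper folds the girth into the length of the cycle found inside $H-V(B)$.
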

\begin{proof}
Since $G$ does not have the properties (a) and (b), $H-V(B)$ is not a tree, thus $|V(H)\setminus V(B)|\ge 5$ and
$|V(H)\setminus V(B)\setminus V(Q)|\ge 3$.  This implies that at least one of $H_1$ and $H_2$ has at least two
vertices not incident with its outer face, and thus it is not exceptional.
\end{proof}

\begin{lemma}\label{lemma-e}
Let $G$ be a plane graph of girth at least $5$ with the outer face $F$ and $L$ an assignment of lists
of size three to vertices of $V(G)\setminus V(F)$.  Suppose that $G$ is a non-exceptional $F$-critical graph
that does not have properties (a) and (b) of Lemma~\ref{lemma-struct} and let $H$ be a peeling of $G$ with the outer face $B$.
Let $Q=v_0v_1v_2v_3v_4$ be a $4$-chord of $B$ and $H_1,H_2\neq Q$ be induced subgraphs of $H$ such that $H=H_1\cup H_2$ and $H_1\cap H_2=Q$.
Suppose that for $i\in\{1,2\}$, $H_i\not\in E_1$ and if $H_i\in E_2$, then $v_2$ has degree two in $H_i$.
Then $H_1,H_2\not\in E_2$, and at least one of $H_1$ and $H_2$ is not exceptional.
\end{lemma}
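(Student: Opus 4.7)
The plan is to verify the two conclusions separately. First I would rule out $H_i\in E_2$ by a case analysis on the location of the chord in $H_i$, using that $G$ lacks properties~(a) and~(b) of Lemma~\ref{lemma-struct}. Then I would rule out both $H_i\in E_3$ by a short double-counting argument that compares an upper bound coming from the girth with a lower bound coming from criticality of $G$.

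For the first claim, suppose $H_i\in E_2$. Then $H_i$ equals its outer cycle $B_i=Q\cup P_i$ (with $P_i\subseteq B$) together with a single chord $e$, and the hypothesis on $v_2$ forces $e$ not to be incident with $v_2$. I would consider the possible locations of the endpoints of $e$ inside $V(B_i)=V(P_i)\cup\{v_1,v_2,v_3\}$. If both endpoints lie in $V(P_i)$, then $e$ is a chord of $B$, contradicting the failure of~(a). If one endpoint lies in $V(P_i)$ and the other in $\{v_1,v_3\}$---say the latter is $v_1$ and the former is $u$---then $u\ne v_0$ (otherwise $e=v_0v_1\in E(Q)$), so $v_0v_1u$ is a $2$-chord of $B$, contradicting the failure of~(b); the case of $v_3$ is symmetric. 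The only remaining option $e=v_1v_3$ creates the triangle $v_1v_2v_3$ and violates girth~$5$.

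For the second claim, assume $H_1,H_2\in E_3$ and let $w_i\in V(H_i)\setminus V(B_i)$ be the unique interior vertex of $H_i$; by the definition of $E_3$, $w_i$ has degree three and three neighbors on the induced cycle $B_i$. Since the girth of $G$ is at least $5$, any two neighbors of $w_i$ on $B_i$ must be at $B_i$-distance at least $3$, as otherwise they close a cycle of length $3$ or $4$ through $w_i$. Because $v_1,v_2,v_3$ are three consecutive vertices of $B_i$, any two of them are at $B_i$-distance at most $2$, so $w_i$ has at most one neighbor in $\{v_1,v_2,v_3\}$. Summing over $i\in\{1,2\}$ gives at most $2$ edges between $\{v_1,v_2,v_3\}$ and $\{w_1,w_2\}$.

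For the matching lower bound, each $v_j$ lies in $V(G)\setminus V(F)$ with $|L(v_j)|=3$, so by criticality $\deg_G(v_j)\ge 3$; peeling preserves the degree of such interior vertices (the removed jump base vertices are adjacent only to $F$-vertices and to body vertices), and hence $\deg_H(v_j)\ge 3$. The two $Q$-neighbors contribute exactly $2$ to $\deg_H(v_j)$, and any further neighbor must lie in $V(H)\setminus V(B)\setminus V(Q)=\{w_1,w_2\}$: neighbors of $v_j$ in $V(B)\setminus V(Q)$ are ruled out by the combination of girth~$5$ and failure of~(b), and non-$Q$ neighbors of $v_j$ in $V(B_i)$ are ruled out by the induced-ness of $B_i$. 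So each $v_j$ is adjacent to at least one of $w_1,w_2$, giving at least $3$ such edges and contradicting the upper bound of $2$. The main subtlety is in the first part: the hypothesis that $v_2$ has degree two in $H_i$ whenever $H_i\in E_2$ is precisely what eliminates chords through $v_2$ (such as $v_0v_2$ or $v_2$ to a vertex of $V(P_i)$), which would not otherwise be directly forbidden by~(a), (b), or girth.
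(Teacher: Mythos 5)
Your proof is correct and takes essentially the same approach as the paper: for the $E_2$ part, a case analysis on the chord's endpoints using failure of (a), (b), and girth; for the $E_3$ part, the observation that girth $5$ forces each interior vertex $w_i$ to have at most one neighbor among $\{v_1,v_2,v_3\}$, combined with the degree requirement on $v_1,v_2,v_3$. The paper phrases the second half as a pigeonhole on which of $v_1,v_2,v_3$ have degree two in each $H_i$ (at least two in each, so one is degree two in both, hence in $H$ and in $G$), while you count edges between $\{v_1,v_2,v_3\}$ and $\{w_1,w_2\}$; these are the same argument. Your handling of the $e=v_1v_3$ subcase and your explicit check that peeling preserves degrees of vertices in $V(H)\setminus V(B)$ are small details the paper leaves implicit, and they are correct.
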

\begin{proof}
Suppose that say $H_1\in E_2$, and let $B_1$ be the outer face of $H_1$.
Since the chord of $B_1$ is not incident with $v_2$, $B$ has either a chord or a $2$-chord,
contradicting the assumption that $G$ does not have properties (a) and (b).

Since the girth of $G$ is at least $5$, if $H_1\in E_3$, then at least two of $v_1$, $v_2$ and $v_3$ have
degree two in $H_1$.  Symmetrically, if $H_2\in E_3$, then at least two of $v_1$, $v_2$ and $v_3$ have
degree two in $H_2$.  Therefore, if $H_1,H_2\in E_3$, then at least one of $v_1$, $v_2$ and $v_3$ has
degree two in $G$, which is a contradiction.  It follows that at most one of $H_1$ and $H_2$ is exceptional.
\end{proof}

We are now ready to prove the main theorem of this section.

\begin{proof}[Proof of Theorem~\ref{thm-fincrit}]
We proceed by induction on $\ell(F)$ and the number of edges of $G$.  If $\ell(F)\le 11$, then the claim follows from
Lemma~\ref{lemma-cycles}.  Suppose that $\ell(F)\ge 12$ and Theorem~\ref{thm-fincrit} holds for
all graphs with the outer face of length at most $\ell(F)-1$, as well all graphs with outer the face
of length $\ell(F)$ and fewer edges than $G$.

The graph $G$ satisfies at least one of the conclusions of Lemma~\ref{lemma-struct}.
If $G$ has the property (a), then let $H_1,H_2\subseteq G$ be the subgraphs of $G$ as in
Lemma~\ref{lemma-a}, with outer faces $B_1$ and $B_2$.  Note that $\ell(B_1)+\ell(B_2)\le \ell(F)+2$,
and thus $\ell(B_1),\ell(B_2)\le \ell(F)-3$.  By Lemma~\ref{lemma-crs}, $H_i$ is $B_i$-critical
for $i\in\{1,2\}$.  If $\{H_1,H_2\}\not\subseteq E_1$, then by symmetry assume that $H_1\not\in E_1$.
By the induction hypothesis, $w(H_1)\le w(\ell(B_1)-3)+w(5)$,
and thus $w(G)\le w(H)+4w(5)=w(H_1)+w(H_2)+4w(5)\le w(\ell(B_1)-3)+w(\ell(B_2))+5w(5)$.  Note that $\ell(B_1)\ge 8$ and
$\ell(B_2)\ge 5$, thus by (\ref{cl-msum}),
$w(\ell(B_1)-3)+w(\ell(B_2))\le w(\ell(B_1)+\ell(B_2)-8)+w(5)\le w(\ell(F)-6)+w(5)\le w(\ell(F)-5)$.
We conclude that $w(G)\le w(\ell(F)-5)+5w(5)$.

On the other hand, if $H_1, H_2\in E_1$, then using Lemma~\ref{lemma-a}, we obtain
$w(G)=w(H_1)+w(H_2)+4w(5)=w(\ell(B_1))+w(\ell(B_2))+4w(5)\le
w(\ell(B_1)+\ell(B_2)-5)+5w(5)=w(\ell(F)-5)+5w(5)$.
Therefore, we may assume that $G$ does not have the property (a), that is, any peeling of $G$
is bounded by an induced cycle.

Suppose that $G$ has the property (b).  Let $Q$, $H_1$ and $H_2$ be the subgraphs of $G$ as in
Lemma~\ref{lemma-b}, and let $B_1$ and $B_2$ be the outer faces of $H_1$ and $H_2$, respectively.
Note that $\ell(B_1)+\ell(B_2)=\ell(F)+4$, and since the girth of $G$ is at least $5$, it follows that
$\ell(B_1),\ell(B_2)<\ell(F)$.  If $\{H_1,H_2\}\cap E_1\neq \emptyset$, then by symmetry assume that
$H_1\in E_1$.  By Lemma~\ref{lemma-b}, $H_2\not\in E_1\cup E_2$.  By the induction hypothesis
and Lemma~\ref{lemma-b}, if $H_2\in E_3$, then $w(G)\le w(H_1)+w(H_2)+2w(5)\le w(\ell(B_1))+w(\ell(B_2)-4)+4w(5)$.
If $H_2\not\in E_3$, then $w(G)\le w(H_1)+w(H_2)+4w(5)\le w(\ell(B_1))+w(\ell(B_2)-5)+9w(5)\le w(\ell(B_1))+w(\ell(B_2)-4)+4w(5)$.
By (\ref{cl-msum}), $w(\ell(B_1))+w(\ell(B_2)-4)\le w(\ell(B_1)+\ell(B_2)-9)+w(5)=w(\ell(F)-5)+w(5)$,
and thus $w(G)\le w(\ell(F)-5)+5w(5)$.

On the other hand, it $H_1,H_2\not\in E_1$, then
$w(G)\le w(H_1)+w(H_2)+4w(5)\le w(\ell(B_1)-3)+w(\ell(B_2)-3)+6w(5)\le w(\ell(B_1)+\ell(B_2)-11)+7w(5)=w(\ell(F)-7)+7w(5)\le w(\ell(F)-5)+5w(5)$.
Therefore, we may assume that $G$ does not have the property (b).

Suppose that $v_1$ and $v_2$ are adjacent vertices of degree two in $G$, and let $G_1$ and $F_1$ be as in Lemma~\ref{lemma-c}.  Let $f\neq F$ be
the face of $G$ incident with $v_1v_2$.  Then $\ell(f)\ge 6$ and $w(G)=w(G_1)+w(\ell(f))-w(\ell(f)-1)$.  By the induction hypothesis,
$w(G_1)\le w(\ell(F_1)-5)+5w(5)=w(\ell(F)-6)+5w(5)$.  This implies that $\ell(f)-1 < \ell(F_1)=\ell(F)-1$.
We conclude that $w(G)\le w(\ell(F)-6)+w(\ell(f))-w(\ell(f)-1)+5w(5)\le w(\ell(F)-5)+5w(5)$.
Therefore, assume that $G$ does not have the property (c).

Suppose that $G$ has the property (d).  Let $Q$, $H_1$ and $H_2$ be the subgraphs of $G$ as in
Lemma~\ref{lemma-d}, and let $B_1$ and $B_2$ be the outer faces of $H_1$ and $H_2$, respectively.
Note that $\ell(B_1)+\ell(B_2)=\ell(F)+6$.  Since $H_1,H_2\not\in E_1$, we have $\ell(B_1),\ell(B_2)\ge 8$,
and thus $\ell(B_1),\ell(B_2)<\ell(F)$.  By Lemma~\ref{lemma-d}, we may assume that $H_2$ is not exceptional,
and thus $\ell(B_2)\ge 10$.  By the induction hypothesis,
$w(G)\le w(H_1)+w(H_2)+4w(5)\le w(\ell(B_1)-3)+w(\ell(B_2)-5)+10w(5)$.
By (\ref{cl-msum}), $w(\ell(B_1)-3)+w(\ell(B_2)-5)\le w(\ell(B_1)+\ell(B_2)-13)+w(5)$,
and thus $w(G)\le w(\ell(F)-7)+11w(5)\le w(\ell(F)-5)+5w(5)$.  Therefore, assume that $G$ does not have
the property (d).

Suppose that $G$ has the property (e).  Let $Q$, $H_1$ and $H_2$ be the subgraphs of $G$ as in
Lemma~\ref{lemma-e}, and let $B_1$ and $B_2$ be the outer faces of $H_1$ and $H_2$, respectively.
By Lemma~\ref{lemma-e} and symmetry, we assume that $H_1\not\in E_1\cup E_2$
and $H_2$ is not exceptional, and thus $\ell(B_1)\ge 9$ and $\ell(B_2)\ge 10$.
Note that $\ell(B_1)+\ell(B_2)=\ell(F)+8$, hence $\ell(B_1),\ell(B_2)<\ell(F)$.
By the induction hypothesis, $w(G)\le w(H_1)+w(H_2)+4w(5)\le w(\ell(B_1)-4)+w(\ell(B_2)-5)+11w(5)$.
By (\ref{cl-msum}), $w(\ell(B_1)-4)+w(\ell(B_2)-5)\le w(\ell(B_1)+\ell(B_2)-14)+w(5)$,
and thus $w(G)\le w(\ell(F)-6)+11w(5)\le w(\ell(F)-5)+5w(5)$.  It follows that we can assume that
$G$ does not have the property (e).

Suppose that $G$ has the property (f).  Since $G$ does not have the properties (a-d),
there exists a path $v_0v_1\ldots v_6\subseteq F$ such that $u$ is adjacent to $v_0$,
$v$ to $v_2$, $w$ to $v_4$ and $x$ to $v_6$, and the closed disk bounded by
$v_0v_1\ldots v_6xwvu$ consists of three $5$-faces of $G$.  Let $G'=G-\{v_1,v_2,\ldots, v_5\}$
and let $F'$ be the outer face of $G'$.  Observe that $G'$ is $F'$-critical, $\ell(F')=\ell(F)-1$
and $w(G)=w(G')+3w(5)$.  Since $u$ and $x$ have degree at least three in $G$, they have degree
at least three in $G'$.  Also, $u$ is not adjacent to $x$, since the girth of $G$ is at least $5$,
thus $G'\not\in E_1\cup E_2$.  By the induction hypothesis,
$w(G')\le w(\ell(F')-4)+2w(5)=w(\ell(F)-5)+2w(5)$.  We conclude that $w(G)\le w(\ell(F)-5)+5w(5)$.
Therefore, assume that $G$ does not have the property (f).

Let us now prove the following claim:
\claim{cl-4chordcol}{Let $H$ be a peeling of $G$ with the outer face $B$, and $\psi$ a precoloring of $B$ that does not extend to an $L$-coloring of $H$.
Let $Q=v_0v_1v_2v_3v_4$ be a $4$-chord of $B$ such that a cycle $C\neq B$ in $B\cup Q$ bounds a face of $G$.
Then, $L(v_1)\subseteq L(v_2)\cup \{\psi(v_0)\}$.}
\begin{proof}
Suppose for a contradiction that there exists a color $c\in L(v_1)\setminus (L(v_2)\cup \{\psi(v_0)\})$.
Let $d$ be a new color that does not appear in the lists of any of the vertices of $V(H)\setminus V(B)$.
Let $N_1\subseteq V(H)\setminus V(B)$ be the set of vertices that are adjacent to $v_1$
and $N_2\subseteq V(H)\setminus V(B)$ be the set of vertices that are adjacent to $v_4$.
Note that $N_1$ and $N_4$ are disjoint, since $v_1$ and $v_4$ do not have a common neighbor other than $v_0$.

If $v_0$ is adjacent to $v_4$, then let $H_1=H-v_0v_4$, otherwise let $H_1=H-(V(C)\setminus V(Q))$.
Let $H_2$ be the graph obtained from $H_1$ by identifying $v_1$ with $v_4$ to a new vertex $v$,
and let $H_3=H_2-vv_2$.  Let $B'$ be the outer face of $H_2$.
Let $L'$ be the list assignment obtained from $L$ by replacing the color $c$ in the lists of vertices of $N_1$
and the color $\psi(v_4)$ in the lists of vertices of $N_2$ by $d$.
Let $H'$ be a $B'$-skeleton of $H_3$ with respect to the list assignment $L'$.
Note that $\ell(B')=\ell(F)+5-\ell(C)\le\ell(F)$ and $|E(H')|<|E(G)|$.

Suppose that $H'$ contains a cycle $K'$ of length at most $4$.  Note that $v\in V(K')$ and $K'$ corresponds
to a path $P$ of length $\ell(K')$ between $v_1$ and $v_4$ in $H$ such that $v_1v_2\not\in E(P)$.
Since the girth of $G$ is at least $5$, the shortest path in between $v_1$ and $v_2$ in $H-v_1v_2$
has length at least $4$, thus $v_2\not\in V(P)$.  It follows that $P\cup v_1v_2v_3v_4$ contains
a cycle $K$ of length at most $7$ and $v_1v_2v_3$ is a subpath of $K$.  By (\ref{cl-face}), such a cycle
bounds a face, implying that $v_2$ has degree two.  This is a contradiction, thus $H'$ has girth at
least $5$.  Similarly, we conclude that $\ell(f_0)\ge 6$.

Let $\psi'$ be the precoloring of $B'$ that matches $\psi$ on $V(B')\setminus\{v\}$ and $\psi'(v)=d$.
Suppose that $\psi'$ extends to an $L'$-coloring of $H'$, and thus also to an $L'$-coloring $\varphi'$ of
$H_3$.  The color $c$ was chosen so that $c\not\in L(v_2)$, and thus $d\not\in L'(v_2)$.  It follows that no vertex of $H_3$ except for $v$ is colored by $d$.
Also, no vertex of $N_1\cup \{v_0\}$ is colored by $c$ and no vertex of $N_2$ is colored by $\psi(v_4)$.
Therefore, the coloring $\varphi$ given by $\varphi(v_1)=c$ and $\varphi(w)=\varphi'(w)$ for $w\in V(H)\setminus (V(B)\cup \{v_1\})$
is an $L$-coloring of $G$ extending $\psi$, which is a contradiction.  We conclude that $\psi'$ does not
extend to an $L'$-coloring of $H'$, and thus $H'\not\in E_1$.  Since $G$ does not have properties (a) and (b),
$B'$ does not have a chord and no vertex of $H'$ has more than two neighbors in $B'$, thus $H'\not\in E_2\cup E_3$,
and $H'$ is not exceptional and $\ell(B')\ge 10$.

As $H'$ has fewer edges than $G$, by the induction hypothesis we get $w(H')\le w(\ell(B')-5)+5w(5)$.
Therefore, every face $f\in \FF(H')$ has length at most $\ell(f)\le \ell(B')-5=\ell(F)-\ell(C)<\ell(F)$.

Consider $H'$ as a subgraph of $H_2$.  Let $f_0$ be the face of $H'$
such that the edge $vv_2$ of $H_2$ is drawn in the open disk bounded by $f_0$, and let $K_0$ be the cycle in $H$ obtained
from $f_0$ by replacing $v$ by the path $C-\{v_2,v_3\}$.  For a cycle $K\subseteq H$, let $H(K)$ be the subgraph of $H$ drawn in the closed disk bounded by $K$.
Note that $w(H)=w(H(K_0))+\sum_{f\in \FF(H')\setminus\{f_0\}} w(H(f))$.  For each face $f\in \FF(H')\setminus \{f_0\}$, the induction
hypothesis implies $w(H(f))\le w(\ell(f))$.  As $v_1v_2\in E(H(K_0))$, we have $H(K_0)\not\in E_1$.
Since $\ell(K_0)=\ell(f_0)+\ell(C)-3\le \ell(F)-3<\ell(F)$, by the induction hypothesis we have
$w(H(K_0))\le w(\ell(f_0)+\ell(C)-6)+w(5)\le w(\ell(f_0))+\ell(C)-6+2w(5)$.
Therefore, \begin{eqnarray*}
w(H)&=&w(H(K_0))+\sum_{f\in \FF(H')\setminus\{f_0\}} w(H(f))\\
&\le&\left(w(\ell(f_0)) + \sum_{f\in \FF(H')\setminus\{f_0\}} w(\ell(f))\right)+\ell(C)-6+2w(5)\\
&=&w(H')+\ell(C)-6+2w(5)\\
&\le& w(\ell(B')-5)+\ell(C)-6+7w(5)\\
&=&w(\ell(F)-\ell(C))+\ell(C)-6+7w(5)\\
&\le& w(\ell(F)-5)+8w(5)-1.
\end{eqnarray*}
It follows that $w(G)\le w(\ell(F)-5)+12w(5)-1\le w(\ell(F)-5)+5w(5)$, which is a contradiction.
\end{proof}

Since $G$ does not have properties (a-f), we conclude that $G$ has properties (g) or (h), i.e.,
there exists a peeling $H$ of $G$ with the outer face $B$, and
a $4$-chord $Q=v_0v_1v_2v_3v_4$ of $B$ such that a cycle $C\neq B$ in $B\cup Q$ bounds a face $C$, and $H$ contains either
\begin{itemize}
\item a jump with base $w_1w_2w_3w_4v_0$ and body $w_1x_1x_2v_1v_0$, or
\item two $5$-faces $w_1w_2v_0v_1x_1$ and $w_3w_4v_4v_3x_2$, with $w_1w_2v_0, w_3w_4v_4\subseteq B$.
\end{itemize}
See Figure~\ref{fig-last} for an illustration.
In the former case, let $R=\{w_2,w_3,w_4,v_0\}$.  In the latter case, let $R=\{w_2,w_4,v_0,v_4\}$.
\begin{figure}
\center{\epsfbox{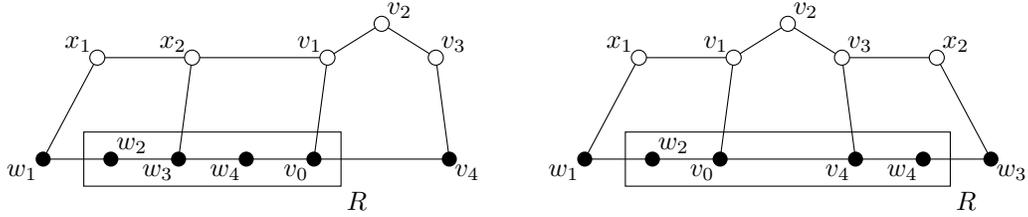}}
\caption{The configurations in properties (g) and (h).}
\label{fig-last}
\end{figure}

As $H\neq B$ is $B$-critical, there exists a precoloring $\psi$ of $B$ that does not extend to an $L$-coloring of $H$.
Let $X_1=L(v_1)\setminus \{\psi(v_0)\}$ and $X_3=L(v_3)\setminus\{\psi(v_4)\}$.  By (\ref{cl-4chordcol}),
$X_1\cup X_3\subseteq L(v_2)$, and since $|X_1|,|X_3|\ge 2$ and $|L(v_2)|=3$, there exists a color $c\in X_1\cap X_3$.
Let $H_1=G-(V(C)\setminus V(Q))-R$ and let $H_2$ be the graph obtained from $H_1$ by identifying $v_1$ with $v_3$ to a new vertex $v$.
Let $B'$ be the outer face of $H_2$.  Let $H'$ be a $B'$-skeleton of $H_2$, with respect to the restriction of $L$ to $V(H_2)\setminus V(B')$.
Note that $\ell(B')=\ell(F)-\ell(C)+4<\ell(F)$.

Suppose that $H'$ contains a cycle $K$ of length at most $4$.  Then $v\in V(K)$ and $H_1$ contains
a path $P$ of length $\ell(K)$ between $v_1$ and $v_3$.  Note that $v_2\not\in V(P)$, since the girth of $G$ is at least $5$.
Therefore, $P\cup v_1v_2v_3$ is a cycle of length at most $6$, and by (\ref{cl-face}), it bounds a face.  It follows that
$v_2$ has degree two, which is a contradiction.  It follows that $H'$ has girth at least $5$.

Let $\psi'$ be the precoloring of $B'$ that matches $\psi$ on $V(F)\cap V(B')$, with $\psi'(v)=c$
and the colors of $x_1\in L(x_1)$ and $x_2\in L(x_2)$ chosen so that $\psi'$ is a proper coloring of $B'$.
Suppose that $\psi'$ extends to an $L$-coloring of $H'$, and thus also to an $L$-coloring $\varphi'$ of $H_2$.
Setting $\varphi(v_1)=\varphi(v_3)=c$ and $\varphi(z)=\varphi'(z)$ for $z\in V(H)\setminus(V(B)\cup \{v_1,v_3\})$,
we obtain an $L$-coloring of $H$ extending $\psi$, which is a contradiction.  Therefore, $\psi'$ does not
extend to $G'$, and $G'\not\in E_1$.  Furthermore, since $G$ does not have properties (a), (b) and (d),
$B'$ has no chords and no vertex of $H'$ has more than two neighbors in $B'$, hence $H'$ is not exceptional
and $\ell(B')\ge 10$.

By the induction hypothesis, $w(H')\le w(\ell(B')-5)+5w(5)=w(\ell(F)-\ell(C)-1)+5w(5)$.
It follows that each face of $H'$ has length at most $\ell(B')-5=\ell(F)-\ell(C)-1<\ell(F)$.

Consider $H'$ as the subgraph of $H_2$.  If $vv_2\not\in E(H')$, then let $f_0$ be the face of $H'$ such that the closed
disk bounded by $f_0$ contains the edge $vv_2$.  Let $K_0\subseteq H$ be the cycle obtained from $f_0$ by replacing $v$ by the path $C-v_2$.
Since $v_1v_2,v_3v_2\in E(H(K_0))$, it follows that $H(K_0)\not\in E_0\cup E_1$ and $\ell(K_0)\ge 9$.
Also, $\ell(K_0)=\ell(f_0)+\ell(C)-2\le \ell(F)-3$.  By the induction hypothesis,
$w(H(K_0))\le w(\ell(K_0)-4)+2w(5)=w(\ell(f_0)+\ell(C)-6)+2w(5)\le w(\ell(f_0))+\ell(C)-6+3w(5)$.
Also, for each $f\in \FF(H')\setminus \{f_0\}$, $w(H(f))\le w(\ell(f))$.
If $vv_2\in E(H')$, then we let $K_0=C$ and $w(H(K_0)))=w(C)$.  Note that in addition to the faces
contained in the graphs $H(f)$ for $f\in \FF(H')\setminus \{f_0\}$ and in $H(K_0)$, $H$ has two more $5$-faces.
Since $\ell(C)-6+3w(5) < w(C)$, we conclude that $w(H)\le w(H')+w(C)+2w(5)\le w(\ell(F)-\ell(C)-1)+w(C)+7w(5)$.

If $\ell(F)-\ell(C)=6$, then, since $\ell(F)\ge 12$, we have $\ell(C)\ge 6$ and
$w(\ell(F)-\ell(C)-1)+w(C)=(\ell(F)-\ell(C)-6+w(5))+(\ell(C)-5)=\ell(F)-11+w(5)=w(\ell(F)-5)+w(5)-1$.
If $\ell(F)-\ell(C)>6$, then $w(\ell(F)-\ell(C)-1)+w(C)\le (\ell(F)-\ell(C)-6))+(\ell(C)-5+w(5))=w(\ell(F)-5)+w(5)-1$.
Therefore, $w(H)\le w(\ell(F)-5)+8w(5)-1$ and $w(G)\le w(\ell(F)-5)+12w(5)-1\le w(\ell(F)-5)+5w(5)$.
\end{proof}

Theorem~\ref{thm-fincrit} implies that the number of vertices of a $F$-critical plane graph of girth at least $5$
is linear in $\ell(F)$:

\begin{proof}[Proof of Theorem~\ref{thm-numvert}]
If $G$ is exceptional, then $|E(G)|\le \ell(F)+3 < 18\ell(F)-160$,
and $|V(G)|\le \ell(F)+1 < \frac{37\ell(F)-320}{3}$, since $\ell(F)\ge 10$.
Therefore, assume that $G$ is not exceptional.

For each $x\ge 5$, we have $w(x)\ge w(5)x/5$.
By Theorem~\ref{thm-fincrit},
\begin{eqnarray*}
2w(5) |E(G)|/5&=&w(5) \ell(F)/5 + \sum_{f\in \FF(G)} w(5)\ell(f)/5\\
&\le& w(5) \ell(F)/5 + \sum_{f\in\FF(G)}w(\ell(f))\\
&=& w(5) \ell(F)/5 + w(G)\\
&\le& w(5) \ell(F)/5 + w(\ell(F)-5)+5w(5)\\
&\le& (1+w(5)/5)\ell(F)-10+6w(5).  
\end{eqnarray*}
Therefore, $|E(G)|\le (1+5/w(5))\ell(F)/2-25/w(5)+15=18\ell(F)-160$.

As the minimum degree of $G$ is at least $2$ and all vertices except for those in
$f$ have degree at least three, we get
$3|V(G)|-\ell(F)\le 2|E(G)|\le 36\ell(F)-320$, and hence $|V(G)|\le \frac{37\ell(F)-320}{3}$.
\end{proof}

\section{Concluding remarks}
The bound on $|V(G)|$ in Theorem~\ref{thm-numvert} can be improved by $\ell(F)/6$ by
a slightly more involved argument, first eliminating $\le 2$-chords and edges joining vertices of degree two.
However, the bound seems to be far from the correct one for large values of $\ell(F)$.

As the number of vertices of an $F$-critical graph is linear in $\ell(F)$, the number of such graphs
is at most exponential in $\ell(F)$ (Denise et al.~\cite{numplan}).  On the other hand, every tree with $k$
leaves and all internal vertices of degree three gives rise to an $F$-critical graph with $\ell(F)=3k$, thus
the number of $F$-critical graphs is exponential in $\ell(F)$.

The proof of Theorem~\ref{thm-fincrit} can be converted to an algorithm to generate the critical graphs
in the straightforward way---each critical graph $G$ contains a configuration described by Lemma~\ref{lemma-struct},
and this configuration can be used to derive $G$ from smaller critical graphs.   This algorithm could be practical
for small values of $\ell(F)$, say $\ell(F)<20$.

A slightly unsatisfactory part of the proof of Theorem~\ref{thm-fincrit} concerns dealing with the cases (g) and (h)
of Lemma~\ref{lemma-struct}, where the reduced graph $H'$ is not a subgraph of $G$ drawn inside a cycle of $G$.
It would be more appealing to have a proof that avoids such non-trivial reductions, giving a better understanding of the structure
of the critical graphs, as well as a faster algorithm to generate them.

\section*{Acknowledgements}

We would like to thank Bernard Lidick\'y for his help with improving the presentation
of this paper.

\end{document}